\newtheorem{ass}{Assumption}
\newtheorem{thm}{Theorem}
\newtheorem{rmk}{Remark}
\newtheorem{problem}{Problem}
\newtheorem{lem}{Lemma}
\newcommand{\f}[1]{\mathbf{#1}}
\newcommand{\norm}[1]{\left\lVert#1\right\rVert}
\newcommand{\jump}[1]{\left\llbracket#1\right\rrbracket}
\newcommand{\average}[1]{\left\{#1\right\}}
\journal{}
\begin{document}
\begin{frontmatter}

\title{An approximate $C^1$ multi-patch space for isogeometric analysis with a comparison to Nitsche's method} 

\author[JKU]{Pascal Weinm\"uller}
\ead{pascal.weinmueller@jku.at}
\author[JKU,RICAM]{Thomas Takacs}
\ead{thomas.takacs@ricam.oeaw.ac.at}
\address[JKU]{Institute of Applied Geometry, Johannes Kepler University Linz, Altenberger Str. 69, 4040 Linz, Austria}
\address[RICAM]{Johann Radon Institute for Computational and Applied Mathematics, Austrian Academy of Sciences, Altenberger Str. 69, 4040 Linz, Austria}

\begin{abstract}
We present an approximately $C^1$-smooth multi-patch spline construction which can be used in isogeometric analysis (IGA). The construction extends the one presented in~\cite{WEINMULLER2021114017} for two-patch domains. 
A key property of IGA is that it is simple to achieve high order smoothness within a single patch. However, to represent more complex geometries one often uses a multi-patch construction. In this case, the global continuity for the basis functions is in general only $C^0$. Therefore, to obtain $C^1$-smooth isogeometric functions, a special construction for the basis is needed. Such spaces are of interest when solving numerically fourth-order problems, such as the biharmonic equation or Kirchhoff-Love plate/shell formulations, using an isogeometric Galerkin method.

Isogeometric spaces that are globally $C^1$ over multi-patch domains can be constructed as in~\cite{collin2016analysis, kapl2017dimension, kapl2018construction, kapl2019isogeometric, kapl2019argyris}. The constructions require geometry parametrizations that satisfy certain constraints along the interfaces, so-called analysis-suitable $G^1$ parametrizations. To allow $C^1$ spaces over more general multi-patch parametrizations, one needs to increase the polynomial degree and/or to relax the $C^1$ conditions. Thus, we define function spaces that are not exactly $C^1$ but only approximately. We adopt the construction for two-patch domains, as developed in~\cite{WEINMULLER2021114017}, and extend it to more general multi-patch domains.

We employ the construction for a biharmonic model problem and compare the results with Nitsche's method. We compare both methods over complex multi-patch domains with non-trivial interfaces. The numerical tests indicate that the proposed construction converges optimally under $h$-refinement, comparable to the solution using Nitsche's method. In contrast to weakly imposing coupling conditions, the approximate $C^1$ construction is explicit and no additional terms need to be introduced to stabilize the method/penalize the jump of the derivative at the interface. Thus, the new proposed method can be used more easily as no parameters need to be estimated.
\end{abstract}

\begin{keyword}
fourth order partial differential equation \sep biharmonic equation \sep geometric continuity \sep $C^1$ continuity \sep approximate $C^1$ continuity \sep Nitsche's method 
\end{keyword}
\end{frontmatter}

\section{Introduction}

Computer Aided Design (CAD) is used to create digital models of geometric objects. In a CAD model the object is usually described with the help of two-dimensional curves as well as three-dimensional surfaces and volumes. These free-form curves and surfaces/volumes can be described by means of splines, i.e., piecewise polynomial functions. Such CAD models can be used in many applications, e.g., for simulations based on the Finite Element Method (FEM) or Isogeometric Analysis (IGA), which is considered in this paper. IGA, as introduced in \cite{hughes2005isogeometric}, uses the same spline functions that are used to construct the geometry also for the discretization spaces for the computation of numerical simulations. One advantage of IGA over classical higher-order FEM is that it provides basis functions with high smoothness and high polynomial degree, making it ideal for solving high order partial differential equations (PDEs) over single patches. In most applications, however, the geometries can usually not be described with one patch. Multi-patch domains composed of a collection of several patches, or related concepts, are needed.

In this paper, we restrict ourselves to planar multi-patch domains where the patch parametrizations are matching along the interfaces. While constructing $C^0$ smooth basis functions is quite straightforward, see, e.g.,~\cite{cottrell2009isogeometric, scott2014isogeometric}, imposing higher smoothness in a multi-patch setting is non-trivial. This makes solving higher order equations a more challenging task. In the following, we focus on fourth order problems, such as the biharmonic equation or a Kirchhoff-Love plate or shell formulation.

Two basic ways to get around this problem are to impose the $C^1$ smoothness weakly or strongly, that is, to adjust the variational problem or to construct special basis functions with higher smoothness, respectively. Following the first approach, one way to solve fourth order equations while keeping discontinuous, patch-wise basis functions is to employ a discontinuous Galerkin (dG) discretization as studied in~\cite{moore2018discontinuous,moore2020multipatch}. dG methods approximate the solution with patch-wise defined functions, which are discontinuous across patch interfaces. As a consequence, the variational formulation contains additional integral terms. See also~\cite{nguyen2014nitsche, apostolatos2014nitsche, guo2015nitsche}. In this paper we use a Nitsche formulation for a $C^0$ smooth multi-patch discretization. In that case, a stability term, penalizing the jump of the normal derivative, is added to ensure the coercivity of the bilinear form. Hence, the corresponding stability parameter must be chosen sufficiently large. On the other hand, a too large stability parameter penalizes the jump of the normal derivative too much, which leads to locking of the solution. Therefore, the optimal range for the stability parameter must be determined, which we examine in more detail in this paper.

Another option to circumvent $C^1$ smoothness is to use a mixed/hybrid formulation as in~\cite{zulehner2015ciarlet, rafetseder2018decomposition, rafetseder2019new, pauly2020divdiv}. Here the PDE is reformulated in such a way, by introducing an extra field, that the resulting mixed formulation is of lower order. The obtained formulation has a saddle-point structure. To solve this problem efficiently, one needs a suitable preconditioner. Furthermore, it has a larger system to solve in comparison to the original problem.

At last, $C^1$ smoothness can be enforced weakly by using the mortar method, see, e.g., \cite{brivadis2015isogeometric,horger2019hybrid} for $C^0$-coupling, where the coupling constraints are enforced using Lagrange multipliers, also resulting in a saddle-point problem. In addition to the challenges of the saddle-point structure, there is the difficulty of finding a suitable discrete Lagrange multiplier space such that the resulting formulation is stable.

There are several different strategies when enforcing smoothness over multi-patch domains by strong $C^1$ coupling, i.e., constructing $C^1$ bases along the interfaces. One first attempt is the so-called bending strip method, see~\cite{kiendl2009isogeometric, kiendl2010bending}. More general formulations imposing geometric continuity over multi-patch domains are later developed in~\cite{GrPe15,KaViJu15,collin2016analysis,kapl2017isogeometric}, while the related approaches presented in~\cite{nguyen2014comparative,mourrain_dimension_2016,karvciauskas2016generalizing} follow a more local construction of geometrically continuous splines. See also~\cite{hna2021} for a summary of related approaches. A significant problem of all strong $C^1$ coupling methods are the limitations they pose on the underlying geometries. It was shown in~\cite{collin2016analysis} that a standard isogeometric multi-patch discretization possesses optimal approximation properties only if the parametrization of the domain is a so-called analysis-suitable $G^1$ multi-patch parametrization. Even though many geometries can be reparametrized, cf.~\cite{kapl2018construction}, this is a significant restriction on the geometry. 

It is nonetheless possible to use explicit $C^1$ constructions over more general multi-patch parametrizations by increasing the polynomial degree locally, as in~\cite{chan2018isogeometric,chan2019strong}, and/or by relaxing the smoothness conditions, as in~\cite{WEINMULLER2021114017,takacstoshniwal2022}. In~\cite{WEINMULLER2021114017} basis functions with higher polynomial degree and lower regularity are introduced locally at the interface between two patches, which are not exactly $C^1$ at the interface, but nevertheless yield optimal convergence rates in numerical tests.

In this paper, we focus on two aspects. One is to extend the approximate $C^1$ method introduced in~\cite{WEINMULLER2021114017} to multi-patch domains. Thus, a construction for basis functions at vertices needs to be developed. The second is to compare the approximate $C^1$ method with Nitsche's method, for which we derive conditions on the stability parameter. Thereby we see that the error obtained with the approximate $C^1$ method agrees with the error from Nitsche's method. 

The outline of the paper is as follows: We start with basic notations for B-splines and multi-patch geometries needed for the paper, and introduce the $C^0$ isogeometric multi-patch space. Then the model problem, more precisely, the biharmonic problem is stated and we define two weak formulations, the standard and the Nitsche formulation. In Section~\ref{sec:normal-derivatives} we give the $C^1$ smoothness conditions at the interfaces. Next, we present the basis constructions for the approximate $C^1$ method for the multi-patch geometries and introduce the $C^1$ space used for the approximate $C^1$ method. In Section~\ref{sec:discreteSpaceNitsche} we analyse how the parameter in Nitsche's method must choosen for the method to be stable, which is numerically shown in Section~\ref{sec:numerical-experiments}. There we also compare Nitsche's method with the approximate $C^1$ method on several examples.

\section{Preliminaries}

In this section, we give a brief overview of B-splines and present some notation concerning the multi-patch geometry. We start with the introduction of B-splines and their spaces. After that we present the definition of multi-patch geometries together with the underlying multi-patch topology, which is needed for the basis construction. The section concludes with the description of the $C^0$ isogeometric space.

\subsection{B-splines}

Given positive integers $p$, $r$ and $n$, with $r<p$, and a (uniform) mesh with mesh size $h = 1/n$, the open knot vector $\Xi \coloneqq ( \xi_1,...,\xi_{N+p+1} )$ with $N=p+1+(p-r)(n-1)$ satisfies 
\begin{equation}  
\Xi =
(\underbrace{0,\ldots,0}_{(p+1)-\mbox{\scriptsize times}},
\underbrace{\textstyle h,\ldots ,h}_{(p-r) - \mbox{\scriptsize times}}, 
\underbrace{\textstyle 2h,\ldots ,2h}_{(p-r) - \mbox{\scriptsize times}},\ldots, 
\underbrace{\textstyle (n-1)h,\ldots ,(n-1)h}_{(p-r) - \mbox{\scriptsize times}},
\underbrace{1,\ldots,1}_{(p+1)-\mbox{\scriptsize times}}). \label{eq:knotvector}
\end{equation}
For simplicity, we assume that the regularity is the same at all interior knots and consequently all the knots have the same multiplicity. In general, the knot vector does not need to be uniform and the knot multiplicity may be different for different knots, see also~\cite{WEINMULLER2021114017}. Since we assume uniform knot multiplicity $p-r$, the inter-element continuity is defined by $r$, i.e., we have $C^r$-smoothness at each knot. As usual, the B-spline basis functions $b_i$, with $i = 1,...,N$, can be constructed using the Cox--de Boor recursion, see~\cite{prautzsch2002}. We have
\begin{align*}
\mathcal{S} (p,r,h)  = \text{span} \{ b_i, \; i = 1,...,N \}
\end{align*}
which is the (univariate) spline space of degree $p$, regularity $r$ and mesh size $h$. The spline space consists of functions which are piece-wise polynomials and $C^r$ in $[0,1]$, more precisely,
\begin{equation}
\mathcal{S} (p,r,h)  \coloneqq \{ w \in C^r([0,1]) : w|_{(i h,(i+1) h)} \in \mathbb{P}^p \mbox{ for all }i=0,\ldots,n-1 \}.
\end{equation}
The univariate B-splines can be extended to the two-dimensional case by means of a tensor product structure. The multivariate spline space is defined in the parametric domain $\widehat{\Omega} = [0,1]^2$ by
\begin{align*}
 \boldsymbol{ \mathcal{S} } (\f p, \f r, \f h) = \mathcal{S}_1 (p_1,r_1,h_1) \otimes \mathcal{S}_2 (p_2,r_2,h_2) = \text{span} \{ \boldsymbol{b_{i}} \}_{1 \leq i_1 \leq N_1, \, 1 \leq i_2 \leq N_2},
\end{align*}
where $\f p=(p_1,p_2)$, $\f r=(r_1,r_2)$ and $\boldsymbol{i}=(i_1,i_2)$ denote the corresponding parameter pairs. The tensor-product spline basis function is the product of two univariate basis functions, i.e., $\boldsymbol{b_{i}}(u,v) = b_{i_1}(u)b_{i_2}(v)$ where $u,v \in [0,1]$. We assume throughout the paper that $p_1=p_2=p$, $r_1=r_2=r$ and $h_1=h_2=h$.

\subsection{Multi-patch geometry}\label{sec:multi-patch-geometry}

Let $\Omega$ be a bounded open subset of $\mathbb{R}^2$ with a sufficiently smooth boundary $\partial \Omega$. Moreover, let $\Omega$ be given through a multi-patch segmentation consisting of non-overlapping patches $\Omega^{(k)}$, $k \in \mathcal{M}_P = \{ 1, ..., K\}$, where $K > 0$ is the total number of patches, i.e.,
\[
 \overline{\Omega} = \bigcup_{k\in \mathcal{M}_P} \overline{\Omega^{(k)}},
\]
with $\Omega^{(k)} \cap \Omega^{(l)} = \emptyset$ for all $k\neq l$. Moreover, we assume that no hanging nodes exist. Each patch $\Omega^{(k)}$ is a spline patch with the geometry mapping $\f F^{(k)} \in (\f S^{(k)})^2$ with
\[
 \f F^{(k)} : \widehat{\Omega} \to \overline{\Omega^{(k)}},
\]
where $\f S^{(k)} = \f S( \f p^{(k)}, \f r^{(k)}, \f h^{(k)})$ is a tensor-product spline space of patch $\Omega^{(k)}$. For simplicity, we assume that the spaces in all patches are the same, i.e., $\f p^{(k)} = \f p,\; \f r^{(k)} = \f r$ and $\f h^{(k)} = \f h$ for all $k\in \mathcal{M}_P$.  The construction can also be used for different spline spaces with different degrees, regularities and mesh sizes as long as the interfaces are (partially) matching. We assume that all patch parametrizations $\f F^{(k)}$ are regular, i.e.,
\[
 \det \nabla \f F^{(k)} (u,v) \geq \underline{c} >0, \quad \forall \; (u,v)\in[0,1]^2,
\]
and the closure of $\Omega^{(k)}$ possesses no self-overlaps, i.e., 
\[
 \forall (u,v), (u',v') \in \widehat{\Omega}: (u,v) \neq (u',v')  \Rightarrow \f F^{(k)} (u,v) \neq \f F^{(k)} (u',v') .
\]
In this paper, we introduce a local and a global notation for the mesh objects (i.e.\ edges and vertices) of the multi-patch. While the local notation describes the mesh objects of a single patch, the global notation concerns the relation of edges and vertices on the whole geometry.

Starting with the local setting, each single patch $\Omega^{(k)}$ has four edges $E_s^{(k)}$ and four vertices $V_s^{(k)}$, where $s = 1,\ldots,4$. Following the notation as in Figure~\ref{fig:global_local_notation}, we can describe the edges as follows: $\f F^{(k)}(\widehat{E}_{s}^{(k)}) = E_{s}^{(k)}$ with 
\begin{align*}
 \widehat{E}_{1}^{(k)} &= \{(u, 0)^T, u \in (0,1)\}, \quad
 \widehat{E}_{2}^{(k)} = \{(1,v)^T, v \in (0,1)\}, \\
 \widehat{E}_{3}^{(k)} &= \{(u, 1)^T, u \in (0,1)\}, \quad
 \widehat{E}_{4}^{(k)} = \{(0,v)^T, v \in (0,1)\},
\end{align*}
where $\widehat{E}_{s}^{(k)}$ represent the edges in the parameter setting. For the vertices, we have 
$\f F^{(k)}(\widehat{V}_{s}^{(k)}) = V_{s}^{(k)}$ with 
\begin{align*}
 \widehat{V}_{1}^{(k)} &= (0,0)^T, \quad
 \widehat{V}_{2}^{(k)} = (1,0)^T, \\
 \widehat{V}_{3}^{(k)} &= (1,1)^T, \quad
 \widehat{V}_{4}^{(k)} = (0,1)^T.
\end{align*}
In the global context, the multi-patch consists of several patches $\Omega^{(k)}$ where an edge is either (a) a boundary edge or (b) an interface edge, that is
\begin{align*}
    E_s^{(k)} \ldots \begin{cases}
                \text{boundary edge} & \text{if } E_s^{(k)} \in \partial \Omega, \\
                \text{interface edge} & \text{if } E_s^{(k)} \not\in \partial \Omega \text{ and } \exists l (\neq k) \exists s_l \text{, s.t. } E_s^{(k)} = E_{s_l}^{(l)} .
              \end{cases}
\end{align*}
We introduce the pair $(k,s)$ as a short notation for the index of the edge $E_s^{(k)}$. Then for the interface we introduce the index pair $\kappa = (k,l)$, if there exist $(k,s_k)$, $(l,s_l)$, with $E_{s_k}^{(k)} = E_{s_l}^{(l)}$. Furthermore, we define the set $\mathcal{M}_I$ which collects the ordered pairs of patch-indices for all interfaces and the set $\mathcal{M}_E$ which collects the pairs $(k,s)$, each corresponding to a boundary edge. The sets are defined as
\begin{align*}
 \mathcal{M}_I &= \{ \kappa = (k,l) \; | \; \exists s_k, s_l \in \{1,\ldots,4\}, \text{ s.t. } E_{s_k}^{(k)} = E_{s_l}^{(l)} \text{ and } k < l \}, \\
 \mathcal{M}_E &= \{ \sigma = (k,s) \; | \; \exists s \in \{1,\ldots,4\}, \text{ s.t. } E_s^{(k)} \in \partial \Omega \}.
\end{align*}
In addition, we denote the interface of $\kappa = (k, l) \in \mathcal{M}_I$ with the notation $I_{\kappa}$ just as we denote the boundary edges of $\sigma = (k, s) \in \mathcal{M}_E$ with the notation $E_\sigma$. We assume that for two adjacent patches $\Omega^{(k)}$ and $\Omega^{(l)}$ the patch parametrizations agree along the interface $I_{\kappa}$, summarized in the following.
\begin{ass}[$C^0$-conformity at the interfaces] \label{ass::c0conformityatinterface}
The parametrizations of the two adjacent
patches $k$ and $l$ meet $C^0$ along the interface $I_{\kappa}$, with $\kappa=(k,l)$, i.e., there exists an Euclidean motion $R_{\kappa} : \widehat{E}_{s_k}^{(k)} \rightarrow \widehat{E}_{s_l}^{(l)}$ (a mapping which is a combination of rotation, translation and reflection), such that
  \begin{align*}
  \f F^{(k)}(u,v) = \f F^{(l)} (R_{\kappa}(u,v)) \quad \forall \; (u,v) \in \widehat{E}_{s_k}^{(k)}.
\end{align*}
\end{ass}
Further, there exist three different types of vertices: (a) corner vertices, (b) interface-boundary vertices or (c) inner vertices. We have
\begin{align*}
  V_s^{(k)} \ldots \begin{cases}
                \text{corner vertex} & \text{if } V_s^{(k)} \in \partial \Omega \text{ and } \nexists (l,s_l) (\neq (k,s)) \text{, s.t. } V_s^{(k)} = V_{s_l}^{(l)}, \\
                \text{interface-boundary vertex} & \text{if } V_s^{(k)} \in \partial \Omega \text{ and } \exists l (\neq k)\exists s_l \text{, s.t. } V_s^{(k)} = V_{s_l}^{(l)}, \\
                \text{inner vertex} & \text{if } V_s^{(k)} \not\in \partial \Omega . \\
              \end{cases}
\end{align*}
Similar to the edges, we introduce the set $\mathcal{M}_V$ which collects all (unique) vertices $V_\iota$. Let $\nu$ be the valence of vertex $V_\iota$ and $\Omega^{(k_1)},...,\Omega^{(k_\nu)}$ be the patches around the vertex $V_\iota$. Then we have $V_\iota = V_{s_{k_1}}^{(k_1)} = ... = V_{s_{k_\nu}}^{(k_\nu)}$ and the set is defined with ordered tuples of patch-indices:
\begin{align*}
    \mathcal{M}_V &= \left\{ \iota = (k_1,...,k_\nu) \; | \; \begin{array}{l} 
     V_{s_{k_1}}^{(k_1)} = ... = V_{s_{k_\nu}}^{(k_\nu)} \text{ and } k_\nu > ... > k_1 \mbox{ and } \\
     \nexists (k',s') \notin \{(k_1,s_{k_1}),\ldots,(k_\nu,s_{k_\nu})\} \mbox{ s.t. }  V_{s'}^{(k')} = V_{s_{k_1}}^{(k_1)} 
    \end{array}
    \right\}. 
\end{align*}
Figure~\ref{fig:global_local_notation} gives an example of the nomenclature of the topology for a multi-patch parametrization.

\begin{figure}[h!]
 \centering
 \includegraphics[width=0.9\textwidth]{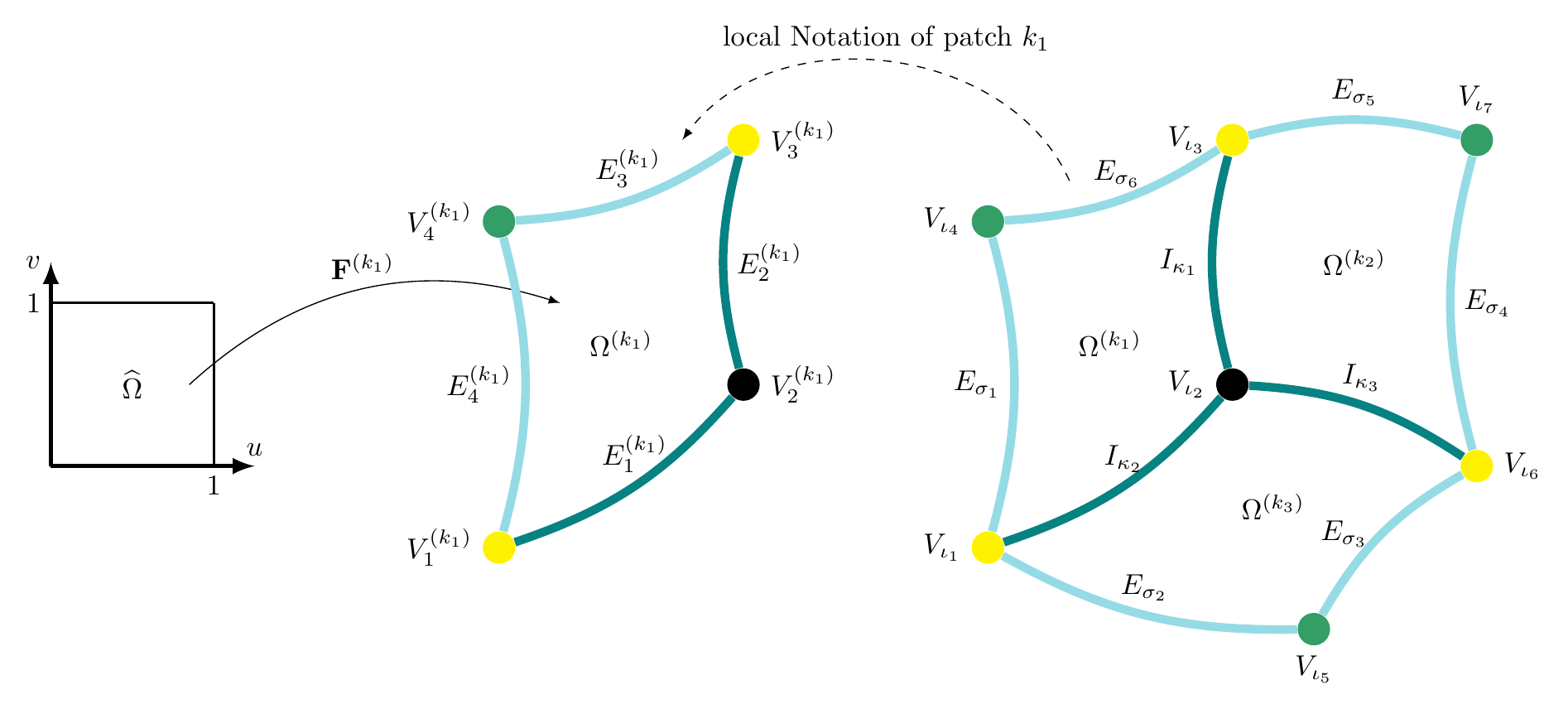}
 \caption{The multi-patch notation: on the right side the global notation is defined and on the left side the local notation of patch $k_1$ where we also have the mapping $\f F^{(k_1)}$ from the parameter domain $\widehat{\Omega} = [0,1]^2$ to the patch $\overline{\Omega^{(k_1)}}$. We start with the local notation: each patch has four edges and four vertices. As an example, the local notation of patch $k_1$ is shown in the figure. Its local edges are indicated by $E_s^{(k_1)}$ and its vertices by $V_s^{(k_1)}$, where $s \in \{ 1,...,4 \}$. In the global setting, the given geometry is constructed with three patches, indicated with $\Omega^{(k_1)}$, $\Omega^{(k_2)}$ and $\Omega^{(k_3)}$. Furthermore the geometry has three interface edges, labeled with $I_{\kappa}, \; \kappa \in \{ 1,...,3 \} $, six boundary edges, denoted as $E_\sigma, \; \sigma \in \{ 1,...,6 \} $ and seven vertices, $V_\iota, \; \iota \in \{ 1,...,7 \} $. From the topology of the geometry, we see that $E_{1}^{(k_1)}$ and $E_{2}^{(k_1)}$ are interface edges and $E_{3}^{(k_1)}$ and $E_{4}^{(k_1)}$ are boundary edges. Furthermore, the vertex $V_{2}^{(k_1)}$ is an interior vertex, the vertices $V_{1}^{(k_1)}$ and $V_{3}^{(k_1)}$ are interface-boundary vertices and the vertex $V_{4}^{(k_1)}$ a corner vertex.} \label{fig:global_local_notation}
\end{figure}

\subsection{$C^0$ isogeometric multi-patch spaces}

In this section we define isogeometric function spaces over multi-patch domains. An isogeometric function $\varphi:\Omega\rightarrow \mathbb{R}$ is defined such that
\[
 \varphi \circ \f F^{(k)} \in \f S^{(k)}.
\]
For this definition to be consistent at the patch-interfaces, we assume global $C^0$-smoothness, resulting in the following definition. The $C^0$ \emph{isogeometric multi-patch space} on $\Omega$ is given as
\begin{equation}
  \mathcal{X}_h = \{ \varphi \in C^0 (\Omega) \text{ such that } w^{(k)} = \varphi \circ \f F^{(k)} \in \f S^{(k)} \; \forall \, k \in \mathcal{M}_P \}.\label{eq:space-X_h}
\end{equation}
Since the patch parametrizations are meeting $C^0$ at the interface, see Assumption~\ref{ass::c0conformityatinterface}, and the knot vectors are assumed to be uniform, see~\eqref{eq:knotvector}, the meshes are conforming along the interfaces. Thus we achieve $C^0$-smoothness at the interfaces if the corresponding spline coefficients at both sides of the interface are equal. Therefore a basis for the isogeometric $C^0$ multi-patch space $\mathcal{X}_h$ can be constructed easily, cf.~\cite{beiraodaveiga2014mathematical} for a more detailed description.

\section{The model problem and its weak formulations}

As a model problem we consider the biharmonic equation. We first set up the equation with two different combinations of boundary conditions. Then we present the weak formulation in the continuous setting, as well as in a discontinuous setting, following Nitsche's method. The section concludes with the two discrete formulations, a Nitsche discretization having discontinuous and a strong discretization having approximately continuous derivatives across interfaces.

\subsection{The model problem}

Let $\Omega$ be a multi-patch geometry as in Section~\ref{sec:multi-patch-geometry} and let $f:\Omega\rightarrow \mathbb{R}$ be a given source funtion on $\Omega$. The biharmonic equation is given as
\begin{align}
  \Delta^2 \varphi = f \quad \text{ in } \Omega \label{eq:biharmonic}
\end{align}
with the boundary conditions
\begin{align}
  \left.
  \begin{array}{ll}
   \varphi &= g_0 \\
   \partial_{\f n} \varphi &= g_1
  \end{array}\right\}
   \quad &\text{ on } \Gamma_N \quad \text{ and } \label{eq:boundary2} \\
  \left.
  \begin{array}{ll}
   \varphi &= g_0 \\
   \Delta \varphi &= g_2
  \end{array}\right\}
   \quad &\text{ on } \Gamma_L, \label{eq:boundary3} 
\end{align}
where $\overline{\Gamma_N \cup \Gamma_L} = \partial \Omega$, $\Gamma_N \cap \Gamma_L = \emptyset$ and $\f n $ is the unit outward normal vector on $\partial \Omega$. The functions $g_0$, $g_1$ and $g_2$ are all assumed to be sufficiently smooth. The boundary conditions concerning the function value and normal derivative can be imposed as essential boundary conditions, hence the problem can be homogenized, see also Section~\ref{sec:imposing-bc}. So we assume from now on, that the problem is already homogeneous, that is, $g_0 = g_1 = 0$. 

\subsection{The standard weak formulation}

We introduce the space
\[
 \mathcal{V}_0 \coloneqq \{ \varphi \in H^2 (\Omega) \, | \, \varphi = 0 \text{ on } \partial \Omega \text{ and } \partial_{\f n} \varphi = 0 \text{ on } \Gamma_N \},
\]
where for any $m \in \mathbb{N}$ and for any open, sufficiently smooth domain $D$ the space $H^m (D)$ is the standard Sobolev space over $D \subset \mathbb{R}^2$ with the standard scalar product 
\[
 (\varphi, \psi)_{H^m(D)} \coloneqq \int_{D} \sum_{i+j=m} (\partial_x^i \partial_y^j \varphi )(\partial_x^i \partial_y^j \psi ) \,\mathrm{d} \f x
\]
and norm
\[
 \| \varphi \|_{H^m(D)} \coloneqq \left( \sum_{j=0}^m (\varphi, \varphi)_{H^j(D)} \right)^{1/2}.
\]
We denote by $(\cdot, \cdot)_{H^0(D)} = (\cdot, \cdot)_{L^2(D)}$ the scalar product of the standard Lebesgue space $L^2 (D)$ with the norm $\norm{\cdot}_{L^2(D)}$. The weak formulation of problem (\ref{eq:biharmonic})-(\ref{eq:boundary3}) is the following.
\begin{problem}\label{problem:model-problem}
Find $\varphi \in \mathcal{V}_0$ such that
\begin{align*}
  (\Delta \varphi, \Delta \psi)_{L^2(\Omega)} = ( f,\psi )_{L^2(\Omega)} + (g_2, \partial_{\f n} \psi)_{L^2(\Gamma_L)} \qquad \forall \; \psi \in \mathcal{V}_0,
\end{align*}
where $\partial_{\f n}$ is the normal derivative at the boundary.
\end{problem}

\subsection{The Nitsche formulation}

Since the geometry is given by a collection of subdomains $\Omega = \bigcup_k \Omega^{(k)}$, a discontinuous Galerkin-type approach is a natural alternative to a fully conforming discretization. Therefore, we introduce for each $m \in \mathbb{N}$ the broken Sobolev space
\[
  \mathcal{H}^m (\Omega) \coloneqq \{ \varphi \in H^1 (\Omega) \; | \; \varphi|_{\Omega^{(k)}} \in H^m (\Omega^{(k)}) \}
\]
with the norm and inner product
\begin{align*}
  \norm{\varphi}_{\mathcal{H}^m (\Omega)} \coloneqq \left( \sum_{i=0}^m (\varphi,\varphi)_{\mathcal{H}^i (\Omega)} \right)^{1/2} \quad \text{ and } \quad (\varphi,\psi)_{\mathcal{H}^m (\Omega)} \coloneqq \sum_{k\in \mathcal{M}_P} (\varphi,\psi)_{H^m (\Omega^{(k)})}.
\end{align*}
 Moreover, we introduce the discretization space
\begin{align}
  \mathcal{X}_0 &:= \{ \varphi \in \mathcal{H}^2 (\Omega) \; | \; \varphi = 0 \text{ on } \partial \Omega \text{ and } \partial_{\f n} \varphi = 0 \text{ on } \Gamma_N \}.\label{eq:space-X_0}
\end{align}
Let $\average{\circ}_{\kappa} = \frac{1}{2}(\circ^{(l)}+\circ^{(k)})|_{I_{\kappa}}$ denote the average and $\jump{\circ}_{\kappa} =(\circ^{(l)}-\circ^{(k)})|_{I_{\kappa}}$ denote the jump across the interface~$I_{\kappa}$.
Since $\mathcal{V}_0 = H^2 (\Omega) \cap \mathcal{X}_0$ we have $\jump{\varphi}_{\kappa} = 0$ for any function $\varphi \in \mathcal{V}_0$ and for all interfaces $I_{\kappa}$.

We follow Nitsche's method, that is, we reformulate the model problem following a symmetric interior penalty Galerkin approach. Thus, we consider the following problem, cf.\ in~\cite{moore2018discontinuous,moore2020multipatch}.
\begin{problem} \label{problem:nitscheformulation}
 Find $\varphi \in \mathcal{X}_0$ such that
 \begin{align*}
    (\varphi,\psi)_{A_h} = ( f,\psi )_{L^2(\Omega)} + (g_2, \partial_{\f n}  \psi)_{L^2(\Gamma_L)}  \quad \forall \psi \in \mathcal{X}_0,
 \end{align*}
where
\begin{align*}
  (\varphi,\psi)_{A_h} &= (\Delta\varphi, \Delta\psi)_{\mathcal{H}^0(\Omega)} - (\varphi, \psi)_{B_h} - (\psi, \varphi)_{B_h} + (\varphi, \psi)_{C_h} \\
   (\varphi, \psi)_{B_h} &= \sum_{\kappa \in \mathcal{M}_I} ( \jump{\partial_{\f n_{\kappa}} \varphi}_{\kappa}, \average{\Delta \psi}_{\kappa} )_{L^2(I_{\kappa})} \\
   (\varphi, \psi)_{C_h} &= \sum_{\kappa \in \mathcal{M}_I} \frac{\eta_{\kappa}}{h} ( \jump{\partial_{\f n_{\kappa}} \varphi}_{\kappa}, \jump{\partial_{\f n_{\kappa}} \psi}_{\kappa} )_{L^2(I_{\kappa})}.
\end{align*}
where $\eta_{\kappa} > 0$ is a prescribed stability parameter assigned to each interface $I_{\kappa}$.
\end{problem}
Conditions on the stability parameters $\eta_{\kappa}$ are discussed in Section~\ref{sec:discreteSpaceNitsche}.

\subsection{The discrete formulations}

In this subsection, we formulate two discrete problems. We have introduced the isogeometric space $\mathcal{X}_{h}\subset \mathcal{H}^2 (\Omega)$ in~\eqref{eq:space-X_h}. Thus, it yields a suitable discretization space~$\mathcal{X}_{h,0}=\mathcal{X}_{h} \cap \mathcal{X}_{0}$ for Problem~\ref{problem:nitscheformulation}. However, the space $\mathcal{V}_{h,0} = \mathcal{X}_{h}\cap \mathcal{V}_{0} = \mathcal{X}_{h}\cap C^1 (\Omega)$ is in general too restrictive, cf.~\cite{collin2016analysis}. Hence, we introduce a different space~$\widetilde{\mathcal{V}}_{h,0} \neq \mathcal{V}_{h,0}$ to  discretize Problem~\ref{problem:model-problem}. The construction of this space is described in Section~\ref{sec:construction_of_discrete_space}. While the first space $\mathcal{X}_{h,0}$ fulfills the required conformity $\mathcal{X}_{h,0} \subset \mathcal{X}_{0}$ by definition, the second space does not in general fulfill the conformity relation~$\widetilde{\mathcal{V}}_{h,0} \nsubseteq \mathcal{V}_{0}$. The reason for this is that the space is spanned with basis functions that are not $C^1$ at the interfaces, but only approximately $C^1$. However, in the limit the jump of the normal derivative across the interface vanishes by construction, see~\cite{WEINMULLER2021114017}. Therefore, for the discretization we treat the functions from $\widetilde{\mathcal{V}}_{h,0}$ as if they were $C^1$ at the interfaces. As a consequence, the additional terms vanish in the variational formulation and no interface integrals need to be calculated.
Furthermore, in some special cases, we achieve exact $C^1$ smoothness at the interface and we have $\widetilde{\mathcal{V}}_{h,0} \subset H^2(\Omega)$ which is described in more detail in Remark~\ref{rmk:exactC1smoothness}. 

Discretizing Problem~\ref{problem:model-problem} using~$\widetilde{\mathcal{V}}_{h,0}$, we obtain the following discrete problem.
\begin{problem} \label{problem:approxC1discreteformulation}
 Find $\varphi_h \in \widetilde{\mathcal{V}}_{h,0}$ such that
 \begin{align}
  (\Delta\varphi_h,\Delta\psi_h)_{\mathcal{H}^0(\Omega)} =  ( f,\psi_h )_{L^2(\Omega)} + (g_2, \partial_{\f n}  \psi_h)_{L^2(\Gamma_L)} \quad \forall \psi_h \in \widetilde{\mathcal{V}}_{h,0}.
 \end{align}
\end{problem}
Discretizing Problem~\ref{problem:nitscheformulation} using the space~$ \mathcal{X}_{h,0}$, we obtain the following discrete problem.
\begin{problem} \label{problem:discreteformulationnitsche}
 Find $\varphi_h \in \mathcal{X}_{h,0}$ such that
 \begin{align}
  (\varphi_h,\psi_h)_{A_h} = ( f,\psi_h )_{L^2(\Omega)} + (g_2, \partial_{\f n}  \psi_h)_{L^2(\Gamma_L)}  \quad \forall \psi_h \in \mathcal{X}_{h,0},
 \end{align}
\end{problem}
 As pointed out before, the space $\mathcal{V}_{h,0} = \mathcal{X}_{h} \cap \mathcal{V}_{0} \subset H^2(\Omega)$ is not a suitable discretization space for Problem~\ref{problem:approxC1discreteformulation}, since its approximation power is in general drastically reduced. In Section~\ref{sec:numerical-experiments} we compare with numerical experiments the solutions of Problem~\ref{problem:discreteformulationnitsche} and Problem~\ref{problem:approxC1discreteformulation}.

\section{Normal derivatives and $C^1$ smoothness conditions at interfaces}
\label{sec:normal-derivatives}

In order to solve fourth order problems on multi-patch domains, we need to give a description of the normal derivative of an isogeometric function across an interface. This is necessary both for the definition of the bilinear forms $(\cdot, \cdot)_{B_h}$ and $(\cdot, \cdot)_{C_h}$ as well as for the definition of the isogeomtric space $\widetilde{\mathcal{V}}_{h,0}$, which we develop in detail in Section~\ref{sec:construction_of_discrete_space}. Let us focus  first on one edge of a patch $\Omega^{(k)}$, without loss of generality we consider the edge with $u=0$, that is,
\begin{align*}
  E^{(k)}_4 = \{ \f F^{(k)} (0,v) : \; v \in [0,1] \}.
\end{align*}
We now define the tangential derivative along the edge to be
\[
  \f t(v) := \partial_v \f F^{(k)} (0,v),
\]
and the unit tangent vector
\[
  \f t_0 (v) := \frac{\f t (v)}{\tau (v)},
\]
where $\tau (v) = \norm{\f t (v)}$. We define the outer normal vector of $\partial \Omega^{(k)}$ to be $\f n_k$, with
\begin{align}
 \f n_k \circ \f F^{(k)} (0, v) &= a_1^{(k)} (v) \partial_u \f F^{(k)} (0,v) + a_2^{(k)} (v) \partial_v \f F^{(k)} (0,v) \label{eq:exactnormalvector}
\end{align}
where $a_1^{(k)}, a_2^{(k)}$ are functions given as
\begin{align*}
 a_1^{(k)} (v) = -\frac{1}{\alpha^{(\kappa,k)} (v)} \quad \text{ and } \quad a_2^{(k)}(v) = \frac{ \beta^{(\kappa,k)} (v)}{\alpha^{(\kappa,k)} (v)},
\end{align*}
with
\begin{align}
 \left. \begin{array}{r l}
 \alpha^{(\kappa,k)} (v) &= \det \left( \partial_u \f F^{(k)} (0,v), \f t(v) \right), \\
 \beta^{(\kappa,k)} (v) &= \frac{\partial_u \f F^{(k)} (0,v) \cdot \f t_0(v)}{\tau (v)}, \end{array}  \right. \label{def::alpha_beta}
\end{align}
following the proof of~\cite[Proposition 2]{collin2016analysis}.

Given a function $\varphi:\Omega\rightarrow \mathbb{R}$, with $\varphi\circ \f F^{(k)} = f^{(k)}$, the normal derivative of $\varphi$ along the edge can be described by
\begin{align}
 \left( \partial_{\f n_k} \varphi \right) \circ \f F^{(k)} = \f n_k \cdot \left((\nabla_{\boldsymbol{x}} \varphi) \circ \f F^{(k)}\right) &= (a_1^{(k)} , a_2^{(k)} ) \nabla f^{(k)} \\
 &= -\frac{1}{\alpha^{(\kappa,k)} (v)} \left( \partial_u f^{(k)} (0,v) -\beta^{(\kappa,k)} (v) \partial_v f^{(k)} (0,v) \right), \label{eq:normal-derivative-one-edge}
\end{align}
since
\[
 \nabla \f F^{(k)} \nabla_{\boldsymbol{x}} \varphi = \nabla f^{(k)}.
\]
Here $\nabla_{\boldsymbol{x}}$ denotes the gradient in physical space and $\nabla$ denotes the gradient in $(u,v)$-coordinates, in particular,
\[
 \nabla \f F^{(k)} = \left( \partial_u \f F^{(k)} , \partial_v \f F^{(k)} \right)^T.
\]  
Hence, a patch-wise defined function $\varphi$ is $C^1$-smooth along the interface $I_{\kappa}$ between $\Omega^{(k)}$ and $\Omega^{(l)}$, iff
\begin{align}
 \partial_{{\f n}_k} \varphi |_{E_{s_k}^{(k)}} = - \partial_{{\f n}_l} \varphi |_{E_{s_l}^{(l)}} , \label{eq:C1-one-interface}
\end{align}
Here we consider the restriction to $E_{s_k}^{(k)}$ to be the limit from the side $\Omega^{(k)}$, whereas the restriction to $E_{s_l}^{(l)}$ denotes the limit from $\Omega^{(l)}$.
\begin{rmk}
Let $s_k=4$ and $s_l=1$, i.e., 
\[
 \f F^{(k)} (0, t) = \f F^{(l)} (t, 0),
\]
and
\[
  \f n_{\kappa} = \f n_k = - \f n_l,
\]
we obtain from \eqref{eq:C1-one-interface} and \eqref{eq:normal-derivative-one-edge} the following $C^1$ condition for the pull-backs,
\begin{align*}
     -\frac{1}{\alpha^{(\kappa,k)} (t)} \left( \partial_u f^{(k)} (0,t) -\beta^{(\kappa,k)} (t) \partial_v f^{(k)} (0,t) \right) = \frac{1}{\alpha^{(\kappa,l)} (t)} \left( \partial_v f^{(l)} (t,0) - \beta^{(\kappa,l)} (t) \partial_u f^{(l)} (t,0) \right),
\end{align*}
where
\begin{align*}
 \left. \begin{array}{r l}
 \alpha^{(\kappa,l)} (t) &= \det \left( \f t(t) , \partial_v \f F^{(l)} (t,0) \right), \\
 \beta^{(\kappa,l)} (t) &= \frac{\f t_0(t) \cdot \partial_v \f F^{(l)} (t,0)}{\tau (t)}. \end{array}  \right.
\end{align*}
In general, the isogeometric function is $C^1$, if its graph parametrization is $G^1$, which is determined by the relation
\begin{align*}
  \alpha^{(\kappa,k)} (t) \begin{bmatrix}
                           \partial_v \f F^{(l)} (t,0) \\
                           \partial_v f^{(l)} (t,0)
                          \end{bmatrix}
   &+  \alpha^{(\kappa,l)} (t) \begin{bmatrix}
                               \partial_u \f F^{(k)} (0,t) \\
                               \partial_u f^{(k)} (0,t)
                              \end{bmatrix}
    - \\ &\left( \alpha^{(\kappa,k)} (t) \beta^{(\kappa,l)} (t) + \alpha^{(\kappa,l)} (t) \beta^{(\kappa,k)} (t) \right) \begin{bmatrix}
                            \partial_v \f F^{(k)} (0,t) \\
                            \partial_v f^{(k)} (0,t)
                            \end{bmatrix}
     = \boldsymbol{0}.
\end{align*}
For a further discussion on the statement above, see, e.g.,~\cite{GrPe15,KaViJu15}. Due to this relation, the functions $\alpha^{(\kappa,k)}, \alpha^{(\kappa,l)}$ and $\beta^{(\kappa,k)}, \beta^{(\kappa,l)}$ are called gluing data. When constructing basis functions related to boundary edges, we introduce artificial gluing data, by setting $\alpha^{(k,s)} \equiv 1$ and $\beta^{(k,s)} \equiv 0$ iff the edge $E^{(k)}_s$ is a boundary edge.
\end{rmk}

\section{The construction of the discrete space for the approximate $C^1$ method} \label{sec:construction_of_discrete_space}

In this section, we explain the basis construction for the space $\widetilde{\mathcal{V}}_{h,0}$ used in Problem~\ref{problem:approxC1discreteformulation}. To do this, we first describe the basis functions in the local setting, i.e., patch-wise, and then define the global basis functions by gluing the functions together at the interfaces and at the vertices.

In the local setting, we introduce three different types of subspaces: the patch interior, the edge and the vertex space. Each patch $\Omega^{(k)}$ can be described by these spaces, more precisely, each patch can be divided into nine subspaces: one patch interior, four edge and four vertex spaces, corresponding to the topology of the patch geometry. Each subspace (interior, edge and vertex space) is spanned by different basis functions. Let $\Omega^{(k)}$ be the patch in which we have the interior space $\mathcal{A}_\circ^{(k)}$, the four edge spaces $\mathcal{A}_{E,s}^{(k)}$ and the four vertex spaces $\mathcal{A}_{V,s}^{(k)}$, $s =1,...,4$. 
In Subsection~\ref{sec:theninesubspaces} we describe the construction in the parameter domain. Figure~\ref{fig:ninesubspaces} shows an overview of the local patch-wise separation.

\begin{figure}[h!]
 \centering
 \includegraphics[width=\textwidth]{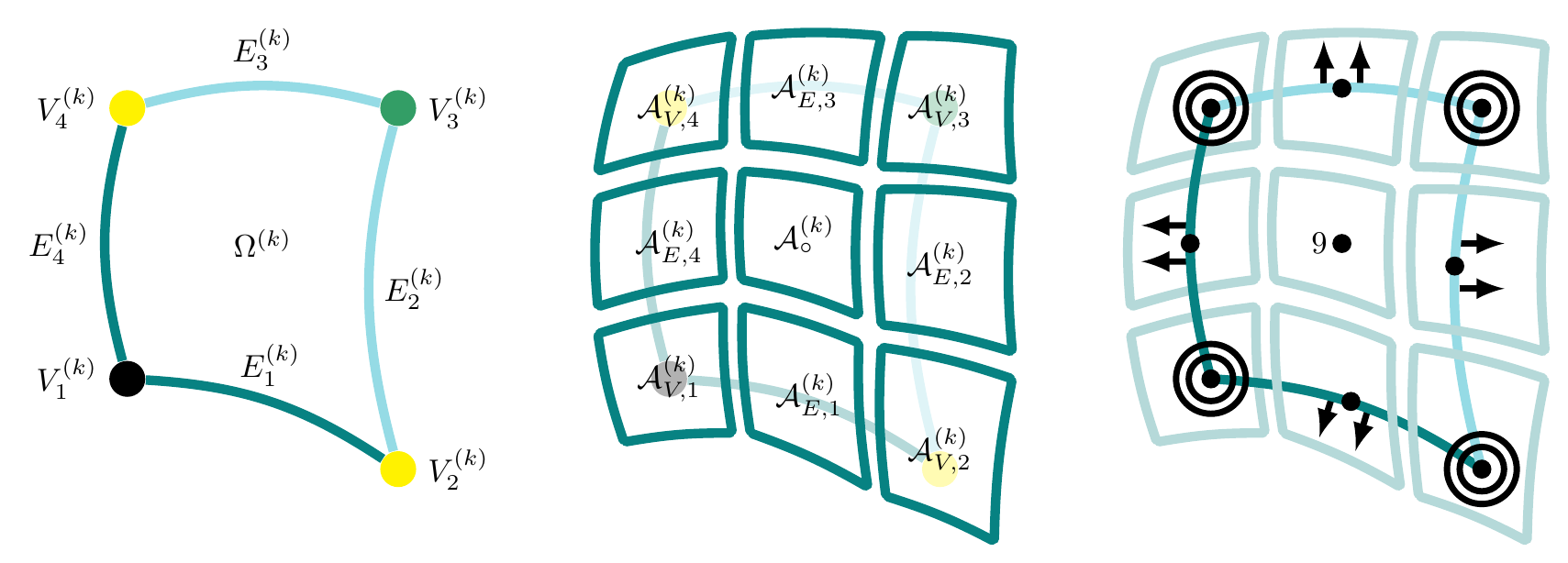}
 \caption{The nine subspaces of patch $k$: one interior, four edge and four vertex spaces. In the right figure we give an example for the dofs. Thus, we choose the knot vector $\Xi = (0, 0, 0, 0, 1/4,1/2,3/4,1,1,1,1)$ for both directions to obtain the number of dofs. Follow the construction in Subsection~\ref{sec:theninesubspaces}, we have nine interior dofs, three for each edge and six for each vertex. Similar to the element notation in FEM, the arrows at the edges represent the dofs for the normal derivative at the edge and the circles at the vertices the dofs for the first and second derivatives at the vertices.} \label{fig:ninesubspaces}
\end{figure}

To obtain the global basis functions, we need to match the basis functions at the interfaces and vertices to ensure the $C^0$ and the approximate $C^1$ continuity. This procedure is explained in Subsection~\ref{sec:coc1coupling}. The resulting spaces are defined as $\mathcal{A}_{I_\kappa}$ and $\mathcal{A}_{V_\iota}$. Further, we denote the boundary space as $\mathcal{A}_{B_\sigma}$ for each boundary edge. As the result, we have
\[
 \widetilde{\mathcal{V}}_{h} = \left( \bigoplus_{k \in \mathcal{M}_P} \mathcal{A}_\circ^{(k)} \right) \oplus \left( \bigoplus_{\kappa \in \mathcal{M}_I} \mathcal{A}_{I_\kappa} \right) \oplus \left( \bigoplus_{\iota \in \mathcal{M}_V} \mathcal{A}_{V_\iota} \right) \oplus \left( \bigoplus_{\sigma \in \mathcal{M}_E} \mathcal{A}_{B_\sigma} \right)
\]
and
\[
 \widetilde{\mathcal{V}}_{h,0} = \widetilde{\mathcal{V}}_{h} \cap \mathcal{X}_0.
\]

\subsection{The patch-local subspaces} \label{sec:theninesubspaces}

In this subsection we explain how the pull-backs $\widehat{\mathcal{A}}_\circ^{(k)}$, $\widehat{\mathcal{A}}_{E,s}^{(k)}$ and $\widehat{\mathcal{A}}_{V,s}^{(k)}$ of the different spaces ${\mathcal{A}}_\circ^{(k)}$, ${\mathcal{A}}_{E,s}^{(k)}$ and ${\mathcal{A}}_{V,s}^{(k)}$ are constructed. We visualize them with the help of an example as shown in Figure~\ref{fig:ninesubspaces}. There we choose the knot vector $\Xi = (0, 0, 0, 0, 1/4,1/2,3/4,1,1,1,1)$ in both directions. Before we can explain the construction in detail, we need to introduce the concept of the approximated gluing data in the following subsection.

\subsubsection{The approximated gluing data}

As shown in~\cite{kapl2019argyris}, a basis of the (exactly) $C^1$-smooth isogeometric space can be constructed from the gluing data, which appear linearly in the formula similar to~\eqref{eq:taylorexpansion}. The gluing data $\alpha^{(\kappa,k)}$ and $\beta^{(\kappa,k)}$ defined in~\eqref{def::alpha_beta} are in general rather complex. While the functions $\alpha^{(\kappa,k)}$ are splines from $\mathcal{S}(2p-1, r-1,h)$, the functions $\beta^{(\kappa,k)}$ are even piecewise rational functions with regularity $r-1$. Thus, this results in the pull-back of the isogeometric function being a non-trivial rational function. To obtain ``nicer'' basis functions, that is, piecewise polynomials of a controlable, bounded degree, we introduce the approximated gluing data as spline functions, which are computed by a projection into $\mathcal{S} (\widetilde{p}, \widetilde{r}, h)$ with the operator $P_h$, that is,
\[
  \widetilde{\alpha}^{(\kappa,k)} := P_h (\alpha^{(\kappa,k)}) \quad \text{and} \quad   \widetilde{\beta}^{(\kappa,k)} := P_h (\beta^{(\kappa,k)}).
\]
For the numerical experiments, we fix the spline parameters of the approximated gluing data to $\widetilde{p} = p - 1$ and $\widetilde{r} = \widetilde{p} - 1$ to obtain optimal convergence rates, in accordance with~\cite{WEINMULLER2021114017}. One can use a higher polynomial degree and/or lower regularity to approximate the gluing data to improve the approximation of the $C^1$ continuity (or even in some special cases to obtain an exactly $C^1$-smooth space), but this does not improve the results in the numerical experiments, see~\cite{WEINMULLER2021114017}.
Similar to~\eqref{eq:exactnormalvector}, we can express the approximate normal vector $\widetilde{\f n}_k$ for the patch $\Omega^{(k)}$ on the edge $E^{(k)}_4$ ($u=0$) as 
\begin{align}
 \widetilde{\f n}_k \circ \f F^{(k)} (0, v) &= \widetilde{a}_1^{(k)} (v) \partial_u \f F^{(k)} (0,v) + \widetilde{a}_2^{(k)} (v) \partial_v \f F^{(k)} (0,v) \label{eq:approxnormalvector}
\end{align}
where the functions in the linear combination are given as
\begin{align*}
 \widetilde{a}_1^{(k)} = -\frac{1}{\widetilde{\alpha}^{(\kappa,k)}} \quad \text{ and } \quad \widetilde{a}_2^{(k)} = \frac{ \widetilde{\beta}^{(\kappa,k)}}{\widetilde{\alpha}^{(\kappa,k)}}.
\end{align*}

\subsubsection{$C^1$ expansion along one edge}

For the construction of the basis functions, we use a Taylor expansion of the trace and of the transversal derivative as stated in~\cite[Proposition~5]{kapl2017dimension}. For simplicity, we consider again the edge $\widehat{E}^{(k)}_4$ on $\Omega^{(k)}$, as in Section~\ref{sec:normal-derivatives}. Then the functions are defined for all $u,v \in [0,1]$ by 
\begin{align}
f^{(k)}_4 [b^+,b^-] (u,v) = b^+ (v) \left( b_{1} (u) + b_{2} (u) \right) + \left( \widetilde{\alpha}^{(\kappa, k)} (v) b^- (v) + \widetilde{\beta}^{(\kappa, k)} (v) (b^+)'(v) \right) \frac{h}{p} b_{2} (u), \label{eq:taylorexpansion}
\end{align}
where $b^+ \in \mathcal{S}^+ = \mathcal{S} (p, p-1, h)$ and $b^- \in \mathcal{S}^- = \mathcal{S} (p-1,p-2,h)$. Representations $f^{(k)}_s [b^+,b^-]$ for the functions along other edges $\widehat{E}^{(k)}_s$ are defined equivalently. The choice of the spaces $\mathcal{S}^+$ and $\mathcal{S}^-$ is derived from~\cite[Corollary~7]{kapl2017dimension} for AS-$G^1$ geometries, which also turned out to be the ideal choice to obtain optimal convergence rates in the numerical tests on general geometries, cf.~\cite{WEINMULLER2021114017}. We have by construction
\[
 f^{(k)}_4 [b^+,b^-]  \in \mathcal{S}_1 (p, r, h ) \otimes \mathcal{S}_2 (p + \widetilde{p} - 1, \min \{ \widetilde{r}, r,p-2 \}, h ).
\]
While the first variable in the function, i.e., $b^+$, describes the trace at the edge, the second variable, i.e., $b^-$, specifies the directional derivative in the direction of the approximate normal vector $\widetilde{\f n}_k$. Hence, we have for $\varphi_h|_{\Omega^{(k)}} = f^{(k)}_s [b^+,b^-] \circ (\f F^{(k)})^{-1}$ that
\begin{align}
  \varphi_h \circ \f F^{(k)} |_{{\widehat{E}}_s^{(k)}} &= b^+, \\
  \partial_{\widetilde{\f n}_k} \varphi_h \circ \f F^{(k)} |_{{\widehat{E}}_s^{(k)}} &= -b^-. 
\end{align}
The term $\partial_{\widetilde{\f n}_k} \varphi_h$ is an approximation of the normal derivative at the edge due to the definition of the approximated gluing data, see~\cite[Proposition~4]{WEINMULLER2021114017}. 
\begin{lem}
For the interface $I_\kappa$, if the pull-back of $\varphi_h$ is given by $f^{(k)}_{s_k}[b^+,b^-]$ on $\Omega^{(k)}$ and by $f^{(l)}_{s_l}[b^+,b^-]$ on $\Omega^{(l)}$, then using~\eqref{eq:approxnormalvector} and~\eqref{eq:taylorexpansion} gives us 
\begin{align*}
 \partial_{\widetilde{\f n}_k} \varphi_h|_{E_{s_k}^{(k)}} = - \partial_{\widetilde{\f n}_l} \varphi_h|_{E_{s_l}^{(l)}} . \label{eq:approxnormalderiv}
\end{align*}
Recall that $\widetilde{\f n}_k \approx {\f n}_k = - {\f n}_l \approx - \widetilde{\f n}_l$. Here we consider the restriction to $E_{s_k}^{(k)}$ to be the limit from the side $\Omega^{(k)}$, whereas the restriction to $E_{s_l}^{(l)}$ denotes the limit from $\Omega^{(l)}$.
\end{lem}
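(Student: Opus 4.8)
The plan is to verify the claimed identity by computing both sides explicitly from the definition~\eqref{eq:taylorexpansion} of the patch-local functions and the definition~\eqref{eq:approxnormalvector} of the approximate normal vectors, and then to observe that the structure of~\eqref{eq:taylorexpansion} was precisely engineered so that the two expressions cancel. First I would fix the interface $I_\kappa$ with $\kappa=(k,l)$ and, after applying the Euclidean motion $R_\kappa$ from Assumption~\ref{ass::c0conformityatinterface}, reduce to the representative situation of the Remark in Section~\ref{sec:normal-derivatives}, i.e.\ $s_k=4$, $s_l=1$, so that $\f F^{(k)}(0,t)=\f F^{(l)}(t,0)$ and $\f n_\kappa=\f n_k=-\f n_l$; the general case follows by the obvious relabeling of the parametric variables, exactly as the other edge representations $f^{(k)}_s[b^+,b^-]$ are defined ``equivalently''.

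Next I would compute $\partial_{\widetilde{\f n}_k}\varphi_h|_{E_{s_k}^{(k)}}$. By the chain rule identity $\nabla \f F^{(k)} \nabla_{\f x}\varphi = \nabla f^{(k)}$ together with~\eqref{eq:approxnormalvector}, the pull-back of the approximate normal derivative along $\widehat E_4^{(k)}$ equals $(\widetilde a_1^{(k)},\widetilde a_2^{(k)})\nabla f^{(k)}_4[b^+,b^-]$, which by the expressions for $\widetilde a_1^{(k)},\widetilde a_2^{(k)}$ is $-\frac{1}{\widetilde\alpha^{(\kappa,k)}}\big(\partial_u f^{(k)}_4[b^+,b^-](0,v) - \widetilde\beta^{(\kappa,k)}(v)\,\partial_v f^{(k)}_4[b^+,b^-](0,v)\big)$, exactly as in~\eqref{eq:normal-derivative-one-edge} but with the exact gluing data replaced by the approximated ones. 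I would then substitute~\eqref{eq:taylorexpansion}: using the open-knot normalization so that at $u=0$ one has $b_1(0)+b_2(0)=1$, $b_2(0)=0$, and using the standard B-spline derivative value $b_1'(0)+b_2'(0)=0$ while $\frac{h}{p}b_2'(0)=1$ (the scaling $h/p$ in~\eqref{eq:taylorexpansion} is chosen for precisely this), the trace reduces to $b^+(v)$ and the transversal derivative term picks out exactly $\widetilde\alpha^{(\kappa,k)}(v)b^-(v)+\widetilde\beta^{(\kappa,k)}(v)(b^+)'(v)$. Plugging these into the formula above, the $\widetilde\beta^{(\kappa,k)}(b^+)'$ contributions cancel against the $\widetilde\beta^{(\kappa,k)}\partial_v$ term, leaving $\partial_{\widetilde{\f n}_k}\varphi_h\circ\f F^{(k)}|_{\widehat E_4^{(k)}} = -b^-(v)$, which is just the already-stated trace property $\partial_{\widetilde{\f n}_k}\varphi_h\circ\f F^{(k)}|_{\widehat E_s^{(k)}} = -b^-$.

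The same computation on patch $\Omega^{(l)}$ for the edge $\widehat E_1^{(l)}$ ($v=0$) and the function $f^{(l)}_1[b^+,b^-]$ gives $\partial_{\widetilde{\f n}_l}\varphi_h\circ\f F^{(l)}|_{\widehat E_1^{(l)}} = -b^-(t)$ as well. Since $R_\kappa$ identifies the parametric variable $v$ on the $\Omega^{(k)}$ side with $t$ on the $\Omega^{(l)}$ side and $\varphi_h$ is taken with the same pair $[b^+,b^-]$ on both patches, the two restrictions, viewed as functions on the common curve $I_\kappa\subset\mathbb{R}^2$, are both equal to $-b^-$. Hence $\partial_{\widetilde{\f n}_k}\varphi_h|_{E_{s_k}^{(k)}} = -b^- = -\big(-(-b^-)\big) = -\,\partial_{\widetilde{\f n}_l}\varphi_h|_{E_{s_l}^{(l)}}$, where the sign flip in the middle is exactly the relation $\widetilde{\f n}_k = -\widetilde{\f n}_l$ built into the construction; note that no approximation is involved here — the identity is exact because both sides are expressed through the \emph{same} approximated gluing data, which is the whole point of the construction.

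The main obstacle, such as it is, is purely bookkeeping: one must be careful that the approximated gluing data $\widetilde\alpha^{(\kappa,k)},\widetilde\beta^{(\kappa,k)}$ entering $f^{(k)}_{s_k}$ and the data $\widetilde\alpha^{(\kappa,l)},\widetilde\beta^{(\kappa,l)}$ entering $f^{(l)}_{s_l}$ are consistently associated with the \emph{same} interface $I_\kappa$ (so that the common trace $b^+$ and its tangential derivative $(b^+)'$ are unambiguous), and that the Euclidean motion $R_\kappa$ is tracked correctly through the reduction to the canonical edge configuration, including the possible reflection which reverses orientation of the parametric interface coordinate. Once the normalization constants of the open-knot B-splines at the endpoint are pinned down and the role of the factor $h/p$ is made explicit, the cancellation is immediate and the lemma follows; no genuine analytic difficulty arises, since — unlike the exact $C^1$ condition~\eqref{eq:C1-one-interface}, which requires the true gluing data to satisfy the $G^1$ relation — here the matching is imposed by \emph{definition} through the shared $[b^+,b^-]$.
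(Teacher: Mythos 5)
Your computation on the canonical edge ($s_k=4$, endpoint values $b_1(0)=1$, $b_2(0)=0$, $b_1'(0)+b_2'(0)=0$, $\tfrac{h}{p}b_2'(0)=1$, cancellation of the $\widetilde\beta^{(\kappa,k)}(b^+)'$ terms) is correct and does give $\partial_{\widetilde{\f n}_k}\varphi_h\circ\f F^{(k)}|_{\widehat E_{s_k}^{(k)}}=-b^-$. The gap is in the very last step, which is the actual content of the lemma. You claim that the same computation on $\Omega^{(l)}$ yields $\partial_{\widetilde{\f n}_l}\varphi_h\circ\f F^{(l)}|_{\widehat E_{s_l}^{(l)}}=-b^-$ \emph{as well}, and then conclude $\partial_{\widetilde{\f n}_k}\varphi_h=-\partial_{\widetilde{\f n}_l}\varphi_h$ via the chain $-b^-=-(-(-b^-))=-\partial_{\widetilde{\f n}_l}\varphi_h$. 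These two statements are incompatible: if both one-sided derivatives (each taken with respect to its own patch's outward approximate normal) equal $-b^-$, then $\partial_{\widetilde{\f n}_k}\varphi_h=+\partial_{\widetilde{\f n}_l}\varphi_h$, which is not the lemma; since $\widetilde{\f n}_k\approx-\widetilde{\f n}_l$, it would mean the jump of the transversal derivative across $I_\kappa$ is approximately $2b^-$, i.e.\ no coupling at all. The ``sign flip built into the construction'' that you invoke in the middle of the chain is precisely what has to be proved, and it cannot be waved through: with both parametrizations regular (positive Jacobian) and the configuration $\f F^{(k)}(0,t)=\f F^{(l)}(t,0)$ of the Remark, the naive mirror of \eqref{eq:taylorexpansion} on $\Omega^{(l)}$ (swap $u\leftrightarrow v$, replace the gluing data by $\widetilde\alpha^{(\kappa,l)},\widetilde\beta^{(\kappa,l)}$ with the same sign) really does produce $-b^-$ on both sides and hence a discontinuous normal derivative — you can check this on two affine patches with constant gluing data, where Remark~\ref{rmk:exactC1smoothness} promises exact $C^1$.

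The missing piece is the orientation bookkeeping you explicitly deferred. The transversal datum $b^-$ is attached to one fixed transversal direction of the interface (say $\widetilde{\f n}_\kappa=\widetilde{\f n}_k\approx-\widetilde{\f n}_l$), so the ``equivalent'' representation $f^{(l)}_{s_l}[b^+,b^-]$ on the neighbouring patch must carry the opposite sign in front of the $\widetilde\alpha^{(\kappa,l)}b^-$ term relative to \eqref{eq:taylorexpansion} (this is exactly what the opposite ordering of the determinants defining $\alpha^{(\kappa,k)}$ and $\alpha^{(\kappa,l)}$, and the mirrored normal-derivative formula in the Remark of Section~\ref{sec:normal-derivatives}, encode). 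Carrying this through, the $\Omega^{(l)}$-side computation gives $\partial_{\widetilde{\f n}_l}\varphi_h\circ\f F^{(l)}|_{\widehat E_{s_l}^{(l)}}=+b^-$, and only then does the claimed identity $\partial_{\widetilde{\f n}_k}\varphi_h|_{E_{s_k}^{(k)}}=-\partial_{\widetilde{\f n}_l}\varphi_h|_{E_{s_l}^{(l)}}$ follow; the same care is needed when a reflection in $R_\kappa$ reverses the tangential parameter, since $b^+$, $(b^+)'$ and $b^-$ must then be composed with the reparametrization consistently on both sides. So the route is right (explicit evaluation of \eqref{eq:approxnormalvector} applied to \eqref{eq:taylorexpansion}), but the proof is incomplete until you state the neighbouring-edge representation with its orientation-dependent sign and redo the second computation with it, rather than asserting both sides give $-b^-$ and repairing the sign afterwards.
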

In other words, we impose an exact coupling of the approximate normal derivatives $\partial_{\widetilde{\f n}_k}$ and $\partial_{\widetilde{\f n}_l}$. To obtain exact $C^1$ smoothness we need $\partial_{\widetilde{\f n}_k} = \partial_{{\f n}_k}$, which is achieved by $\widetilde{\f n}_k = - \widetilde{\f n}_l = \f n$. For further discussions, see Remark~\ref{rmk:exactC1smoothness}. In the case of an boundary edge, we replace the approximate gluing data with $\alpha^{(k,s)} \equiv 1$ and $\beta^{(k,s)} \equiv 0$. 

\subsubsection{The patch interior basis functions}

The patch interior space is defined as
\begin{align}
\widehat{\mathcal{A}}_\circ^{(k)} = \text{span} \{ \boldsymbol{b}^{(k)}_{\boldsymbol{j}} : \boldsymbol{j} \in \mathcal{I}^{(k)}_{\circ} \} 
\end{align}
where
$\mathcal{I}^{(k)}_{\circ} = \{(i_1,i_2)\in\mathbb{Z}^2:3\leq i_1 \leq N-2, \; 3 \leq i_2 \leq N-2\}$
and $\{ \boldsymbol{b}^{(k)}_{\boldsymbol j}\}$ are the basis functions of the tensor-product B-spline space~$\boldsymbol{\mathcal{S}}^{(k)} = \boldsymbol{ \mathcal{S} } (\f p, \f r, \f h)$ of dimension $N \times N$, with $p \geq 2$ and $p-1 \geq r \geq 1$. In Figure~\ref{fig:interiorspace} an example is shown. 
\begin{lem}
Let $\boldsymbol{b}^{(k)}_{\boldsymbol{j}} \in \widehat{\mathcal{A}}_\circ^{(k)}$, then the isogeometric function $\varphi_h|_{\Omega^{(k)}} = \boldsymbol{b}^{(k)}_{\boldsymbol{j}} \circ (\f F^{(k)})^{-1}$ has vanishing traces and normal derivatives at all edges of $\Omega^{(k)}$, that is, 
\begin{align*}
  \varphi_h |_{\partial\Omega^{(k)}}  & = 0, \\
  \partial_{\widetilde{\f n}_k} \varphi_h |_{\partial\Omega^{(k)}}  & = 0.
\end{align*}
Moreover, the patch interior spline space satisfies $\widehat{\mathcal{A}}_\circ^{(k)} \subset C^1([0,1]^2)$.
\end{lem}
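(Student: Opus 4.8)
The plan is to reduce the whole statement to the endpoint behaviour of univariate B-splines on the open knot vector~\eqref{eq:knotvector}. The only fact needed is the standard one that, for such a knot vector, the B-spline $b_j$ of $\mathcal{S}(p,r,h)$ has a zero of order at least $j-1$ at the left endpoint and a zero of order at least $N-j$ at the right endpoint; in particular $b_j(0)=b_j'(0)=0$ whenever $j\geq 3$, and $b_j(1)=b_j'(1)=0$ whenever $j\leq N-2$. (This follows from the $(p+1)$-fold knot multiplicity at the two ends together with the derivative recursion for B-splines; only the orders $0$ and $1$ are needed here.) I would state this as the first step.

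Next I would transfer this to the tensor-product functions. Let $\boldsymbol{b}^{(k)}_{\boldsymbol{j}}(u,v)=b_{j_1}(u)\,b_{j_2}(v)$ with $(j_1,j_2)\in\mathcal{I}^{(k)}_{\circ}$, i.e.\ $3\leq j_1,j_2\leq N-2$. On the edge $\widehat{E}^{(k)}_4=\{u=0\}$ one has $\boldsymbol{b}^{(k)}_{\boldsymbol{j}}(0,v)=b_{j_1}(0)b_{j_2}(v)=0$, hence also $\partial_v\boldsymbol{b}^{(k)}_{\boldsymbol{j}}(0,v)=b_{j_1}(0)b_{j_2}'(v)=0$, while $\partial_u\boldsymbol{b}^{(k)}_{\boldsymbol{j}}(0,v)=b_{j_1}'(0)b_{j_2}(v)=0$; running the same computation on the other three edges (using $j_1\leq N-2$, $j_2\geq 3$, $j_2\leq N-2$) shows that $\boldsymbol{b}^{(k)}_{\boldsymbol{j}}$ together with its full parametric gradient vanishes on all of $\partial\widehat{\Omega}$. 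Since $\f F^{(k)}$ is a regular parametrization it extends to a homeomorphism of the closed square onto $\overline{\Omega^{(k)}}$ with $\partial\Omega^{(k)}=\f F^{(k)}(\partial\widehat{\Omega})$, so the identity $\varphi_h\circ\f F^{(k)}=\boldsymbol{b}^{(k)}_{\boldsymbol{j}}$ gives $\varphi_h|_{\partial\Omega^{(k)}}=0$ at once. For the normal derivative along any edge, formula~\eqref{eq:approxnormalvector} (and its analogues on the remaining edges, with the convention $\alpha\equiv1,\beta\equiv0$ on boundary edges) together with the chain rule exactly as in~\eqref{eq:normal-derivative-one-edge} expresses $\partial_{\widetilde{\f n}_k}\varphi_h\circ\f F^{(k)}$ restricted to that edge as $\widetilde{a}_1^{(k)}\,\partial_u\boldsymbol{b}^{(k)}_{\boldsymbol{j}}+\widetilde{a}_2^{(k)}\,\partial_v\boldsymbol{b}^{(k)}_{\boldsymbol{j}}$; both parametric partials vanish there and the coefficients $\widetilde{a}_1^{(k)},\widetilde{a}_2^{(k)}$ are finite (the approximated gluing datum $\widetilde{\alpha}^{(\kappa,k)}$ being a nonvanishing spline), whence $\partial_{\widetilde{\f n}_k}\varphi_h|_{\partial\Omega^{(k)}}=0$.

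For the inclusion $\widehat{\mathcal{A}}_\circ^{(k)}\subset C^1([0,1]^2)$ I would simply invoke the tensor-product structure and the standing assumption $p-1\geq r\geq 1$: each univariate factor lies in $\mathcal{S}(p,r,h)\subset C^r([0,1])\subset C^1([0,1])$, so $\boldsymbol{b}^{(k)}_{\boldsymbol{j}}=b_{j_1}\otimes b_{j_2}$ and its first partials $b_{j_1}'\otimes b_{j_2}$ and $b_{j_1}\otimes b_{j_2}'$ are products of continuous univariate functions, hence continuous on $[0,1]^2$ (in fact $\widehat{\mathcal{A}}_\circ^{(k)}\subset C^r([0,1]^2)$). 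Taking spans preserves this.

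There is no genuinely hard step: the lemma is essentially bookkeeping built on a classical property of B-splines on open knot vectors. The two points that warrant a careful sentence are (i) citing precisely the vanishing of $b_j$ and $b_j'$ at the endpoints for the index ranges occurring in $\mathcal{I}^{(k)}_{\circ}$, and (ii) making explicit that~\eqref{eq:approxnormalvector} reduces the (approximate) normal derivative on an edge to a linear combination of the two parametric partial derivatives, so that their simultaneous vanishing is what does the work — which is exactly why the definition of $\mathcal{I}^{(k)}_{\circ}$ excludes the first two and last two indices in each direction.
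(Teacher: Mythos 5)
Your proof is correct and is exactly the routine argument the paper leaves implicit (the lemma is stated there without proof): endpoint vanishing of $b_j$ and $b_j'$ for the index range $3\le j\le N-2$ on the open knot vector, tensorization to kill both parametric partials on $\partial\widehat{\Omega}$, and the chain-rule representation of the approximate normal derivative as $\widetilde{a}_1^{(k)}\partial_u+\widetilde{a}_2^{(k)}\partial_v$ applied to the pull-back, plus $r\ge 1$ for the $C^1([0,1]^2)$ inclusion. The one point deserving the word you gave it is the finiteness of $\widetilde{a}_1^{(k)},\widetilde{a}_2^{(k)}$, i.e.\ that $\widetilde{\alpha}^{(\kappa,k)}$ is nonvanishing, which is ensured because $\seminorm{\alpha^{(\kappa,k)}}\ge \underline{c}>0$ by regularity of $\f F^{(k)}$ and the projection $P_h$ preserves this for the meshes considered.
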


\begin{figure}
 \centering
  \begin{subfigure}[b]{0.32\textwidth}
  \includegraphics[width=\textwidth]{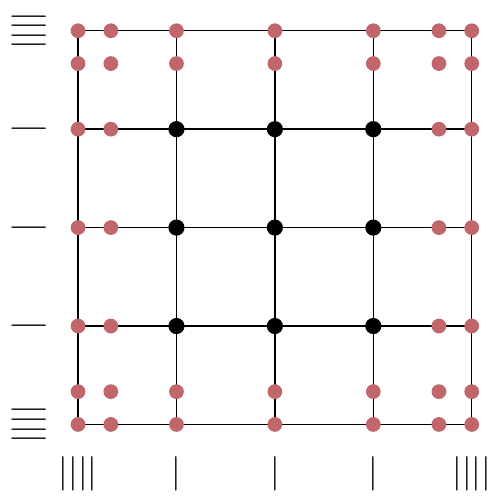}
  \subcaption{The dofs of the patch interior space.}
 \end{subfigure}
 \begin{subfigure}[b]{0.4\textwidth}
  \includegraphics[width=\textwidth]{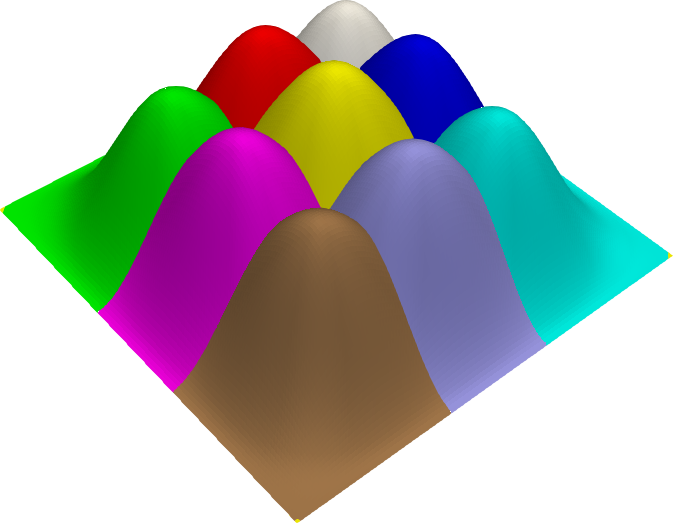}
  \subcaption{The basis functions of the patch interior space.}
 \end{subfigure}
 \caption{The patch interior basis: While the red dots represent the dofs that are eliminated, that is, the corresponding basis functions have non-zero function values or non-zero normal derivatives at the patch boundary, the black dots represent the dofs for the patch interior space.} \label{fig:interiorspace}
\end{figure}

\subsubsection{The edge basis functions}

Without loss of generality, let the edge $E_s^{(k)}$ be such that it corresponds to $u=0$, i.e., $s=4$. Any edge can be rotated and translated in such a way that it coincides with this configuration.

We define the space $\widehat{\mathcal{A}}_{E, s}^{(k)}$ as the span of those basis functions that have non-vanishing trace or approximate normal derivative along the interface and vanishing value, gradient and Hessian at both endpoints of the interface. More precisely, we have 
\begin{align}
\widehat{\mathcal{A}}_{E,s}^{(k)} = \widehat{\mathcal{A}}_{E,s,+}^{(k)} \oplus \widehat{\mathcal{A}}_{E,s,-}^{(k)}, 
\end{align}
with
\begin{align*}
\widehat{\mathcal{A}}_{E,s,+}^{(k)} = \text{span} \{ f^{(k)}_{s} [b_j^+, 0] : 4 \leq j \leq N_+ - 4 \} \quad\mbox{ and } \quad \widehat{\mathcal{A}}_{E,s,-}^{(k)} = \text{span} \{ f^{(k)}_{s} [0, b_j^-] : 3 \leq j \leq N_- -3 \},
\end{align*}
where $f^{(k)}_{s} [\cdot,\cdot]$ is defined as in~\eqref{eq:taylorexpansion}, $\{b_i^+\}$ and $\{b_j^-\}$, $N_+$ and $N_-$ are the bases and dimensions of the spaces $\mathcal{S}^+$ and $\mathcal{S}^-$, respectively. In Figure~\ref{fig:edgespace} we give an example of the edge space.

\begin{lem}
Let $f^{(k)}_{s} [b_j^+, 0] \in \widehat{\mathcal{A}}_{E,s,+}^{(k)}$, then the isogeometric function $\varphi_h|_{\Omega^{(k)}} = f^{(k)}_{s} [b_j^+, 0] \circ (\f F^{(k)})^{-1}$ satisfies 
\begin{align*}
  \varphi_h \circ \f F^{(k)} |_{{\widehat{E}}_s^{(k)}}  & = b_j^+, \\
  \partial_{\widetilde{\f n}_k} \varphi_h \circ \f F^{(k)} |_{{\widehat{E}}_s^{(k)}} &= 0,
\end{align*}
for $f^{(k)}_{s} [0,b_j^-] \in \widehat{\mathcal{A}}_{E,s,-}^{(k)}$ the corresponding isogeometric function satisfies
\begin{align*}
  \varphi_h \circ \f F^{(k)} |_{{\widehat{E}}_s^{(k)}} &= 0, \\
  \partial_{\widetilde{\f n}_k} \varphi_h \circ \f F^{(k)} |_{{\widehat{E}}_s^{(k)}} &= - b_j^-.
\end{align*}
Moreover, all functions $\varphi_h|_{\Omega^{(k)}} = f_h \circ (\f F^{(k)})^{-1}$, with $f_h \in \widehat{\mathcal{A}}_{E,s}^{(k)}$, satisfy
\begin{align*}
  \varphi_h|_{V_s^{(k)}} &= 0, \\
  \nabla \varphi_h|_{V_s^{(k)}} &= \boldsymbol{0}, \\
  \text{Hess} ( \varphi_h )|_{V_s^{(k)}} &= 0_{22},
\end{align*}
for all vertices $V_s^{(k)}$, $s = 1,...,4$.
\end{lem}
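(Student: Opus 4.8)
The plan is to carry everything out in the parameter domain and transfer to physical space only at the end. By the rotation/translation reduction stated in the text it suffices to treat $s=4$, so that $\widehat E_4^{(k)}=\{0\}\times(0,1)$ has endpoints $\widehat V_1=(0,0)$ and $\widehat V_4=(0,1)$, while the remaining two patch vertices are $\widehat V_2=(1,0)$ and $\widehat V_3=(1,1)$. I would first record the B-spline boundary data that is needed. In the $u$-direction, B\'ezier extraction on the first knot span (equivalently, the $(p+1)$-fold knot at $0$) gives $b_1(0)=1$, $b_2(0)=0$, $(b_1+b_2)'(0)=0$, $b_2'(0)=p/h$, together with $\mathrm{supp}\,b_1,\mathrm{supp}\,b_2\subseteq[0,2h]$. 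In the $v$-direction, $b_j^+\in\mathcal S^+=\mathcal S(p,p-1,h)$ vanishes to order at least $j-1$ at $v=0$ and to order at least $N_+-j$ at $v=1$ (identically in a neighbourhood of an endpoint once its support detaches from it), and likewise for $b_j^-\in\mathcal S^-=\mathcal S(p-1,p-2,h)$. Hence for the admissible ranges $4\le j\le N_+-4$ the spline $b_j^+$ vanishes to order at least $3$ at both ends of $[0,1]$, and for $3\le j\le N_--3$ the spline $b_j^-$ vanishes to order at least $2$ at both ends; this is precisely the slack built into the index ranges defining $\widehat{\mathcal A}_{E,4,\pm}^{(k)}$.

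For the trace and the approximate normal derivative, I would substitute the $u$-direction data into~\eqref{eq:taylorexpansion} to obtain, along $u=0$, $f^{(k)}_4[b^+,b^-](0,v)=b^+(v)$, $\partial_u f^{(k)}_4[b^+,b^-](0,v)=\widetilde\alpha^{(\kappa,k)}(v)b^-(v)+\widetilde\beta^{(\kappa,k)}(v)(b^+)'(v)$ and $\partial_v f^{(k)}_4[b^+,b^-](0,v)=(b^+)'(v)$. Feeding these into the approximate normal-derivative formula (the analogue of~\eqref{eq:normal-derivative-one-edge} built from~\eqref{eq:approxnormalvector}), the $\widetilde\beta^{(\kappa,k)}(b^+)'$ contributions cancel and one is left with $-b^-(v)$. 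The first two displayed pairs of identities are then the specializations of this to $b^-=0$ and to $b^+=0$; indeed they are exactly the pair of identities already stated for general $f^{(k)}_s[b^+,b^-]$ immediately before the lemma.

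For the vertex identities I would distinguish the vertices lying on $\widehat E_4^{(k)}$ from those off it. At $\widehat V_2,\widehat V_3$ (where $u=1$): by~\eqref{eq:taylorexpansion} every $f_h\in\widehat{\mathcal A}_{E,4}^{(k)}$ is a combination of products of $v$-splines with $b_1(u)$ and $b_2(u)$, and since $\mathrm{supp}\,b_1,\mathrm{supp}\,b_2\subseteq[0,2h]\subsetneq[0,1)$ the function $f_h$ vanishes identically in a neighbourhood of $u=1$, so all its derivatives vanish there. At $\widehat V_1=(0,0)$ (and at $\widehat V_4=(0,1)$ by the symmetric argument at the right end): writing $f_h=b^+(v)(b_1(u)+b_2(u))+\chi(v)\,\tfrac{h}{p}\,b_2(u)$ with $\chi=\widetilde\alpha^{(\kappa,k)}b^-+\widetilde\beta^{(\kappa,k)}(b^+)'$, and evaluating $f_h$ and its partial derivatives up to order two at $(0,0)$ with the help of the $u$-direction data, each of these six quantities reduces to a linear combination of $b^+(0),(b^+)'(0),(b^+)''(0),\chi(0),\chi'(0)$. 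By the order-of-vanishing bounds $b^+(0)=(b^+)'(0)=(b^+)''(0)=0$ and $b^-(0)=(b^-)'(0)=0$, and since $\widetilde\alpha^{(\kappa,k)}$ and $\widetilde\beta^{(\kappa,k)}$ are $C^1$ near the endpoint (being splines, or the constants $1$ and $0$ on a boundary edge) also $\chi(0)=\chi'(0)=0$; thus value, gradient and Hessian of $f_h$ vanish at the parametric vertex. Composing with $(\f F^{(k)})^{-1}$, which is smooth and regular, transfers this to $\varphi_h$ at the physical vertex, which is the assertion.

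None of these steps is deep; the one point deserving care is aligning the endpoint order-of-vanishing bookkeeping with the precise index ranges. In particular one must notice that $b^+$ has to be annihilated to \emph{second} order at the endpoints — because it enters $f_h$ both directly and through $\chi$ and $\chi'$ — whereas $b^-$ need only be annihilated to \emph{first} order, since it enters only through $\chi$; this asymmetry is exactly why the admissible ranges for $b_j^+$ and $b_j^-$ differ by one, and it is the thing I would be most careful to get right.
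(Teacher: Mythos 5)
Your proposal is correct and is essentially the argument the paper leaves implicit: the paper states this lemma without proof, as following ``by construction'' from the expansion~\eqref{eq:taylorexpansion} (whose trace and approximate-normal-derivative identities are displayed just before the lemma) together with the endpoint behaviour of the B-splines $b_1,b_2$ in the transversal direction and the order-of-vanishing of $b_j^{+}$, $b_j^{-}$ encoded in the index ranges $4\leq j\leq N_+-4$ and $3\leq j\leq N_--3$. Your computation of the six parametric $C^2$-data at the edge endpoints in terms of $b^+(0),(b^+)'(0),(b^+)''(0),\chi(0),\chi'(0)$, and the observation that the off-edge vertices are handled by the support of $b_1,b_2$, correctly fill in exactly those omitted details, including the asymmetry between the ranges for $b_j^+$ and $b_j^-$.
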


\begin{figure}
 \centering
  \begin{subfigure}[b]{0.32\textwidth}
  \includegraphics[width=\textwidth]{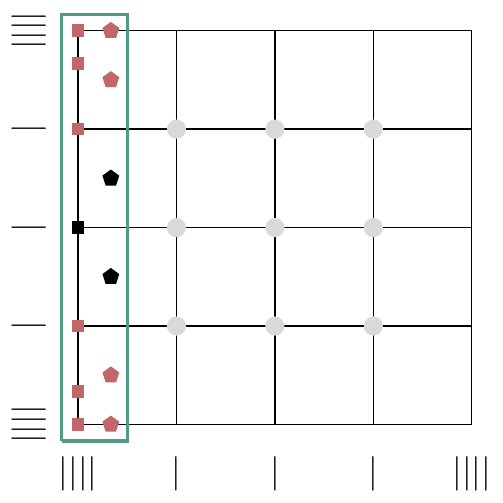}
  \subcaption{The edge basis function.}
 \end{subfigure}
 \begin{subfigure}[b]{0.22\textwidth}
  \includegraphics[width=\textwidth]{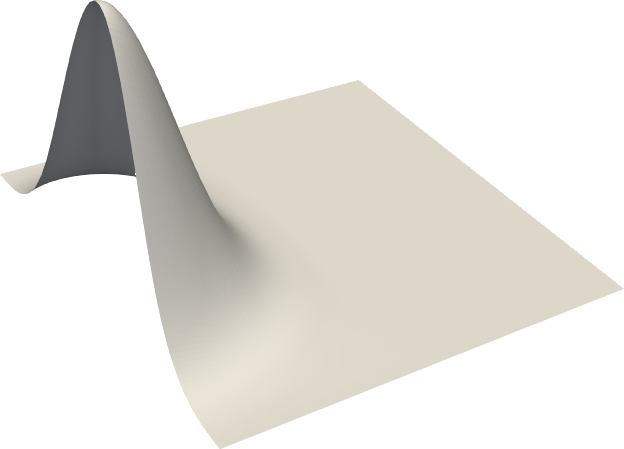}
  \subcaption{$f^{(k)}_{s} [b_4^+, 0]$}
 \end{subfigure}
 \begin{subfigure}[b]{0.22\textwidth}
  \includegraphics[width=\textwidth]{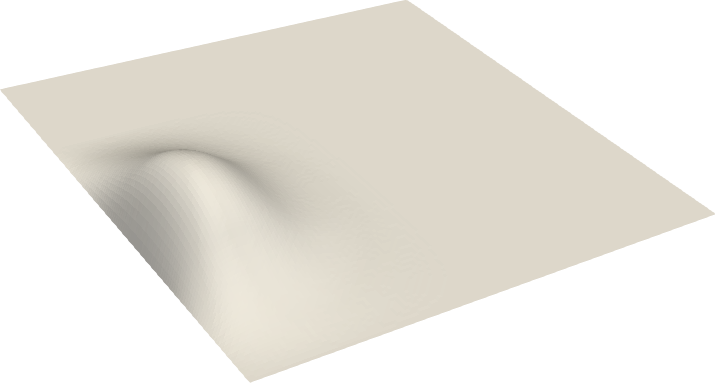}
  \subcaption{$f^{(k)}_{s} [0, b_3^-]$}
 \end{subfigure}
  \begin{subfigure}[b]{0.22\textwidth}
  \includegraphics[width=\textwidth]{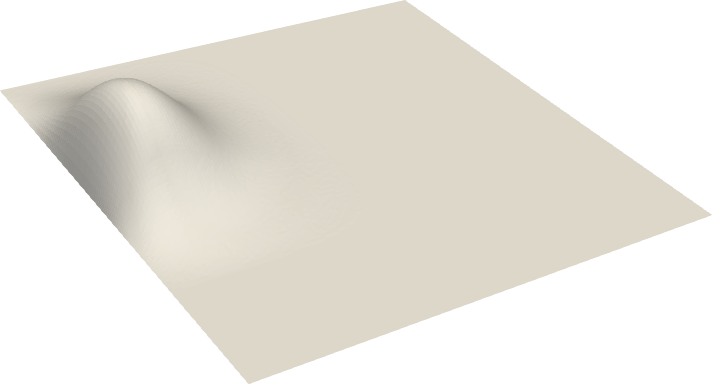}
  \subcaption{$f^{(k)}_{s} [0, b_4^-]$}
 \end{subfigure}
 \caption{We consider here the edge with $u=0$. For each edge we obtain seven trace basis functions depicted with a square and six normal derivative basis functions represented by a pentagon. Then the edge basis functions which have influence up to order two at the vertices are eliminated (marked in red). The remaining three basis functions span the edge space and are shown in Subfigures 4b-4d.} \label{fig:edgespace}
\end{figure}

\subsubsection{The vertex basis functions}

For simplicity of the notation, we assume that the vertex is at $\boldsymbol{x} = V^{(k)}_1 = \f F^{(k)} (0,0)$. Then we collect all the edge basis functions, which have non-vanishing $C^2$-data on one of the two adjacent edges, i.e., which fulfill one of the following conditions
\begin{align*}
  \varphi_h |_{V_1^{(k)}} &\neq 0, \\
  \nabla \varphi_h |_{V_1^{(k)}} &\neq \boldsymbol{0}, \\
  \text{Hess} ( \varphi_h ) |_{V_1^{(k)}} &\neq 0_{22}.
\end{align*}
More precisely, we define three sets of basis functions
\[
 \mathcal{B}_{b.e.} = \{ f_1^{(k)} [b^+_1,0],f_1^{(k)} [b^+_2,0],f_1^{(k)} [b^+_3,0],f_1^{(k)} [0,b^-_1],f_1^{(k)} [0,b^-_2], {\f b}^{(k)}_{(1,3)} \},
\]
corresponding to the bottom edge (with $v=0$),
\[
 \mathcal{B}_{l.e.} = \{ f_4^{(k)} [b^+_1,0],f_4^{(k)} [b^+_2,0],f_4^{(k)} [b^+_3,0],f_4^{(k)} [0,b^-_1],f_4^{(k)} [0,b^-_2], {\f b}^{(k)}_{(3,1)} \},
\]
corresponding to the left edge (with $u=0$), as well as 
\[
 \mathcal{B}_{c.t.} = \{ {\f b}^{(k)}_{(1,1)},{\f b}^{(k)}_{(1,2)},{\f b}^{(k)}_{(1,3)},{\f b}^{(k)}_{(2,1)},{\f b}^{(k)}_{(2,2)}, {\f b}^{(k)}_{(3,1)} \},
\]
which are standard tensor-product B-splines used for constructing a correction term.

To construct the vertex space we perform $C^2$ interpolation at the vertex for all three sets of functions. To do so, we prescribe $C^2$-data $\Phi = \{\Phi_{i_1,i_2}\}_{i_1,i_2}$, with $1\leq i_1, i_2 \leq 3$ and $i_1 + i_2 \leq 4$, in physical space and interpolate the pull-backs using the three spaces defined above. We then add the first two interpolations and subtract the third. The resulting functions are denoted by $g_1^{(k)} [\Phi_{i_1, i_2}]$. We refer to~\ref{sec:appendix-C2-interpolation} for details of the construction. We have by construction
\[
g_s^{(k)} [\Phi_{i_1, i_2}] \in \mathcal{S} (p + \widetilde{p} - 1, \min \{ \widetilde{r}, r,p-2 \}, h ) \otimes \mathcal{S} (p + \widetilde{p} - 1, \min \{ \widetilde{r}, r,p-2 \}, h ).
\]
The space for the vertex $V_s^{(k)}$ is defined as
\begin{align}
\widehat{\mathcal{A}}_{V,s}^{(k)} = \text{span} \{ g_s^{(k)} [\Phi_{i_1, i_2}] : (i_1,i_2) \in \mathcal{I}_{V} \} 
\end{align}
where $\mathcal{I}_{V} = \{ (i_1,i_2)\in\mathbb{Z}^2 : 1\leq i_1, i_2 \leq 3, \text{ and } i_1 + i_2 \leq 4 \}$. Thus, the dimension of the space $\widehat{\mathcal{A}}_{V,s}^{(k)}$ is always six. Figure~\ref{fig:vertexspace} shows an example for the vertex space.

\begin{figure}[h!]
 \centering
  \begin{subfigure}[b]{0.32\textwidth}
  \includegraphics[width=\textwidth]{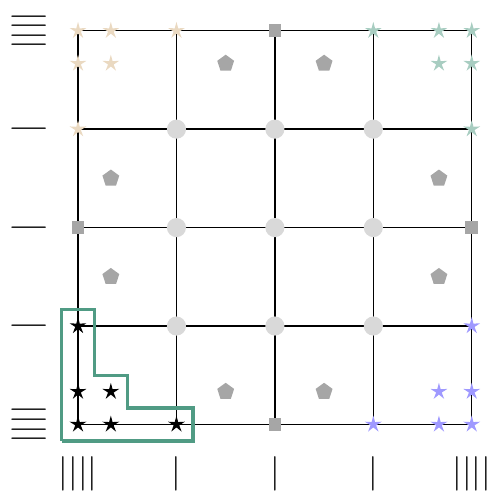}
  \subcaption{The dofs of the vertex basis functions.}
 \end{subfigure}
 \\
 \begin{subfigure}[b]{0.16\textwidth}
  \includegraphics[width=\textwidth]{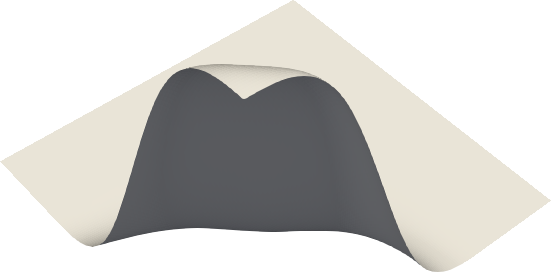}
  \subcaption{$g_s^{(k)} [\Phi_{1, 1}^{(k)}]$} \label{subfig:vertexbasis1}
 \end{subfigure}
 \begin{subfigure}[b]{0.16\textwidth}
  \includegraphics[width=\textwidth]{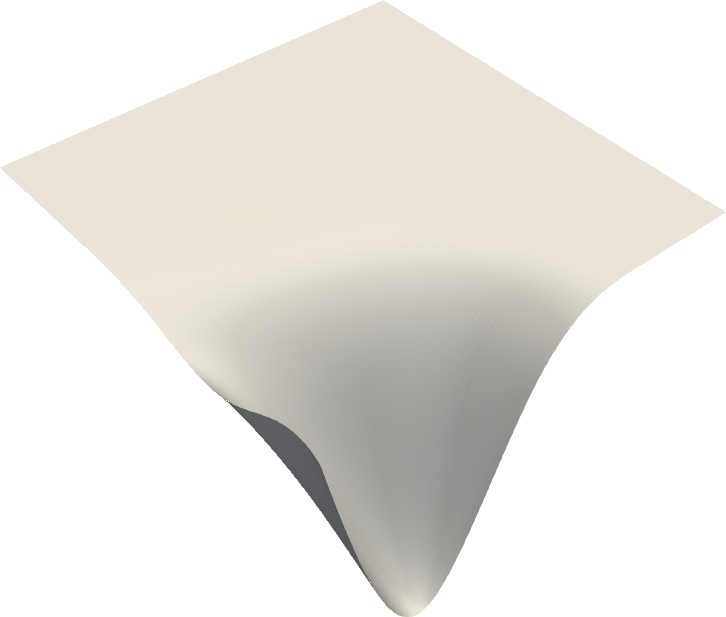}
  \subcaption{$g_s^{(k)} [\Phi_{2,1}^{(k)}]$}
 \end{subfigure}
  \begin{subfigure}[b]{0.16\textwidth}
  \includegraphics[width=\textwidth]{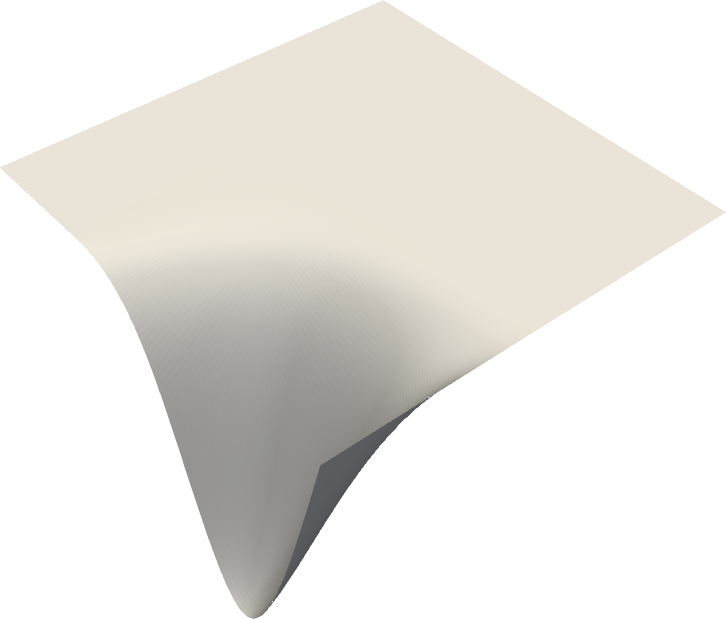}
  \subcaption{$g_s^{(k)} [\Phi_{1,2}^{(k)}]$}
 \end{subfigure}
  \begin{subfigure}[b]{0.16\textwidth}
  \includegraphics[width=\textwidth]{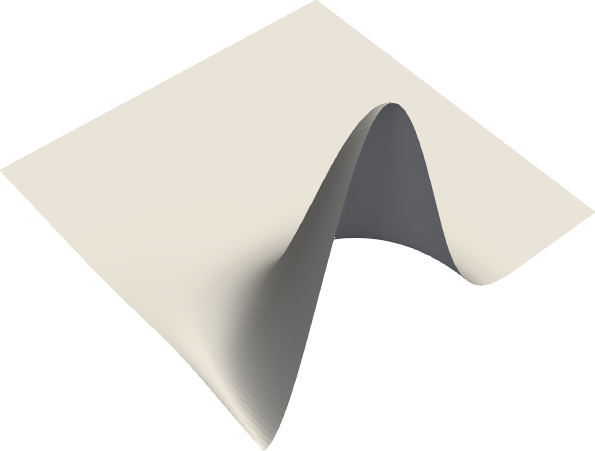}
  \subcaption{$g_s^{(k)} [\Phi_{3, 1}^{(k)}]$}
 \end{subfigure}
 \begin{subfigure}[b]{0.16\textwidth}
  \includegraphics[width=\textwidth]{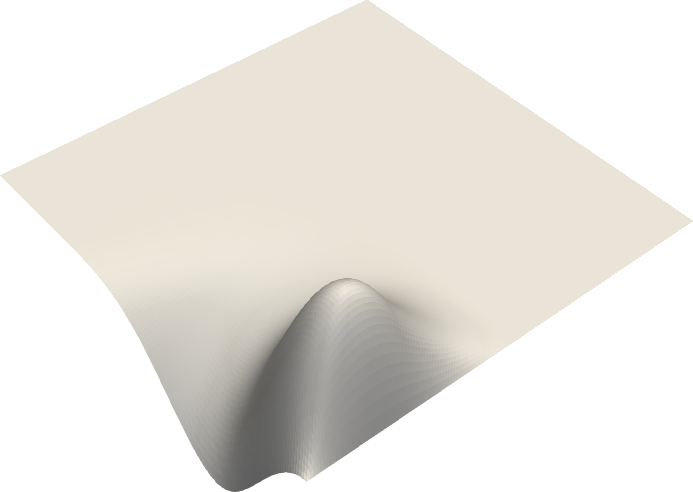}
  \subcaption{$g_s^{(k)} [\Phi_{2, 2}^{(k)}]$}
 \end{subfigure}
  \begin{subfigure}[b]{0.16\textwidth}
  \includegraphics[width=\textwidth]{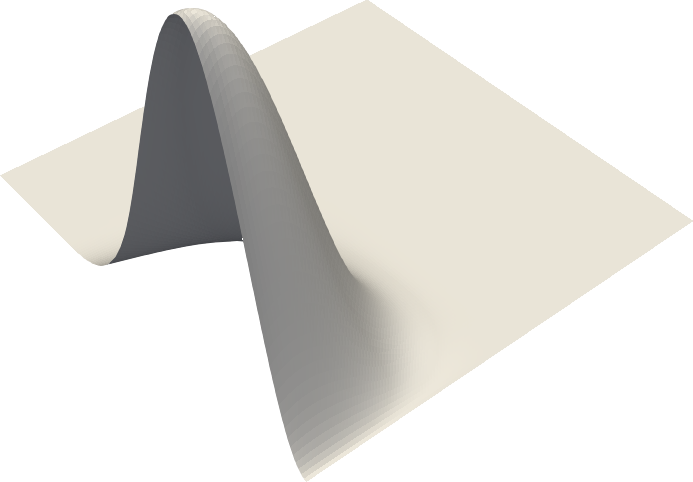}
  \subcaption{$g_s^{(k)} [\Phi_{1,3}^{(k)}]$} \label{subfig:vertexbasis6}
 \end{subfigure}
 \caption{As an example the basis functions at the vertex $u=v=0$ are shown in Subfigures~\ref{subfig:vertexbasis1}-\ref{subfig:vertexbasis6}. Those basis functions are computed by the $C^2$-interpolation at the vertex.} \label{fig:vertexspace}
\end{figure}

\subsection{Construction of the global space} \label{sec:coc1coupling}

In this subsection we first describe the coupling conditions. Then, the coupling conditions are used to connect the local (patch-wise) spaces to define the global space $\widetilde{\mathcal{V}}_{h}$. Considering one interface $I_\kappa$ between patches $\Omega^{(k)}$ and $\Omega^{(l)}$, we assume for all isogeometric functions $\varphi_h\in \widetilde{\mathcal{V}}_{h}$ that 
\begin{align}
    \varphi_h|_{E_{s_k}^{(k)}} &= \varphi_h|_{E_{s_l}^{(l)}}, \label{eq:c0condition} \\
  \partial_{\widetilde{\f n}_k} \varphi_h|_{E_{s_k}^{(k)}} &= -\partial_{\widetilde{\f n}_l} \varphi_h|_{E_{s_l}^{(l)}}. \label{eq:c1condition}
\end{align}
Moreover, we assume that for each vertex $V_\iota$ the functions $\varphi_h\in \widetilde{\mathcal{V}}_{h}$ are $C^2$-smooth at the vertex, that is, the limit of the function value, gradient and Hessian is the same on all patches sharing the vertex $V_\iota$.
\begin{rmk}
We recall the estimate from \cite[Theorem~1]{WEINMULLER2021114017}, yielding the bound
\begin{align}
  \norm{\jump{\partial_{\f n} \varphi_h}}_{L^2 (I_\kappa)} \leq C h_2^{\widetilde{p}+1} \left( \norm{\varphi_h}_{H^2(\Omega^{(k)})} + \norm{\varphi_h}_{H^2(\Omega^{(l)})} \right)^{1/2}, \label{eq:jumpnormalderivative}
\end{align}
where $\varphi_h$ is defined satisfying~\eqref{eq:c0condition}-\eqref{eq:c1condition}. Here $C>0$ depends on the geometry, but not on the mesh size. Therefore, the $C^1$ error depends on the choice of the polynomial degree for the approximate gluing data. We also observe that higher polynomial degree and/or lower regularity for the approximated gluing data does not lead to better results in the numerical experiments, see~\cite{WEINMULLER2021114017}.
\end{rmk}

Let
\[
 \widetilde{\mathcal{V}}_{h} = \left( \bigoplus_{k \in \mathcal{M}_P} \mathcal{A}_\circ^{(k)} \right) \oplus \left( \bigoplus_{\kappa \in \mathcal{M}_I} \mathcal{A}_{I_\kappa} \right) \oplus \left( \bigoplus_{\iota \in \mathcal{M}_V} \mathcal{A}_{V_\iota} \right) \oplus \left( \bigoplus_{\sigma \in \mathcal{M}_E} \mathcal{A}_{B_\sigma} \right),
\]
where the patch interior spaces are defined as
\[
 \mathcal{A}_\circ^{(k)} = \{ \varphi_h \in C^0(\Omega) : \varphi_h \circ \f F^{(k)} \in \widehat{\mathcal{A}}_\circ^{(k)} \mbox{ and }\varphi_h \circ \f F^{(l)} \equiv 0 \mbox{ for all }l\neq k \},
\]
the interface spaces as 
\[
 \mathcal{A}_{I_\kappa} = 
 \left\{ \varphi_h \in C^0(\Omega) : 
 \begin{array}{l}
  \varphi_h \circ \f F^{(k)} \in \widehat{\mathcal{A}}_{E,s_k}^{(k)}, \\
  \varphi_h \circ \f F^{(l)} \in \widehat{\mathcal{A}}_{E,s_l}^{(l)}, \\ 
  \varphi_h \circ \f F^{(m)} \equiv 0 \mbox{ for all }m\notin \{k,l\} \mbox{ and } \\
  \varphi_h \mbox{ satisfies \eqref{eq:c1condition} for }I_\kappa
 \end{array}
 \right\},
\]
the vertex spaces as 
\[
 \mathcal{A}_{V_\iota} = 
 \left\{ \varphi_h \in C^0(\Omega) : 
 \begin{array}{l}
  \varphi_h \circ \f F^{(k_i)} \in \widehat{\mathcal{A}}_{V,s_{k_i}}^{(k_i)}, \mbox{ for all }i=1,\ldots,\nu \\
  \varphi_h \circ \f F^{(l)} \equiv 0 \mbox{ for all }l\notin \{k_1,\ldots,k_\nu\} \mbox{ and } \\
  \varphi_h \mbox{ is $C^2$ at }V_\iota
 \end{array}
 \right\},
\]
and the boundary edge spaces as 
\[
 \mathcal{A}_{B_\sigma} = 
 \left\{ \varphi_h \in C^0(\Omega) : 
 \begin{array}{l}
  \varphi_h \circ \f F^{(k)} \in \widehat{\mathcal{A}}_{E,s_k}^{(k)}, \\
  \varphi_h \circ \f F^{(l)} \equiv 0 \mbox{ for }l\neq k
 \end{array}
 \right\}.
\]
A basis for the global space can be derived immediately from the local bases. For patch interior and boundary edge spaces no coupling is needed, thus the patch-local basis functions are also global basis functions. For each interface there is a direct one-to-one correspondence between basis functions on each side, that is, $f^{(k)}_{s_k} [b_j^+, 0]$ is coupled with $f^{(l)}_{s_l} [b_j^+, 0]$ and $f^{(k)}_{s_k} [0,b_j^-]$ is coupled with $f^{(l)}_{s_l} [0,b_j^-]$. The resulting basis functions are denoted with ${E}_{\kappa} [b^+, b^-]$. Similarly, the vertex basis functions $V_{\iota} [\Phi_{i_1,i_2}]$ are coupled due to the $C^2$-interpolation conditions, that is, the functions $g_{s_{k_1}}^{(k_1)} [\Phi_{i_1, i_2}^{(k_1)}]$, $g_{s_{k_2}}^{(k_2)} [\Phi_{i_1, i_2}^{(k_2)}]$, \ldots, $g_{s_{k_\nu}}^{(k_\nu)} [\Phi_{i_1, i_2}^{(k_\nu)}]$ are coupled for each index pair $(i_1,i_2) \in \mathcal{I}_V$. In Figure~\ref{fig:globalspace} an example visualizing edge and vertex basis functions is given. Since the structure of the construction is similar to the AS-$G^1$ construction in, for example,~\cite{kapl2017dimension, kapl2019argyris} we obtain linear independent basis functions for the space $\widetilde{\mathcal{V}}_{h}$ as stated in the following lemma.
\begin{lem}
 The space $\widetilde{\mathcal{V}}_{h}$ is the direct sum of its subspaces $\mathcal{A}_\circ^{(k)}$, $\mathcal{A}_{I_\kappa}$, $\mathcal{A}_{V_\iota}$ and $\mathcal{A}_{B_\sigma}$. Moreover, the coupling described above yields a basis for each of the subspaces, which in turn yields a global basis. 
\end{lem}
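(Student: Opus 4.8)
The plan is to verify the direct-sum decomposition of $\widetilde{\mathcal{V}}_h$ first, and then to argue that within each summand the coupled basis functions are linearly independent. For the direct-sum claim, I would observe that the four families of summands are distinguished by their behaviour on the nine patch-local subspaces introduced in Subsection~\ref{sec:theninesubspaces}. Concretely, a function in $\mathcal{A}_\circ^{(k)}$ has a pull-back in $\widehat{\mathcal{A}}_\circ^{(k)}$ on patch $k$ and vanishes identically on every other patch; by the interior-space lemma its trace and approximate normal derivative vanish on all of $\partial\Omega^{(k)}$, so it contributes nothing to any edge or vertex data. A function in an interface space $\mathcal{A}_{I_\kappa}$ or a boundary-edge space $\mathcal{A}_{B_\sigma}$ has pull-backs in the edge subspaces $\widehat{\mathcal{A}}_{E,s}^{(k)}$, which by the edge-space lemma carry nontrivial trace/normal-derivative data along exactly one edge but have vanishing value, gradient and Hessian at every vertex. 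A function in a vertex space $\mathcal{A}_{V_\iota}$ is built from the $g_s^{(k)}[\Phi_{i_1,i_2}]$, which are precisely the combinations carrying nontrivial $C^2$-data at a single vertex. Thus the trace/normal-derivative data on the interior of each edge, and the $C^2$-jet at each vertex, form a complete set of "coordinates" that separates the four families; a sum of functions from the four families that vanishes identically forces each family's contribution to vanish, and within the edge family the one-to-one interface correspondence forces the contributions to different interfaces and boundary edges to vanish separately. This gives the direct sum.

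For the basis statement, it remains to show that the coupling produces linearly independent functions in each subspace. For $\mathcal{A}_\circ^{(k)}$ this is immediate: the $\boldsymbol{b}^{(k)}_{\boldsymbol{j}}$, $\boldsymbol{j}\in\mathcal{I}^{(k)}_\circ$, are a subset of the tensor-product B-spline basis, hence linearly independent, and no coupling is performed. For $\mathcal{A}_{B_\sigma}$ the functions $f^{(k)}_{s_k}[b_j^+,0]$ and $f^{(k)}_{s_k}[0,b_j^-]$ are linearly independent because, by the edge-space lemma, the map sending such a function to the pair $(\text{trace},\text{approximate normal derivative})=(b_j^+,0)$ or $(0,-b_j^-)$ along $\widehat E_{s_k}^{(k)}$ is linear and injective on $\widehat{\mathcal{A}}_{E,s_k}^{(k)}$, and the $\{b_j^+\}$ and $\{b_j^-\}$ are themselves linearly independent in $\mathcal{S}^+$ and $\mathcal{S}^-$. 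For $\mathcal{A}_{I_\kappa}$ the same argument applies: a coupled basis function $E_\kappa[b^+,b^-]$ is determined on each side by the pair $(b^+,b^-)$, and a vanishing linear combination $\sum c_j E_\kappa[b_j^+,0] + \sum d_j E_\kappa[0,b_j^-]=0$ restricts on patch $k$ to $f^{(k)}_{s_k}[\sum c_j b_j^+, \sum d_j b_j^-]=0$, which forces all $c_j=d_j=0$. For the vertex spaces $\mathcal{A}_{V_\iota}$ one uses that, by the $C^2$-interpolation construction in \ref{sec:appendix-C2-interpolation}, the prescribed $C^2$-data $\Phi_{i_1,i_2}$, $(i_1,i_2)\in\mathcal{I}_V$, are reproduced at the vertex; since the six data functionals (value, two first derivatives, three second derivatives) are linearly independent, the six functions $V_\iota[\Phi_{i_1,i_2}]$ are linearly independent. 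Finally, linear independence across the summands follows from the direct-sum decomposition already established, so the union of the per-subspace bases is a basis of $\widetilde{\mathcal{V}}_h$.

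I expect the main obstacle to be the bookkeeping at the vertices, where the correction-term construction with the three sets $\mathcal{B}_{b.e.}$, $\mathcal{B}_{l.e.}$, $\mathcal{B}_{c.t.}$ overlaps with the edge subspaces: one must check carefully that after the $C^2$-interpolation the resulting $g_s^{(k)}[\Phi_{i_1,i_2}]$ are genuinely new functions (not already in $\widehat{\mathcal{A}}_\circ^{(k)}\oplus\widehat{\mathcal{A}}_{E,\cdot}^{(k)}$) and that the subtraction of the correction term does not accidentally cancel the interpolation data — this is exactly where the "$C^2$-data at the vertex" coordinate is used to separate $\mathcal{A}_{V_\iota}$ from the edge and interior families. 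The remaining work — the edge and interior linear independence — is a routine consequence of the injectivity statements in the earlier lemmas and the linear independence of univariate B-splines, and the argument is structurally identical to the AS-$G^1$ constructions of \cite{kapl2017dimension,kapl2019argyris}, so I would keep that part brief and cite the analogy.
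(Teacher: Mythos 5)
Your outline is correct, but note that the paper does not actually prove this lemma: it only remarks that the construction is structurally analogous to the AS-$G^1$ constructions and points to \cite{kapl2017dimension,kapl2019argyris}. What you wrote is essentially the argument that citation stands in for --- separating the four families by the data they carry (trace and approximate normal derivative along the interiors of edges, $C^2$-jet at vertices) and then using the injectivity of the data maps within each family --- so your version is more self-contained than the paper's. If you write it out, two details deserve explicit care beyond what you state. First, your separation step needs the patch-interior functions to have vanishing $C^2$-jets at the vertices, while the interior lemma in the paper only asserts vanishing traces and approximate normal derivatives along the edges; this does hold, because the index restriction $3\leq i_1,i_2\leq N-2$ forces every value, first and second derivative of $\boldsymbol{b}^{(k)}_{\boldsymbol{j}}$ at a corner to contain a factor $b_{i}(0)$ or $b_{i}'(0)$ with $i\geq 3$, but it should be said. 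Second, the injectivity of $(b^+,b^-)\mapsto f^{(k)}_{s}[b^+,b^-]$ through the trace/transversal data implicitly uses that $\widetilde{\alpha}^{(\kappa,k)}$ does not vanish along the interface (inherited from the regularity of the parametrization and the accuracy of the projection $P_h$), and the statement that the vertex functions reproduce the prescribed $C^2$-data is precisely the defining property of $\Pi_1^{(k)}=\Pi_{b.e.}+\Pi_{l.e.}-\Pi_{c.t.}$ from \ref{sec:appendix-C2-interpolation}; one also needs that a vertex function has vanishing jet at the other vertices of its patches, which holds since it is built from B-splines with indices adjacent to the corner (for sufficiently fine meshes). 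With these points made explicit, your argument closes the gap the paper leaves to the cited references.
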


\begin{figure}
 \centering
  \begin{subfigure}[b]{0.32\textwidth}
    \resizebox{\textwidth}{!}{
    \includegraphics[width=\textwidth]{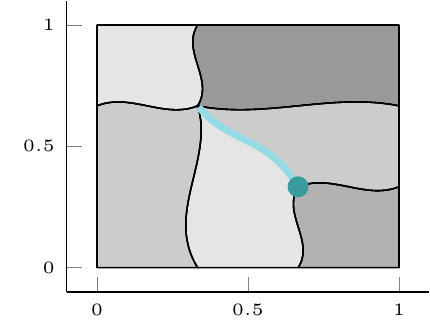}
}
  \subcaption{The chosen geometry example.}
 \end{subfigure}
 \begin{subfigure}[b]{0.22\textwidth}
  \includegraphics[width=\textwidth]{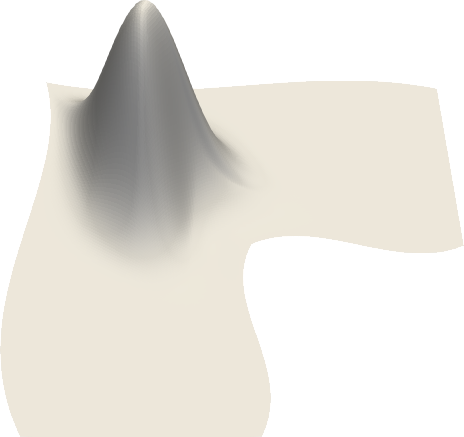}
  \subcaption{${E}_{\kappa} [b_4^+, 0]$}
 \end{subfigure}
 \begin{subfigure}[b]{0.22\textwidth}
  \includegraphics[width=\textwidth]{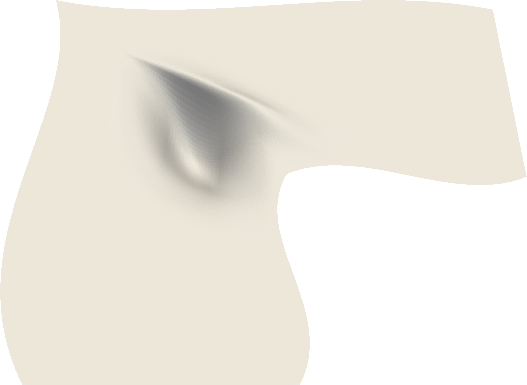}
  \subcaption{${E}_{\kappa} [0, b_3^-]$}
 \end{subfigure}
  \begin{subfigure}[b]{0.22\textwidth}
  \includegraphics[width=\textwidth]{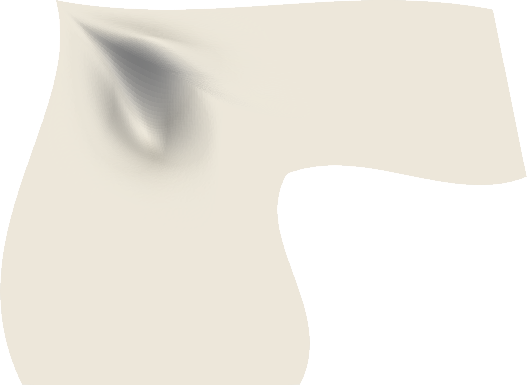}
  \subcaption{${E}_{\kappa} [0, b_4^-]$}
 \end{subfigure}
 
  \begin{subfigure}[b]{0.16\textwidth}
  \includegraphics[width=\textwidth]{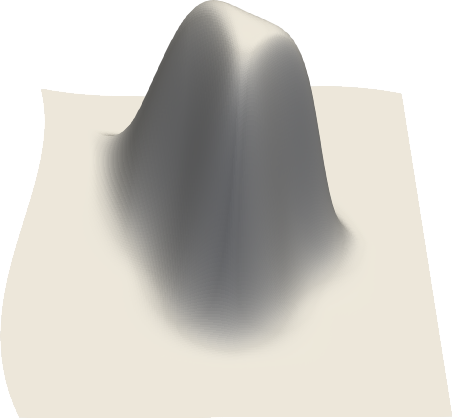}
  \subcaption{$V_{\iota} [\Phi_{1,1}]$}
 \end{subfigure}
 \begin{subfigure}[b]{0.16\textwidth}
  \includegraphics[width=\textwidth]{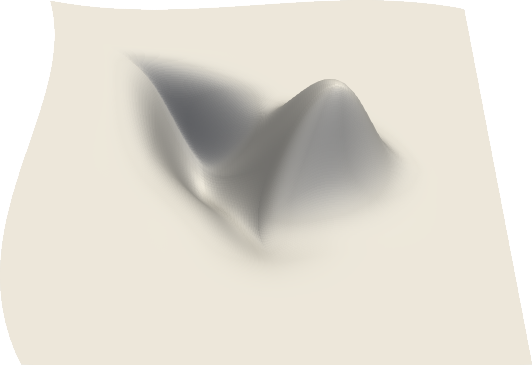}
  \subcaption{$V_{\iota} [\Phi_{2,1}]$}
 \end{subfigure}
  \begin{subfigure}[b]{0.16\textwidth}
  \includegraphics[width=\textwidth]{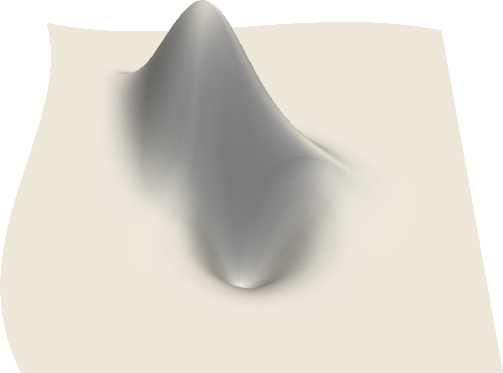}
  \subcaption{$V_{\iota} [\Phi_{1,2}]$}
 \end{subfigure}
   \begin{subfigure}[b]{0.16\textwidth}
  \includegraphics[width=\textwidth]{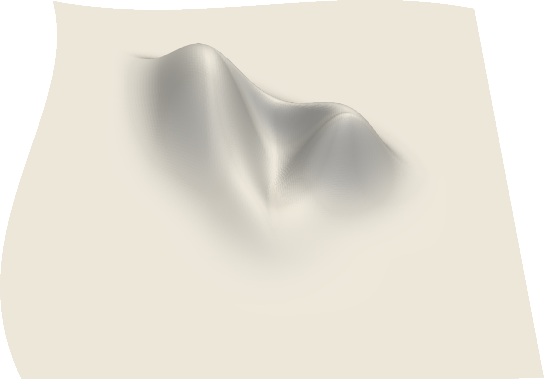}
  \subcaption{$V_{\iota} [\Phi_{2,2}]$}
 \end{subfigure}
 \begin{subfigure}[b]{0.16\textwidth}
  \includegraphics[width=\textwidth]{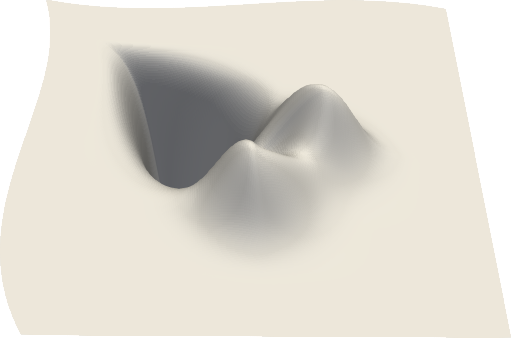}
  \subcaption{$V_{\iota} [\Phi_{3,1}]$}
 \end{subfigure}
  \begin{subfigure}[b]{0.16\textwidth}
  \includegraphics[width=\textwidth]{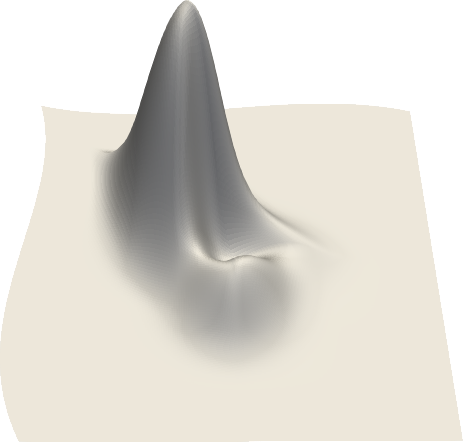}
  \subcaption{$V_{\iota} [\Phi_{1,3}]$}
 \end{subfigure}
 \caption{Examples of global basis functions at the interface (marked with a thick line) and vertex (marked with a dot). Only the relevant patches are plotted.} \label{fig:globalspace}
\end{figure}

\begin{rmk} \label{rmk:exactC1smoothness}
 We now briefly discuss the $C^1$ smoothness of the space $\widetilde{\mathcal{V}}_{h}$ which is discussed in more details in~\cite[Subsection~6.3]{WEINMULLER2021114017}. In some cases, the $C^1$ condition in~\eqref{eq:c1condition} at the interface $I_\kappa$ is exact, i.e., $\widetilde{\f n}_k = -\widetilde{\f n}_l$. A sufficient condition would be $\widetilde{\f n}_k = {\f n}_k = -{\f n}_l = -\widetilde{\f n}_l$. This is the case, when the projection of the gluing data is exact, i.e., $\widetilde{\alpha}^{(S)}(v) = \alpha^{(S)}(v)$ and $\widetilde{\beta}^{(S)}(v) = \beta^{(S)}(v)$ for $S\in \{k,l\}$. The condition holds for all $v \in [0,1]$ if the gluing data satisfies $\alpha^{(k)},\beta^{(k)}\in \mathcal{S}(\widetilde{p},\widetilde{r},h)$. Then the condition in~\eqref{eq:c1condition} is actually an exact $C^1$ condition and the jump in~\eqref{eq:jumpnormalderivative} vanishes. As a result we then have $\mathcal{A}_{I_{\kappa}} \subseteq C^1 (\Omega)$.
\end{rmk}

\subsection{Imposing inhomogeneous boundary conditions}\label{sec:imposing-bc}

Recall the boundary conditions
\begin{align}
  \left.
  \begin{array}{ll}
   \varphi &= g_0 \\
   \partial_{\f n} \varphi &= g_1
  \end{array}\right\}
   \quad &\text{ on } \Gamma_N \quad \text{ and } \\
  \left.
  \begin{array}{ll}
   \varphi &= g_0 \\
   \Delta \varphi &= g_2
  \end{array}\right\}
   \quad &\text{ on } \Gamma_L,
\end{align}
We assume that each set $\Gamma_N$ and $\Gamma_L$ is the union of boundary edges of patches, i.e., the boundary conditions can change only at vertices of the multi-patch domain.

The boundary condition $\Delta \varphi = g_2$ is naturally enforced in the equation on the right hand side. The other two boundary conditions are enforced strongly by encorporating them into the space. To do so we need to define functions that satisfy the boundary conditions for general $g_0$ and $g_1$, as well as functions that have homogeneous boundary conditions spanning the space
\[
 \widetilde{\mathcal{V}}_{h,0} = \{ \varphi_h \in \widetilde{\mathcal{V}}_{h} : \varphi = 0 \mbox{ on }\partial\Omega \mbox{ and }\partial_{\f n} \varphi = 0 \text{ on } \Gamma_N\}.
\]
We collect all functions which are used to approximate $g_0$ and $g_1$ in the space $\widetilde{\mathcal{V}}_{h,\partial \Omega} \subset \widetilde{\mathcal{V}}_{h}$. We then have
\[
 \widetilde{\mathcal{V}}_{h,\partial \Omega} \oplus \widetilde{\mathcal{V}}_{h,0} = \widetilde{\mathcal{V}}_{h}.
\]
Similar to the global space $\widetilde{\mathcal{V}}_{h}$, we split the space $\widetilde{\mathcal{V}}_{h,\partial \Omega}$ into separate contributions
\[
 \widetilde{\mathcal{V}}_{h,\partial \Omega} = \left( \bigoplus_{\iota \in \mathcal{M}_{V,\partial\Omega}} \mathcal{A}_{V_\iota,\partial \Omega} \right) \oplus \left( \bigoplus_{\sigma \in \mathcal{M}_E} \mathcal{A}_{B_\sigma,\partial \Omega} \right),
\]
where $\mathcal{M}_{V,\partial\Omega}$ denotes the indices of all boundary vertices.

Let $B_\sigma =  E^{(k)}_s$ be a boundary edge. For $B_\sigma \subset \Gamma_N$ we set $\mathcal{A}_{B_\sigma,\partial \Omega} = \mathcal{A}_{B_\sigma}$, collecting all functions from $\widehat{\mathcal{A}}_{E,s,+}^{(k)}$ and $\widehat{\mathcal{A}}_{E,s,-}^{(k)}$, whereas for $B_\sigma \subset \Gamma_L$ we define $\mathcal{A}_{B_\sigma,\partial \Omega} = \mbox{span}\{ \widehat{\mathcal{A}}_{E,s,+}^{(k)} \circ (\f F^{(k)})^{-1} \}$ to contain only those functions constructed from $\widehat{\mathcal{A}}_{E,s,+}^{(k)}$.

Let $V_\iota$ be the vertex at the boundary and the corresponding vertex space is constructed by
\[
 {\mathcal{A}}_{V_\iota} = \text{span} \{ V_{\iota} [\Phi_{1,1}], V_{\iota} [\Phi_{2,1}], V_{\iota} [\Phi_{3,1}], V_{\iota} [\Phi_{1,2}], V_{\iota} [\Phi_{1,3}], V_{\iota} [\Phi_{2,2}]  \}. 
\]
To obtain the correct subspace $\mathcal{A}_{V_\iota,\partial \Omega}$, we compute the kernel of the space, evaluated with the value, that is
\begin{align*}
 \text{ker} \langle \mathcal{A}_{V_\iota}  \rangle &= \left\{ \varphi_h = \lambda_1 V_{\iota} [\Phi_{1,1}] + \lambda_2 V_{\iota} [\Phi_{2,1}] + \lambda_3 V_{\iota} [\Phi_{3,1}] + \lambda_4 V_{\iota} [\Phi_{1,2}] + \lambda_5 V_{\iota} [\Phi_{1,3}] + \lambda_6 V_{\iota} [\Phi_{2,2}]
 \right. \\
  & \quad \left. \text{with } \lambda_i \in \mathbb{R}, \, i \in \{1,...,6\} \; : \; \varphi_h|_{\partial \Omega} = 0 \text{ and } \partial_{\f n} \varphi_h |_{\Gamma_N} = 0 \right\}.
\end{align*}
For the boundary space, we use the functions which span
\[
 \mathcal{A}_{V_\iota,\partial \Omega} = \mathcal{A}_{V_\iota} \setminus \text{ker} \langle \mathcal{A}_{V_\iota} \rangle.
\]
Note that the dimension of the kernel and of the boundary space depends on the considered boundary conditions and on the geometry.
\begin{rmk}
To obtain an exact kernel at the vertices, an interpolation of the approximate gluing data at the boundary points is required. However, the interpolation can be omitted. In this case, the kernel must be calculated to a $h$-dependent tolerance.
\end{rmk}

\section{Existence and uniqueness of the solution using Nitsche's method} \label{sec:discreteSpaceNitsche}

In this section, the optimal choice of the stability parameter yielding coercivity and boundedness of the bilinear form of Problem~\ref{problem:discreteformulationnitsche} is derived. Thus, the existence and uniqueness of the solution of Problem~\ref{problem:discreteformulationnitsche} is shown. We prove the coercivity and boundedness of the form in the following dG-norm
\begin{align*}
  \norm{\varphi}_{\mathcal{X}_h}^2 = (\varphi, \varphi)_{\mathcal{X}_h} \quad \text{where} \quad (\varphi, \psi)_{\mathcal{X}_h} \coloneqq (\Delta\varphi, \Delta\psi)_{\mathcal{H}^0 (\Omega)} + (\varphi, \psi)_{C_h}.
\end{align*}
We start with a description of the discrete space for Nitsche's method. Let the space $\mathcal{X}_{h}^{(k)}$ be the standard tensor-product spline space for the patch $k$. Then the space $\mathcal{X}_{h}$ is the collection of the patch spaces $\mathcal{X}_{h}^{(k)}$ and, in addition, the dofs along the interfaces are matching to ensure $C^0$ smoothness, see,  e.g.,~\cite{cottrell2009isogeometric, scott2014isogeometric}. For the Problem~\ref{problem:discreteformulationnitsche}, we set
\[
 \mathcal{X}_{h,0} = \mathcal{X}_h \cap \mathcal{X}_0
\]
to fulfill the boundary conditions. In order to prove the coercivity and boundedness, we need to bound the average at the interface which is bounded as follows:
\begin{lem} \label{lem:averageboundness}
  Let $\varphi_h \in \mathcal{X}_h$. For any $I_{\kappa}$ with $\kappa \in \mathcal{M}_I$, we have
 \[
  \norm{\average{\Delta \varphi_h}_{\kappa}}_{L^2(I_{\kappa})}^2 \leq c_\kappa (h) \left( \norm{\Delta \varphi_h}_{L^2 (\Omega^{(k)})}^2 + \norm{\Delta \varphi_h}_{L^2 (\Omega^{(l)})}^2 \right).
 \]
 where $c_{\kappa} (h)>0$ is a constant, which depends on the mesh size $h$ and on the patch parametrizations, but not on the function $\varphi_h$.
\end{lem}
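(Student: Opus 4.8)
The plan is to read the claim as a \emph{discrete trace inequality}. Although $\Delta\varphi_h$ is not itself an isogeometric function, its restriction to each patch $\Omega^{(m)}$ lies in the finite-dimensional space $\{\Delta(w\circ(\f F^{(m)})^{-1}) : w\in\f S^{(m)}\}$, so a trace bound with an $h$-dependent constant is to be expected, and the real task is to pin down the scaling in $h$. First I would remove the average: since $\average{\Delta\varphi_h}_\kappa = \tfrac12\big((\Delta\varphi_h)|_{\Omega^{(k)}}+(\Delta\varphi_h)|_{\Omega^{(l)}}\big)|_{I_\kappa}$, convexity of $t\mapsto t^2$ gives
\[
 \norm{\average{\Delta\varphi_h}_\kappa}_{L^2(I_\kappa)}^2 \le \tfrac12\norm{(\Delta\varphi_h)|_{\Omega^{(k)}}}_{L^2(I_\kappa)}^2 + \tfrac12\norm{(\Delta\varphi_h)|_{\Omega^{(l)}}}_{L^2(I_\kappa)}^2 ,
\]
so it suffices to prove, for each $m\in\{k,l\}$, a one-sided estimate $\norm{(\Delta\varphi_h)|_{\Omega^{(m)}}}_{L^2(I_\kappa)}^2 \le c_m(h)\,\norm{\Delta\varphi_h}_{L^2(\Omega^{(m)})}^2$ and then set $c_\kappa(h) = \tfrac12\big(c_k(h)+c_l(h)\big)$.

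Next I would pull back to the parameter domain. With $w = \varphi_h\circ\f F^{(m)}\in\f S^{(m)}$ and $g = (\Delta\varphi_h)\circ\f F^{(m)}$, the chain rule yields $g = \sum_{|\boldsymbol\alpha|\le 2} c_{\boldsymbol\alpha}\,\partial^{\boldsymbol\alpha} w$, where the $c_{\boldsymbol\alpha}$ are fixed functions built from the entries of $\nabla\f F^{(m)}$ and from $1/\det\nabla\f F^{(m)}$; since $\f F^{(m)}$ is a fixed regular spline parametrization with $\det\nabla\f F^{(m)}\ge\underline c>0$, the $c_{\boldsymbol\alpha}$ together with their first derivatives are bounded on $\widehat\Omega$ by constants depending only on $\f F^{(m)}$. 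Changing variables on the edge and on the patch, using $\underline c\le\det\nabla\f F^{(m)}$ and the boundedness of $\partial_v\f F^{(m)}$, reduces the one-sided estimate to an inequality on the reference domain, $\norm{g}_{L^2(\widehat E^{(m)}_{s_m})}^2 \le \widehat c_m(h)\,\norm{g}_{L^2(\widehat\Omega)}^2$, where $\widehat c_m(h)$ is allowed to carry an extra $h$-independent geometry factor. On each mesh element $\widehat Q$ of $\widehat\Omega$, $g$ is a rational function whose numerator is a polynomial of degree bounded in terms of $p$ and whose denominator is $(\det\nabla\f F^{(m)})^2\ge\underline c^2$; moreover $g|_{\widehat E^{(m)}_{s_m}}$ only involves the strip of $n=1/h$ elements touching that edge.

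The core step is the element-wise discrete trace inequality. I would fix an element $\widehat Q$ of size $h$ with an edge $\widehat e$ on $\widehat E^{(m)}_{s_m}$, rescale it affinely to the unit square $\check Q$, and track the powers of $h$: one has $\norm{g}_{L^2(\widehat e)}^2 = h\,\norm{\check g}_{L^2(\check e)}^2$ and $\norm{g}_{L^2(\widehat Q)}^2 = h^2\,\norm{\check g}_{L^2(\check Q)}^2$, where $\check g$ lies in the image of the $(p+1)^2$-dimensional space of polynomials of coordinate degree $\le p$ under the rescaled operator. On $\check Q$ the continuous (Sobolev) trace inequality gives $\norm{\check g}_{L^2(\check e)}^2 \le C\big(\norm{\check g}_{L^2(\check Q)}^2 + \norm{\nabla\check g}_{L^2(\check Q)}^2\big)$, and a discrete inverse inequality on the finite-dimensional space containing $\check g$ absorbs the gradient, $\norm{\nabla\check g}_{L^2(\check Q)} \le C\norm{\check g}_{L^2(\check Q)}$; undoing the scaling yields $\norm{g}_{L^2(\widehat e)}^2 \le C\,h^{-1}\,\norm{g}_{L^2(\widehat Q)}^2$. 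Summing over the $n$ elements on the edge gives $\widehat c_m(h) = C\,h^{-1}$, hence $c_\kappa(h) = \bar c_\kappa\,h^{-1}$ with $\bar c_\kappa$ depending only on $p$ and on $\f F^{(k)},\f F^{(l)}$ but not on $h$ or $\varphi_h$ — the scaling one expects, matching the $\eta_\kappa/h$ weight in $(\cdot,\cdot)_{C_h}$.

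I expect the main obstacle to be the \emph{uniformity} of the element-wise constant $C$ across the mesh: each element carries a different, and under refinement ever smaller, piece of $\f F^{(m)}$, so the local image spaces in which $g|_{\widehat Q}$ lives vary with the element and with $h$, and one must argue that the trace and inverse constants on them remain bounded. This should follow from the uniform regularity of $\f F^{(m)}$ (bounded derivatives and $\det\nabla\f F^{(m)}\ge\underline c>0$, so that the pulled-back principal part of $\Delta$ is uniformly elliptic) together with a compactness/continuity argument for the trace constant viewed as a function of the uniformly bounded coefficients $c_{\boldsymbol\alpha}$; alternatively, one keeps $g = \sum_{|\boldsymbol\alpha|\le 2}c_{\boldsymbol\alpha}\,\partial^{\boldsymbol\alpha}w$ and applies the classical polynomial trace inequality to each spline derivative $\partial^{\boldsymbol\alpha}w$ element-wise, at the cost of a short extra argument controlling $\sum_{|\boldsymbol\alpha|\le 2}\norm{\partial^{\boldsymbol\alpha}w}_{L^2(\widehat\Omega)}$ (the component of $w$ in the kernel of the physical Laplacian drops out of both sides and can be quotiented away).
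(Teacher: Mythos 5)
Your proposal is correct in spirit but takes a genuinely different --- and more ambitious --- route than the paper. Your first step (convexity of the square to replace the average by the two one-sided edge traces) is exactly the paper's first step. After that the paper stops almost immediately: since $\norm{\Delta \varphi_h}_{L^2(E_{s_k}^{(k)})}$ depends only on $\Delta\varphi_h|_{\Omega^{(k)}}$, which ranges over a finite-dimensional space, the supremum of $\norm{\Delta \varphi_h}_{L^2(E_{s_k}^{(k)})}^2$ over all $\varphi_h\in\mathcal{X}_h$ with $\norm{\Delta\varphi_h}_{L^2(\Omega^{(k)})}=1$ is finite and is simply \emph{defined} to be $c_k$; then $c_\kappa(h)=\tfrac12(c_k+c_l)$, an abstract, $h$-dependent constant that in practice is computed via a generalized eigenvalue problem. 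The $h^{-1}$ scaling you derive is not part of the lemma at all --- it is deferred to Assumption~2. So your quantitative argument proves more than is asked: if completed, it establishes both the lemma and Assumption~2, which is a real gain, since the paper leaves the latter unproved. The price is the uniformity issue you flag yourself, and the cleanest repair is the one you half-suggest: on each element write $g=(\Delta\varphi_h)\circ\f F^{(m)}$ as a polynomial of degree bounded in terms of $p$ divided by a fixed power of $\det\nabla\f F^{(m)}$, which is bounded above and below independently of $h$; applying the standard polynomial trace inequality to the numerator then gives the element-wise bound $\norm{g}_{L^2(\widehat e)}^2\leq C h^{-1}\norm{g}_{L^2(\widehat Q)}^2$ with $C$ depending only on $p$ and the parametrization. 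By contrast, your second fallback --- applying the trace inequality to each $\partial^{\boldsymbol{\alpha}}w$ and then controlling $\sum_{|\boldsymbol{\alpha}|\leq 2}\norm{\partial^{\boldsymbol{\alpha}}w}_{L^2(\widehat\Omega)}$ by $\norm{\Delta\varphi_h}_{L^2(\Omega^{(m)})}$ --- is the weak point: on the discrete patch space the Laplacian does not control the full Hessian uniformly in $h$ merely after quotienting out its kernel, so that route would need a genuine additional (and $h$-uniform) elliptic-regularity-type argument. In short: for the lemma as stated the paper's two-line finite-dimensionality argument suffices, while your approach, with the numerator/denominator fix, buys the explicit mesh dependence that the paper only assumes.
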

\begin{proof}
 Using the definition of the average, we have with the triangle inequality
 \[
  \norm{\average{\Delta \varphi_h}_{\kappa}}_{L^2(I_{\kappa})}^2 \leq \frac12 \left( \norm{\Delta \varphi_h}_{L^2(E_{s_k}^{(k)})}^2 + \norm{\Delta \varphi_h}_{L^2(E_{s_l}^{(l)})}^2 \right).
 \]
 Since $\varphi_h$ is from a finite dimensional space, the supremum
 \[
  \sup_{\varphi_h \in \mathcal{X}_h, \norm{\Delta \varphi_h}_{L^2(\Omega^{(k)})}=1} \norm{\Delta \varphi_h}_{L^2(E_{s_k}^{(k)})}^2
 \]
 exists and we denote it by $c_k$. Similarly, we obtain an upper bound $c_l$ on $\Omega^{(l)}$. Thus, the proof is complete with $c_\kappa = \frac{1}{2} (c_k+c_l)$, which does, in general, depend on $h$.
\end{proof}
\begin{ass} \label{ass:meshindependent}
 For any $I_{\kappa}$ with $\kappa \in \mathcal{M}_I$, the constant $c_\kappa (h)$ from Lemma~\ref{lem:averageboundness} satisfies 
 \[
  c_\kappa (h) \leq \frac{\overline{c}_\kappa}{h},
 \]
 where $\overline{c}_\kappa$ is an $h$-independent constant.
\end{ass}
In practice, the constant $c_\kappa(h)$ in Lemma~\ref{lem:averageboundness} can be computed by a generalized eigenvalue problem, following the same steps as in~\cite{embar2010imposing}, for example. For $\eta_{\kappa}$ sufficiently large, Nitsche's formulation as given in Problem~\ref{problem:discreteformulationnitsche} is coercive and bounded which is stated in the following Theorem.
\begin{thm} \label{thm:coerciveboundedness}
  For all $\varphi_h, \psi_h \in \mathcal{X}_{h,0}$, let $\eta_\kappa$ be such that $\eta_\kappa > 16 \, h \, c_\kappa(h)$, then
  \begin{align*}
    (\varphi_h,\varphi_h)_{A_h} \geq \underline{\mu} \norm{\varphi_h}_{\mathcal{X}_h}^2 \quad \text{and} \quad (\varphi_h,\psi_h)_{A_h} \leq \overline{\mu} \norm{\varphi_h}_{\mathcal{X}_h} \norm{\psi_h}_{\mathcal{X}_h}
  \end{align*}
  where $c_\kappa (h)$ is the constant from Lemma~\ref{lem:averageboundness} and $\underline{\mu}, \overline{\mu}$ are constants depending on $c_\kappa (h)$. Considering Assumption~\ref{ass:meshindependent} we require $\eta_\kappa > 16 \, \overline{c}_\kappa$ and thus $\underline{\mu}, \overline{\mu}$ are constants independent of the mesh-size.
\end{thm}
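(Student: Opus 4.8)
The plan is to prove coercivity first and then boundedness, using Lemma~\ref{lem:averageboundness} to control the cross term $(\varphi_h,\psi_h)_{B_h}$ in terms of the two pieces of the $\mathcal{X}_h$-norm. The key elementary tool is Young's inequality: for the cross term we write, on each interface $I_\kappa$,
\begin{align*}
 2\left| (\jump{\partial_{\f n_\kappa}\varphi_h}_\kappa, \average{\Delta\varphi_h}_\kappa)_{L^2(I_\kappa)} \right|
 \leq \frac{\epsilon_\kappa}{h} \norm{\jump{\partial_{\f n_\kappa}\varphi_h}_\kappa}_{L^2(I_\kappa)}^2 + \frac{h}{\epsilon_\kappa} \norm{\average{\Delta\varphi_h}_\kappa}_{L^2(I_\kappa)}^2,
\end{align*}
for a free parameter $\epsilon_\kappa>0$ to be fixed below. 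Applying Lemma~\ref{lem:averageboundness} to the second summand bounds it by $\tfrac{h}{\epsilon_\kappa} c_\kappa(h)(\norm{\Delta\varphi_h}_{L^2(\Omega^{(k)})}^2+\norm{\Delta\varphi_h}_{L^2(\Omega^{(l)})}^2)$. Summing over $\kappa\in\mathcal{M}_I$ and noting that each patch index $k$ appears in at most a fixed number of interfaces, the total contribution of the $\average{\Delta\varphi_h}$ terms is bounded by $\big(\max_\kappa \tfrac{h\,c_\kappa(h)}{\epsilon_\kappa}\big)\cdot C\,\norm{\Delta\varphi_h}_{\mathcal{H}^0(\Omega)}^2$; in the cleanest version one simply chooses $\epsilon_\kappa$ proportional to $h\,c_\kappa(h)$ per interface, so the $\Delta$-terms are absorbed interface-by-interface.

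For coercivity, I would expand
\begin{align*}
 (\varphi_h,\varphi_h)_{A_h} = \norm{\Delta\varphi_h}_{\mathcal{H}^0(\Omega)}^2 - 2(\varphi_h,\varphi_h)_{B_h} + \sum_{\kappa}\frac{\eta_\kappa}{h}\norm{\jump{\partial_{\f n_\kappa}\varphi_h}_\kappa}_{L^2(I_\kappa)}^2,
\end{align*}
and estimate $2(\varphi_h,\varphi_h)_{B_h}$ from above by the Young/Lemma bound. Choosing $\epsilon_\kappa = \tfrac14\eta_\kappa$, the jump term produced by Young's inequality is $\tfrac{\eta_\kappa}{4h}\norm{\jump{\partial_{\f n_\kappa}\varphi_h}_\kappa}^2$, leaving at least $\tfrac{3\eta_\kappa}{4h}\norm{\jump{\partial_{\f n_\kappa}\varphi_h}_\kappa}^2\ge\tfrac12\cdot\tfrac{\eta_\kappa}{h}\norm{\cdot}^2$ of the stabilization intact, while the $\Delta$-part of the bound is $\tfrac{h\,c_\kappa(h)}{\epsilon_\kappa}(\cdots) = \tfrac{4h\,c_\kappa(h)}{\eta_\kappa}(\cdots)$; the hypothesis $\eta_\kappa>16\,h\,c_\kappa(h)$ makes this coefficient smaller than $\tfrac14$ per adjacent patch, so summing keeps the total Laplacian contribution below, say, $\tfrac12\norm{\Delta\varphi_h}_{\mathcal{H}^0(\Omega)}^2$. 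This yields $(\varphi_h,\varphi_h)_{A_h}\ge \tfrac12\norm{\Delta\varphi_h}_{\mathcal{H}^0(\Omega)}^2 + \tfrac12(\varphi_h,\varphi_h)_{C_h}\ge \underline\mu\,\norm{\varphi_h}_{\mathcal{X}_h}^2$ with $\underline\mu=\tfrac12$; under Assumption~\ref{ass:meshindependent} the condition becomes $\eta_\kappa>16\,\overline c_\kappa$ and $\underline\mu$ is $h$-independent.

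Boundedness is more routine: by Cauchy--Schwarz, $(\Delta\varphi_h,\Delta\psi_h)_{\mathcal{H}^0(\Omega)}\le\norm{\Delta\varphi_h}_{\mathcal{H}^0(\Omega)}\norm{\Delta\psi_h}_{\mathcal{H}^0(\Omega)}$ and likewise $(\varphi_h,\psi_h)_{C_h}\le(\varphi_h,\varphi_h)_{C_h}^{1/2}(\psi_h,\psi_h)_{C_h}^{1/2}$, both of which are dominated by $\norm{\varphi_h}_{\mathcal{X}_h}\norm{\psi_h}_{\mathcal{X}_h}$. For the two $B_h$-terms, I would apply Cauchy--Schwarz on each interface, then Lemma~\ref{lem:averageboundness} to the average factor and the trivial bound $\norm{\jump{\partial_{\f n_\kappa}\varphi_h}_\kappa}_{L^2(I_\kappa)}^2\le\tfrac{h}{\eta_\kappa}(\varphi_h,\varphi_h)_{C_h}$ to the jump factor, giving a constant involving $\max_\kappa\sqrt{h\,c_\kappa(h)/\eta_\kappa}$; collecting everything gives $\overline\mu$ depending only on $c_\kappa(h)$ (and on the fixed maximal patch valence), which is $h$-independent under Assumption~\ref{ass:meshindependent}. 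The main obstacle is purely bookkeeping: tracking the constants through the interface sum so that the final $\underline\mu,\overline\mu$ depend only on the quantities claimed, and making sure the per-interface absorption argument is compatible with the global sum (this is where the bounded-overlap of interfaces per patch is implicitly used). There is no deep difficulty — the theorem is the standard Nitsche coercivity argument with the inverse-type estimate packaged into Lemma~\ref{lem:averageboundness}.
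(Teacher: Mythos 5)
Your argument is essentially the paper's proof: the same expansion of $(\cdot,\cdot)_{A_h}$, Young's inequality with a free per-interface parameter, Lemma~\ref{lem:averageboundness} for the average term, the factor-$4$ bound coming from each patch having at most four interfaces, and Cauchy--Schwarz combined with the same lemma for boundedness, leading to the identical threshold $\eta_\kappa > 16\,h\,c_\kappa(h)$. The only slip is the explicit value $\underline{\mu}=\tfrac12$: with $\epsilon_\kappa=\tfrac14\eta_\kappa$ and only $\eta_\kappa>16\,h\,c_\kappa(h)$, summing the absorbed Laplacian contributions over the up to four interfaces adjacent to each patch bounds them merely by $\norm{\Delta\varphi_h}_{\mathcal{H}^0(\Omega)}^2$ rather than $\tfrac12\norm{\Delta\varphi_h}_{\mathcal{H}^0(\Omega)}^2$, so, exactly as in the paper (which keeps the Young parameter $\delta_\kappa$ free and derives the two positivity conditions), $\underline{\mu}$ must be allowed to depend on the gap in the inequality $\eta_\kappa>16\,h\,c_\kappa(h)$, which is all the theorem asserts.
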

\begin{proof}[Proof of Theorem~\ref{thm:coerciveboundedness}]
 By definition, we have
\begin{align*}
  (\varphi_h,\varphi_h)_{A_h}
  &=  (\varphi, \psi)_{\mathcal{X}_h} - (\varphi, \psi)_{B_h} - (\psi, \varphi)_{B_h} .
\end{align*}
With the help of Young's inequality with $\delta_{\kappa} > 0$ and Lemma~\ref{lem:averageboundness} we have
\begin{align*}
  |(\varphi_h, \varphi_h)_{B_h}| 
   &\leq
  \sum_{\kappa \in \mathcal{M}_I} \frac{1}{\delta_{\kappa}} \norm{\jump{\partial_{\f n_\kappa} \varphi_h}}_{L^2 (I_{\kappa})}^2  + \sum_{\kappa \in \mathcal{M}_I} \delta_{\kappa} \norm{ \average{\Delta \varphi_h} }_{L^2 (I_{\kappa})}^2  \\
   &\leq 
  \sum_{\kappa \in \mathcal{M}_I} \frac{1}{\delta_{\kappa}} \norm{\jump{\partial_{\f n_\kappa} \varphi_h}}_{L^2 (I_{\kappa})}^2  + \sum_{\kappa \in \mathcal{M}_I}  c_\kappa (h)\delta_{\kappa} ( \norm{\Delta \varphi_h}_{L^2 (\Omega^{(k)})}^2 + \norm{\Delta \varphi_h}_{L^2 (\Omega^{(l)})}^2) .
\end{align*}
Since
\[
 \norm{\Delta\varphi_h}_{\mathcal{H}^0(\Omega)}^2 \geq \frac{1}{4} \sum_{\kappa \in \mathcal{M}_I} ( \norm{\Delta \varphi_h}_{L^2 (\Omega^{(k)})}^2 + \norm{\Delta \varphi_h}_{L^2 (\Omega^{(l)})}^2)
\]
we have for the coercivity
\begin{align*}
 (\varphi_h,\varphi_h)_{A_h}
  &\geq \norm{\Delta\varphi_h}_{\mathcal{H}^0(\Omega)}^2  - \sum_{\kappa \in \mathcal{M}_I} \frac{2}{\delta_{\kappa}} \norm{\jump{\partial_{\f n_\kappa} \varphi_h}}_{L^2 (I_{\kappa})}^2  - \sum_{\kappa \in \mathcal{M}_I}  2 c_\kappa (h)\delta_{\kappa} ( \norm{\Delta \varphi_h}_{L^2 (\Omega^{(k)})}^2 + \norm{\Delta \varphi_h}_{L^2 (\Omega^{(l)})}^2) \\
  & \qquad + \sum_{\kappa \in \mathcal{M}_I} \frac{\eta_\kappa}{h} \norm{\jump{\partial_{\f n_\kappa} \varphi_h}}_{L^2 (I_{\kappa})}^2 \\
  &\geq \sum_{\kappa \in \mathcal{M}_I} (\frac{1}{4}- 2c_\kappa (h)\delta_{\kappa}) ( \norm{\Delta \varphi_h}_{L^2 (\Omega^{(k)})}^2 + \norm{\Delta \varphi_h}_{L^2 (\Omega^{(l)})}^2) + \sum_{\kappa \in \mathcal{M}_I} (1- \frac{2h}{\delta_{\kappa} \eta_\kappa}) \frac{\eta_\kappa}{h}\norm{\jump{\partial_{\f n_\kappa} \varphi_h}}_{L^2 (I_{\kappa})}^2 ,  
\end{align*}
thus the bilinear form is coercive, if
\[
 \frac{1}{4}- 2c_\kappa (h)\delta_{\kappa} > 0 \quad \Rightarrow \quad \frac{1}{\delta_{\kappa}} > 8c_\kappa(h)
\]
and 
\[
 1- \frac{2h}{\delta_{\kappa} \eta_\kappa} > 0 \quad \Rightarrow \quad  \eta_\kappa > \frac{2h}{\delta_{\kappa}} > 16 \, h \, c_\kappa(h)
\]
for all $\kappa$. This means that the stability parameter $\eta_\kappa$ must satisfy $\eta_\kappa > 16 \, h \, c_\kappa(h)$ and, under Assumption~\ref{ass:meshindependent}, that $\eta_\kappa > 16 \, \overline{c}_\kappa$.

For the boundedness we obtain from the Cauchy-Schwarz inequality, the triangle inequality, Lemma~\ref{lem:averageboundness} and the fact that $s \in \{1,...,4\}$
\begin{align*}
  |(\varphi_h, \psi_h)_{B_h}| &\leq
  \left(\sum_{\kappa \in \mathcal{M}_I} \norm{\jump{\partial_{\f n_\kappa} \psi_h}}_{L^2 (I_{\kappa})}^2 \right)^{1/2} \left(\sum_{\kappa \in \mathcal{M}_I} \norm{ \average{\Delta \varphi_h} }_{L^2 (I_{\kappa})}^2 \right)^{1/2} \\
  &\leq \left(\sum_{\kappa \in \mathcal{M}_I} \norm{\jump{\partial_{\f n_\kappa} \psi_h}}_{L^2 (I_{\kappa})}^2 \right)^{1/2} \left( c_\kappa(h)  \sum_{\kappa \in \mathcal{M}_I} \left( \norm{\Delta \varphi_h}_{L^2 (\Omega^{(k)})}^2 + \norm{\Delta \varphi_h}_{L^2 (\Omega^{(l)})}^2 \right) \right)^{1/2} \\
  &\leq \left( \max_\kappa \frac{h \, c_\kappa(h)}{\eta_{\kappa}} \sum_{\kappa \in \mathcal{M}_I} \frac{\eta_{\kappa}}{h}\norm{\jump{\partial_{\f n_\kappa} \psi_h}}_{L^2 (I_{\kappa})}^2 \right)^{1/2} \left( \sum_{k \in \mathcal{M}_P} 4 \norm{\Delta \varphi_h }_{L^2 (\Omega^{(k)})}^2 \right)^{1/2} \\
  &\leq \max_\kappa \left( \frac{4 \, h \, c_\kappa(h)}{\eta_{\kappa}}\right)^{1/2} \norm{\psi_h}_{\mathcal{X}_h} \norm{\varphi_h}_{\mathcal{X}_h}
\end{align*}
for all $\varphi_h, \psi_h \in \mathcal{X}_{h,0}$. Using the estimate above and the Cauchy-Schwarz inequality, we obtain boundedness 
of the bilinear form
\begin{align*}
  (\varphi_h,\varphi_h)_{A_h}
  &=  (\varphi, \psi)_{\mathcal{X}_h} - (\varphi, \psi)_{B_h} - (\psi, \varphi)_{B_h} 
  \leq \left(1+2\max_\kappa \left( \frac{4 \, h \, c_\kappa(h)}{\eta_{\kappa}}\right)^{1/2} \right) \norm{\psi_h}_{\mathcal{X}_h} \norm{\varphi_h}_{\mathcal{X}_h}.
\end{align*}
If moreover Assumption~\ref{ass:meshindependent} is satisfied, then the constant is independent of $h$.
\end{proof}
Using Theorem~\ref{thm:coerciveboundedness}, we can apply the Lax-Milgram theorem, e.g., as in~\cite{evans1998partial}, and consequently have existence and uniqueness of the solution to Problem~\ref{problem:discreteformulationnitsche}. The existence and uniqueness of the solution of Problem~\ref{problem:approxC1discreteformulation} is straightforward.

\section{Numerical experiments}\label{sec:numerical-experiments}

In this section we perform numerical experiments on four $C^0$ multi-patch geometries -- denoted by Example I-IV. On a biharmonic model problem we compare the two methods considered in this work, i.e., the approximate $C^1$ discretization and Nitsche's method. More precisely, we solve on each geometry Problem~\ref{problem:approxC1discreteformulation} using the discrete space $\widetilde{\mathcal{V}}_{h,0}^1$ and  Problem~\ref{problem:discreteformulationnitsche} using the discrete space $\mathcal{X}_{h,0}$.
For simplicity, let $p = p_1^{(k)} = p_2^{(k)}$ and $r = r_1^{(k)} = r_2^{(k)}$. We consider the exact solution
\[
 \varphi(x,y) = (\cos (4\pi x)-1)(\cos (4\pi y)-1).
\]
As boundary conditions in~\eqref{eq:boundary2}-\eqref{eq:boundary3} we consider $\Gamma_L=\emptyset$ for Example~I and~II, and $\Gamma_N=\emptyset$ for Example~III and~IV.

Let $\varphi_h$ be the discrete solution of either Problem~\ref{problem:approxC1discreteformulation} or Problem~\ref{problem:discreteformulationnitsche}. Since the exact solution is smooth, we expect in both cases optimal convergence rates in the mesh size~$h$, i.e., 
\begin{align}
  \norm{\varphi - \varphi_h}_{\mathcal{H}^2 (\Omega)} = O(h^{p-1}). \label{eq:errorestimate}
\end{align}

In Subsection~\ref{sec:geometries}, the four geometries are presented. A comparison of the errors for different polynomial degrees is given in Subsection~\ref{sec:error}. In Subsection~\ref{sec:stabilizationparameter} we study the influence of the solution using Nitsche's method on the choice of the stability parameter. Finally, in Subsection~\ref{sec:reparam-v-approx} we conclude with a comparison of the (exact) $C^1$-smooth discretization on the reparametrized AS-$G^1$ geometry, following the approach presented in~\cite{kapl2018construction}, with the approximate~$C^1$ method. All tests are implemented within the open-source C\texttt{++} library G+Smo, cf. \cite{gismoweb}.

\subsection{The model geometries} \label{sec:geometries}

We consider four geometries, the first two describe the same domain, that is, the unit square, but with different parametrizations, the last two are more application-oriented geometries. The four geometries are shown in Subfigures~\ref{subfig:geometry1}--\ref{subfig:geometry4}. Example~I consists of six bilinear patches and the gluing data is consequently linear. It follows that we have exact $C^1$ smoothness at the interfaces, see Remark~\ref{rmk:exactC1smoothness}. In contrast to Example~I, Example~II is made of bicubic patches and has curved interfaces. The gluing data of this geometry is not linear and therefore, the discete space is only approximate $C^1$. The same is true for Example~III and Example~IV: both geometries have non-linear gluing data and hence the spaces are not exact $C^1$. Example~III describes a turtle with bicubic patches, while Example~IV is a part of a picture of a car and is inspired by examples from~\cite{buchegger2016adaptively,kapl2018construction}. All patches in Examples~I--IV are B\'ezier patches. The corresponding exact solutions are depicted in Figures~\ref{subfig:exactSolution1}-\ref{subfig:exactSolution4}.

\begin{figure}[h!]
\centering
\begin{subfigure}[b]{0.23\textwidth}
  \centering
  
  \resizebox{\textwidth}{!}{
  \includegraphics[width=\textwidth]{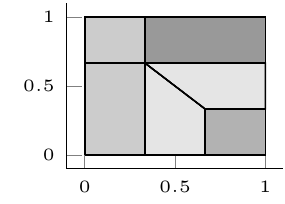}
}
  \subcaption{Ex.\ I: the multi-patch}  \label{subfig:geometry1}
\end{subfigure}
~
\begin{subfigure}[b]{0.23\textwidth}
  \centering
  
  \resizebox{\textwidth}{!}{
  \includegraphics[width=\textwidth]{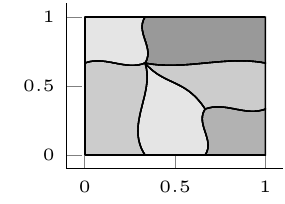}
}

  \subcaption{Ex.\ II: the multi-patch}  \label{subfig:geometry2}
\end{subfigure}
~
\begin{subfigure}[b]{0.23\textwidth}
  \centering
  
  \resizebox{\textwidth}{!}{
  \includegraphics[width=\textwidth]{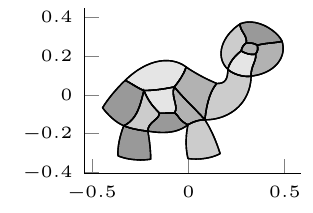}
}

  \subcaption{Ex.\ III: the multi-patch}  \label{subfig:geometry3}
\end{subfigure}
~
\begin{subfigure}[b]{0.23\textwidth}
  \centering
  
  \resizebox{\textwidth}{!}{
  \includegraphics[width=\textwidth]{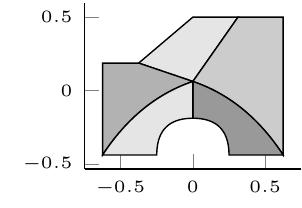}
}

  \subcaption{Ex.\ IV: the multi-patch}  \label{subfig:geometry4}
\end{subfigure}

\begin{subfigure}[b]{0.23\textwidth}
  \centering
  
  \resizebox{\textwidth}{!}{
  \includegraphics[width=\textwidth]{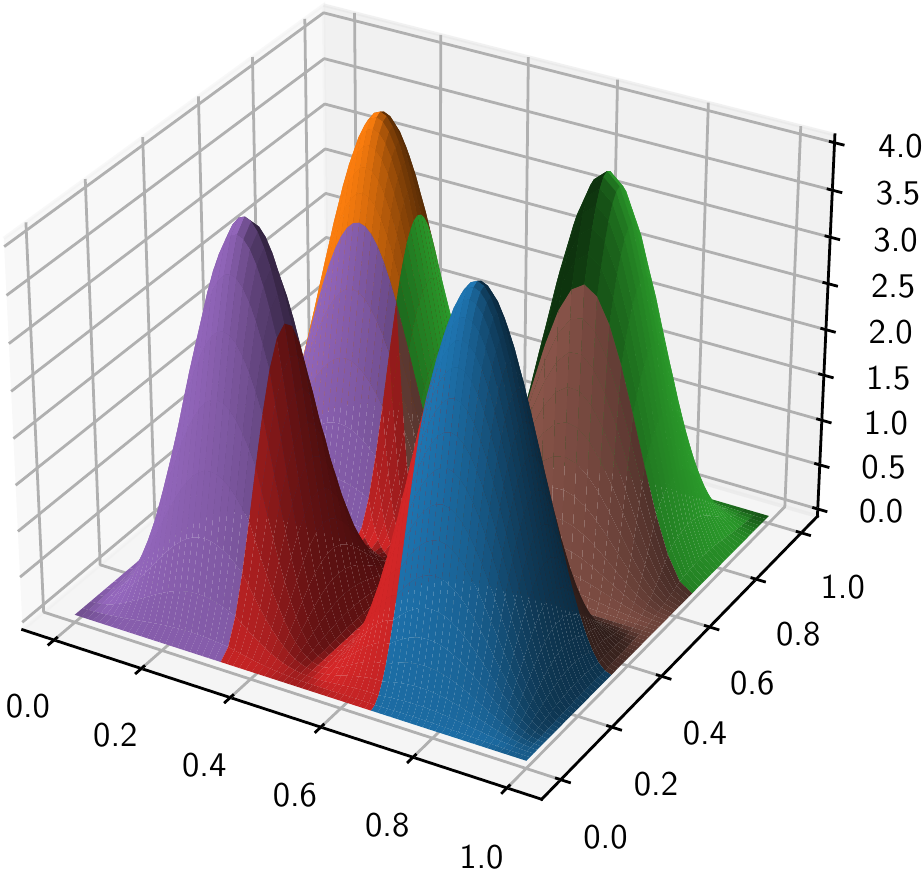}
}
  \subcaption{Ex.\ I: exact solution}  \label{subfig:exactSolution1}
\end{subfigure}
~
\begin{subfigure}[b]{0.23\textwidth}
  \centering
  
  \resizebox{\textwidth}{!}{
  \includegraphics[width=\textwidth]{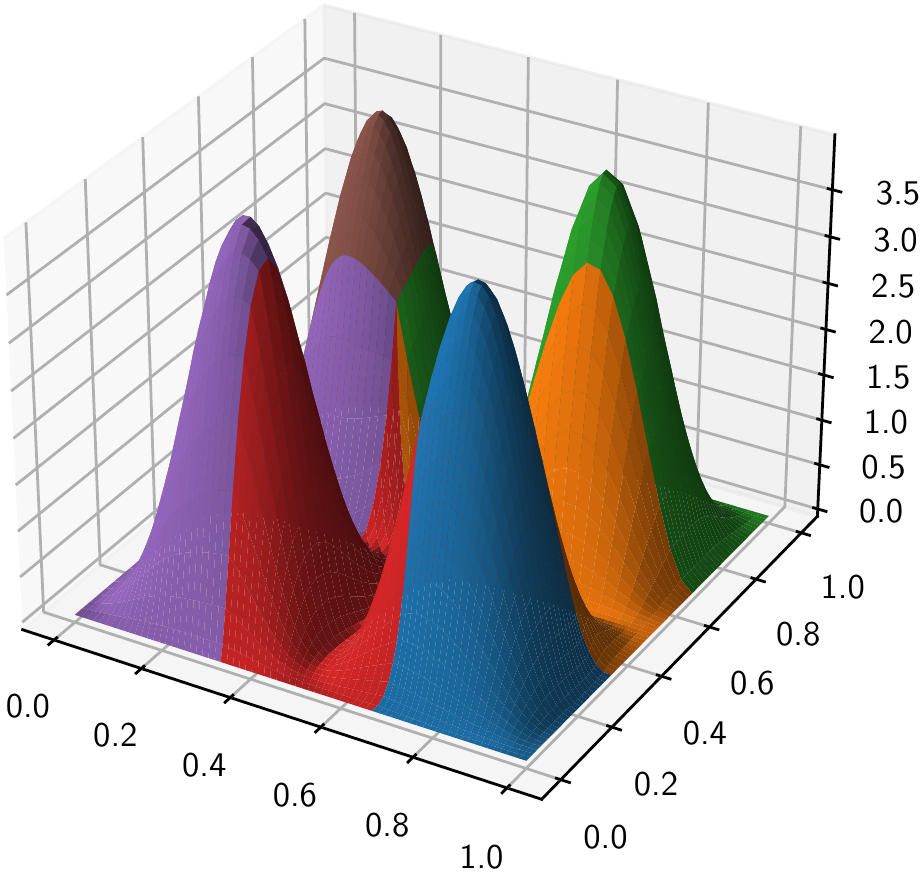}
}

  \subcaption{Ex.\ II: exact solution} 
\end{subfigure}
~
\begin{subfigure}[b]{0.23\textwidth}
  \centering
  
  \resizebox{\textwidth}{!}{
  \includegraphics[width=\textwidth]{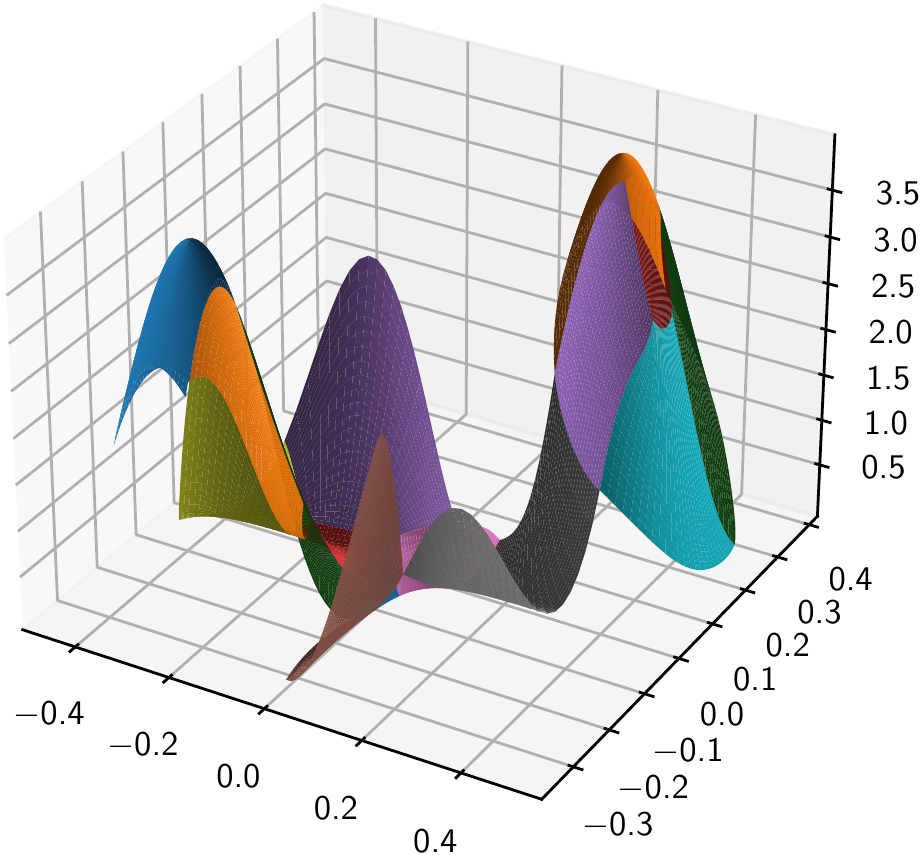}
}

  \subcaption{Ex.\ III: exact solution} 
\end{subfigure}
~
\begin{subfigure}[b]{0.23\textwidth}
  \centering
  
  \resizebox{\textwidth}{!}{
  \includegraphics[width=\textwidth]{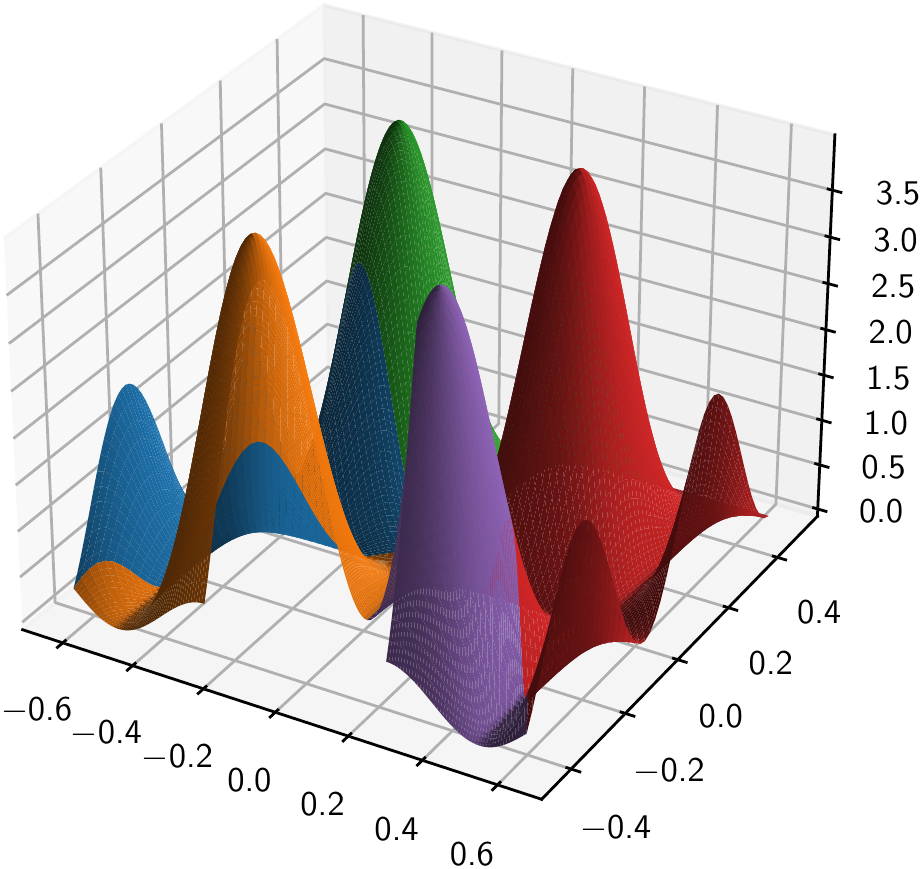}
}

  \subcaption{Ex.\ IV: exact solution}  \label{subfig:exactSolution4}
\end{subfigure}

\caption{The geometries which are used for the numerical results and their exact solutions.} \label{fig:geometry_figure}
\end{figure}

\FloatBarrier
\subsection{Convergence analysis} \label{sec:error}

We compare the convergence rates of the errors measured in the $L^2$-, $H^1$- and $H^2$-norms for the polynomial degrees $p \in \{3,4,5\}$ and use the maximum regularity $r = p - 1$. In all four examples we compute the error using the approximate $C^1$ method and Nitsche's method, represented by a dashed and solid line, respectively. To obatin the stability term in Nitsche's method, we solve Lemma~\ref{lem:averageboundness} with the eigenvalue problem at a fixed $h_0$ and choose
\[
 \eta_\kappa = \frac{4}{h_0} c_\kappa
\]
where $c_\kappa$ is the largest eigenvalue.

In Figure~\ref{fig:error_figure}, we plot the results for both methods and see that the $H^1$- and $H^2$-errors differ only slightly. In the plots, these lines almost overlap and converge optimally with the rate stated in~\eqref{eq:errorestimate}. In the $L^2$-norm, the error for the approximate $C^1$ method in Examples~I and~II is almost the same as the error for Nitsche's method, while in Examples~III and~IV the error for Nitsche's method is slightly smaller. One reason could be, that the approximate $C^1$ basis is slightly more restrictive near boundary vertices. Nevertheless, we see in the examples that both methods solve the biharmonic equation optimally with similar error values.

For Example~II, we additionally compare the errors of the approximate $C^1$ method and Nitsche's method with a single patch parametrization on the same domain (unit square). There we plot the results as a function of the number of dofs, see Figure~\ref{fig:errorvsdofs}. As expected, the single patch parametrization needs the least number of dofs for a given error. The approximated method and Nitsche's method are almost the same: the approximate $C^1$ method needs a slightly smaller number of degrees of freedom (dofs) than Nitsche's method for a fixed mesh-size. However, it can be seen that the gap between the single patch and the other two methods, with smaller mesh-size, becomes smaller. 

\begin{figure}[h!]
\begin{subfigure}[b]{0.32\textwidth}
  \centering
  
   \resizebox{\textwidth}{!}{
\includegraphics[width=\textwidth]{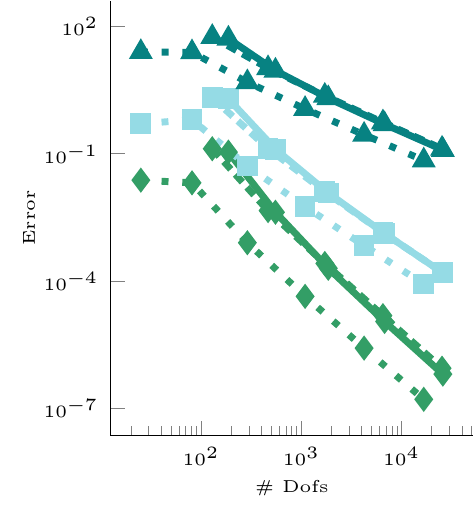}
    }
      
\subcaption{Ex.\ II, $p = 3, r = 2$} 
\end{subfigure}
\begin{subfigure}[b]{0.32\textwidth}
  \centering
  
   \resizebox{\textwidth}{!}{
\includegraphics[width=\textwidth]{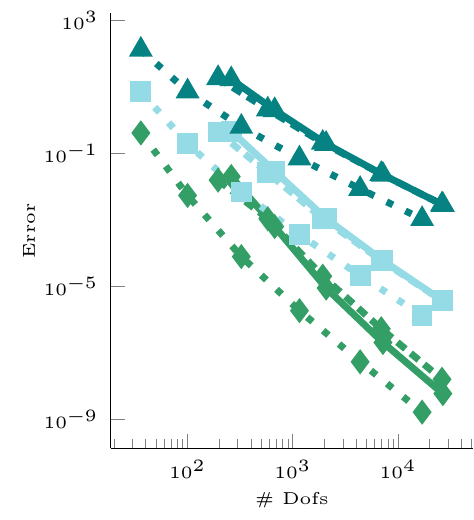}
    }
      
\subcaption{Ex.\ II, $p = 4, r = 3$} 
\end{subfigure}
\begin{subfigure}[b]{0.32\textwidth}
  \centering
  
   \resizebox{\textwidth}{!}{
\includegraphics[width=\textwidth]{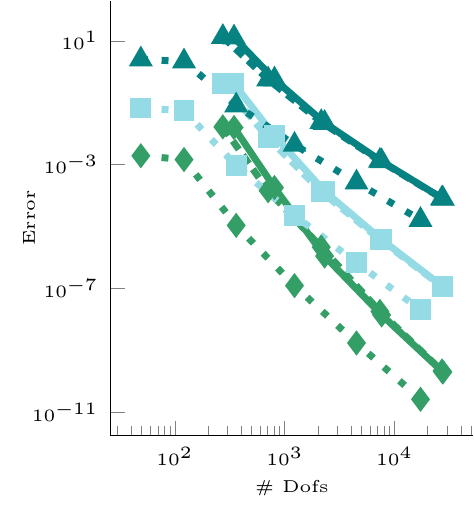}
    }
      
\subcaption{Ex.\ II, $p = 5, r = 4$} 
\end{subfigure}

\begin{center}
\begin{subfigure}[b]{0.4\textwidth}
  \centering
  
   \resizebox{\textwidth}{!}{
\includegraphics[width=\textwidth]{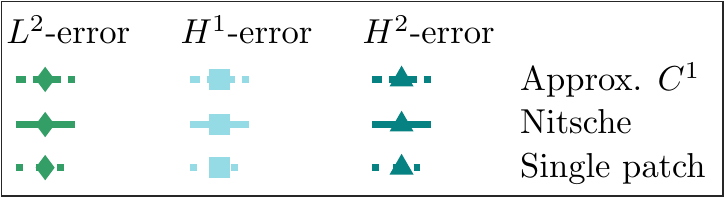}
    }
\end{subfigure}
\end{center}

\caption{The errors versus the dofs. } \label{fig:errorvsdofs}
\end{figure}

\begin{figure}[h!]

\begin{subfigure}[b]{0.23\textwidth}
  \centering
  
   \resizebox{\textwidth}{!}{
\includegraphics[width=\textwidth]{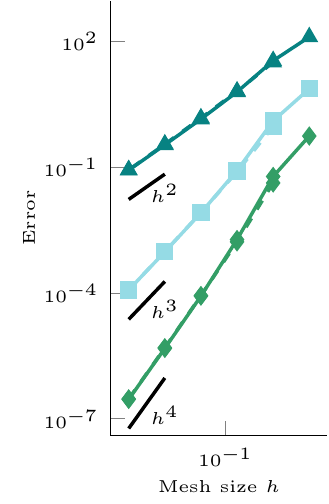}
    }
      
\subcaption{Ex.\ I, $p = 3, r = 2$} 
\end{subfigure}
\begin{subfigure}[b]{0.23\textwidth}
  \centering
  
   \resizebox{\textwidth}{!}{
\includegraphics[width=\textwidth]{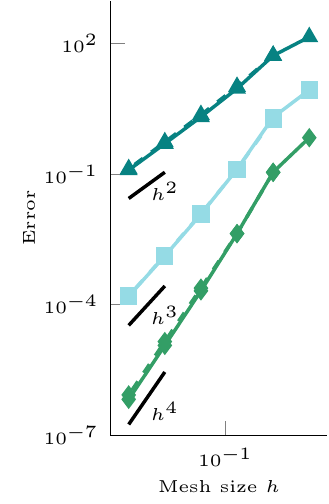}
    }
    
\subcaption{Ex.\ II, $p = 3, r = 2$}
\end{subfigure}
\begin{subfigure}[b]{0.23\textwidth}
  \centering
  
   \resizebox{\textwidth}{!}{
\includegraphics[width=\textwidth]{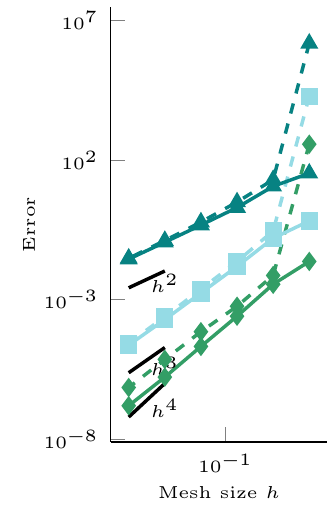}
    }
    
\subcaption{Ex.\ III, $p = 3, r = 2$}
\end{subfigure}
\begin{subfigure}[b]{0.23\textwidth}
  \centering
  
   \resizebox{\textwidth}{!}{
\includegraphics[width=\textwidth]{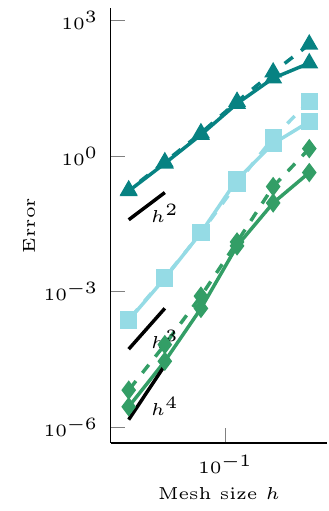}
    }
    
\subcaption{Ex.\ IV, $p = 3, r = 2$}
\end{subfigure}

\begin{subfigure}[b]{0.23\textwidth}
  \centering
  
   \resizebox{\textwidth}{!}{
\includegraphics[width=\textwidth]{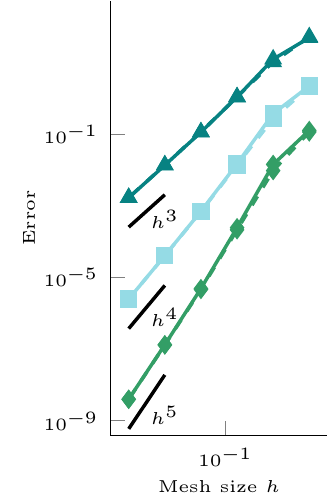}
    }
      
\subcaption{Ex.\ I, $p = 4, r = 3$} 
\end{subfigure}
\begin{subfigure}[b]{0.23\textwidth}
  \centering
  
   \resizebox{\textwidth}{!}{
\includegraphics[width=\textwidth]{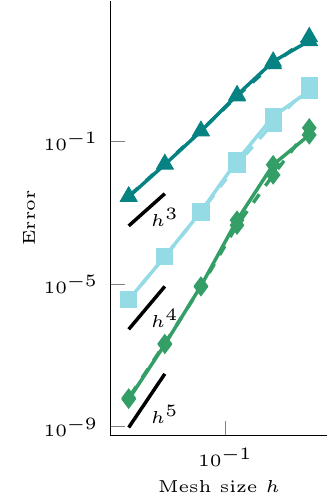}
    }
    
\subcaption{Ex.\ II, $p = 4, r = 3$}
\end{subfigure}
\begin{subfigure}[b]{0.23\textwidth}
  \centering
  
   \resizebox{\textwidth}{!}{
\includegraphics[width=\textwidth]{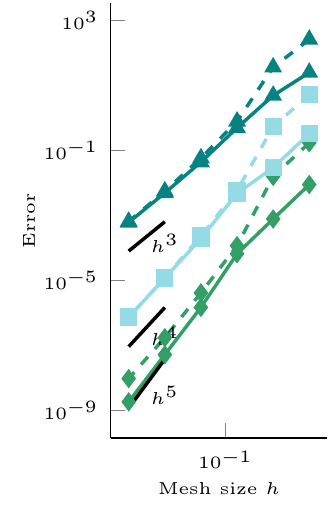}
    }
    
\subcaption{Ex.\ III, $p = 4, r = 3$}
\end{subfigure}
\begin{subfigure}[b]{0.23\textwidth}
  \centering
  
   \resizebox{\textwidth}{!}{
\includegraphics[width=\textwidth]{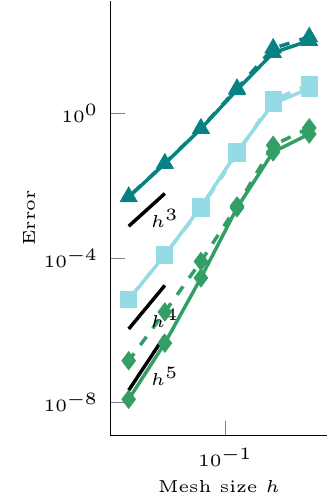}
    }
    
\subcaption{Ex.\ IV, $p = 4, r = 3$}
\end{subfigure}

\begin{subfigure}[b]{0.23\textwidth}
  \centering
  
   \resizebox{\textwidth}{!}{
\includegraphics[width=\textwidth]{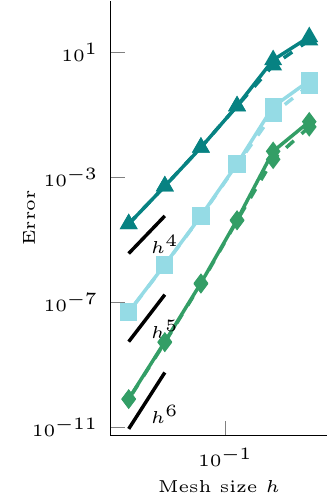}
    }
      
\subcaption{Ex.\ I, $p = 5, r = 4$} 
\end{subfigure}
\begin{subfigure}[b]{0.23\textwidth}
  \centering
  
   \resizebox{\textwidth}{!}{
\includegraphics[width=\textwidth]{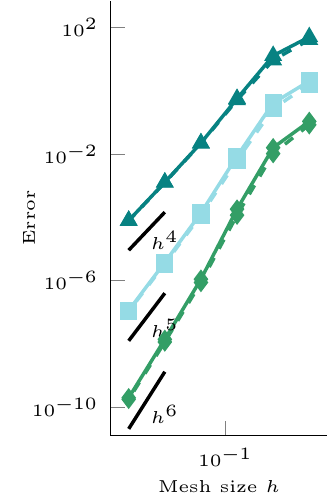}
    }
    
\subcaption{Ex.\ II, $p = 5, r = 4$}
\end{subfigure}
\begin{subfigure}[b]{0.23\textwidth}
  \centering
  
   \resizebox{\textwidth}{!}{
\includegraphics[width=\textwidth]{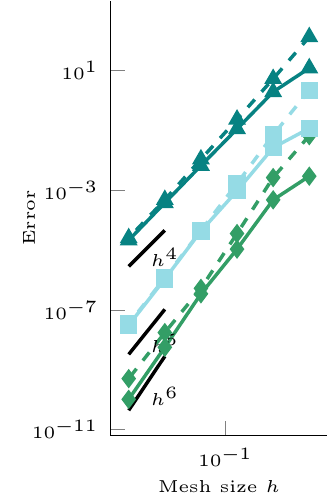}
    }
    
\subcaption{Ex.\ III, $p = 5, r = 4$}
\end{subfigure}
\begin{subfigure}[b]{0.23\textwidth}
  \centering
  
   \resizebox{\textwidth}{!}{
\includegraphics[width=\textwidth]{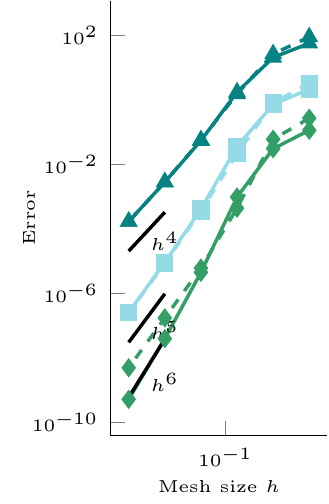}
    }
    
\subcaption{Ex.\ IV, $p = 5, r = 4$}
\end{subfigure}

\begin{center}
  \begin{subfigure}[t]{0.5\textwidth}
  \centering
  
   \resizebox{\textwidth}{!}{
   \includegraphics[width=\textwidth]{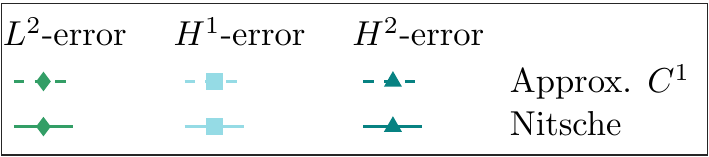}
    }

\end{subfigure}
\end{center}

\caption{Convergence rate with different polynomial degrees.} \label{fig:error_figure}
\end{figure} 

\subsection{Dependence on the stability parameter in Nitsche's method} \label{sec:stabilizationparameter}

In this subsection we examine the dependence of the error using Nitsche's method on the stability parameter. We assume that the parameter is chosen globally, i.e., the parameter is the same for each interface. Therefore, the bilinear form~$(\cdot, \cdot)_{C_h}$ changes to
\[
  (\varphi, \psi)_{C_h} = \frac{\eta}{h} \sum_{\kappa \in \mathcal{M}_I}  ( \jump{\partial_{\f n_{\kappa}} \varphi}_{\kappa}, \jump{\partial_{\f n_{\kappa}} \psi}_{\kappa} )_{L^2(I_{\kappa})}.
\]
In our study we vary the stability parameter over a range from $4h_0\cdot10^{-3}$ to $4h_0\cdot10^4$ and compute the errors for a fixed mesh size $h_0$. Figure 3 shows the errors in the $L^2$-, $H^1$- and $H^2$-norm for mesh size $h_0 = 1/2^6$ and different polynomial degrees. In comparison, we plot the error obtained with the approximate $C^1$ method with a dashed line, which is independent of the stability parameter. From the numerical results it is evident that Nitsche's method does not converge properly for large values of the stability parameter. The reason for this is that a too large parameter leads to an over-penalization of the jump of the normal derivative, which, in general, leads to locking of the solution. On the other hand, a too small stability parameter leads to instability. Also, we see a 'spike' occuring for a value of $\eta$ close to the constant $c_\kappa(h)$ in Lemma~\ref{lem:averageboundness}. A similar behavior with the occurence of 'spikes' is also observed in~\cite{embar2010imposing} where the authors use Nitsche's method for imposing Dirichlet boundary conditions.

\begin{figure}[h!]
\begin{subfigure}[b]{0.23\textwidth}
  \centering
  
   \resizebox{\textwidth}{!}{
\includegraphics[width=\textwidth]{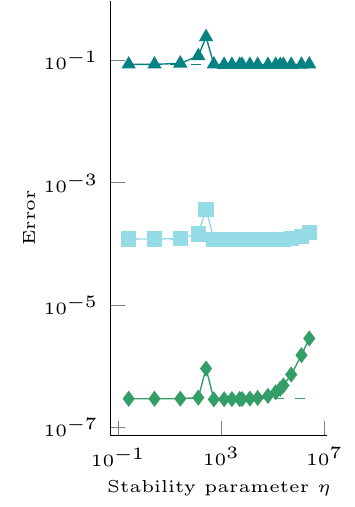}
    }
      
\subcaption{Ex.\ I, $p = 3, r = 2$} 
\end{subfigure}
\begin{subfigure}[b]{0.23\textwidth}
  \centering
  
   \resizebox{\textwidth}{!}{
\includegraphics[width=\textwidth]{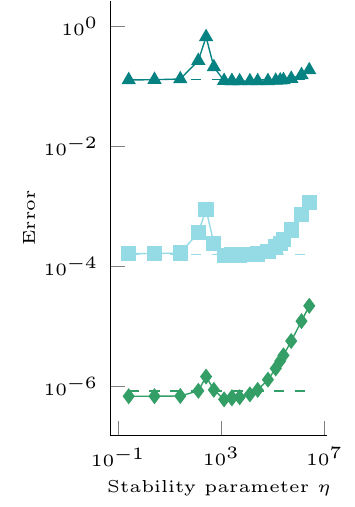}
    }
    
\subcaption{Ex.\ II, $p = 3, r = 2$}
\end{subfigure}
\begin{subfigure}[b]{0.23\textwidth}
  \centering
  
   \resizebox{\textwidth}{!}{
\includegraphics[width=\textwidth]{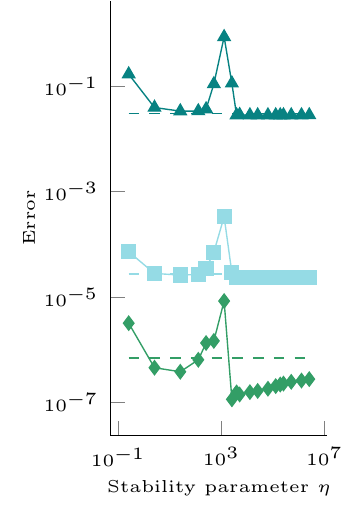}
    }
    
\subcaption{Ex.\ III, $p = 3, r = 2$}
\end{subfigure}
\begin{subfigure}[b]{0.23\textwidth}
  \centering
  
   \resizebox{\textwidth}{!}{
\includegraphics[width=\textwidth]{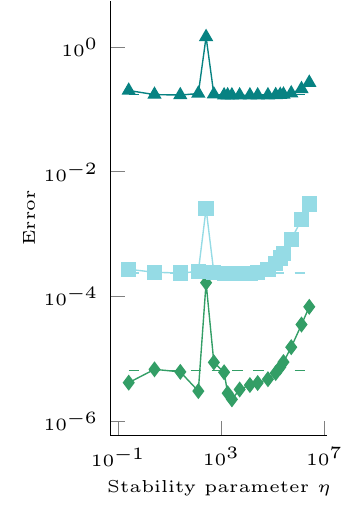}
    }
    
\subcaption{Ex.\ IV, $p = 3, r = 2$}
\end{subfigure}

\begin{subfigure}[b]{0.23\textwidth}
  \centering
  
   \resizebox{\textwidth}{!}{
\includegraphics[width=\textwidth]{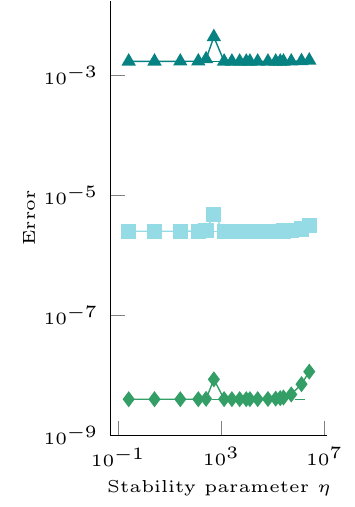}
    }
      
\subcaption{Ex.\ I, $p = 4, r = 3$} 
\end{subfigure}
\begin{subfigure}[b]{0.23\textwidth}
  \centering
  
   \resizebox{\textwidth}{!}{
\includegraphics[width=\textwidth]{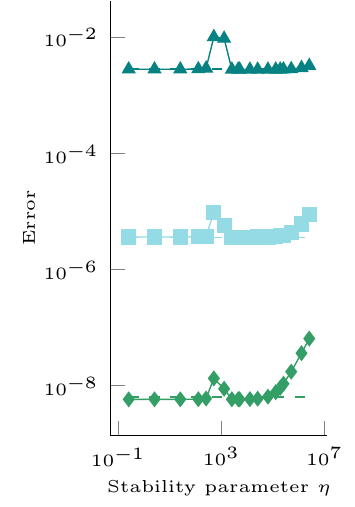}
    }
    
\subcaption{Ex.\ II, $p = 4, r = 3$}
\end{subfigure}
\begin{subfigure}[b]{0.23\textwidth}
  \centering
  
   \resizebox{\textwidth}{!}{
\includegraphics[width=\textwidth]{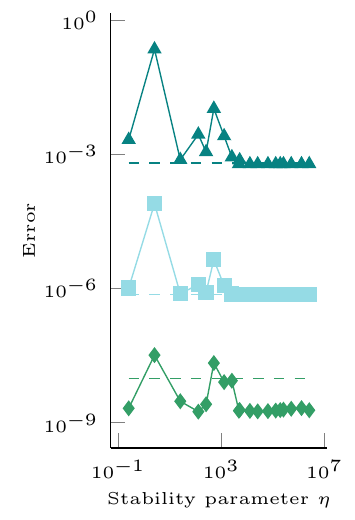}
    }
    
\subcaption{Ex.\ III, $p = 4, r = 3$}
\end{subfigure}
\begin{subfigure}[b]{0.23\textwidth}
  \centering
  
   \resizebox{\textwidth}{!}{
\includegraphics[width=\textwidth]{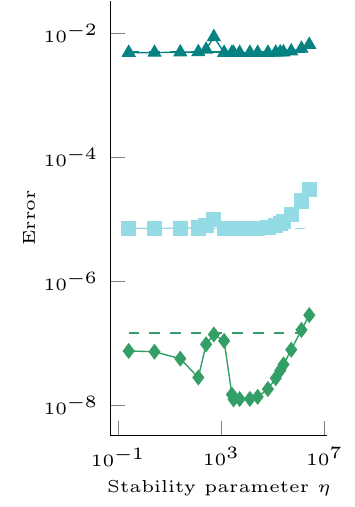}
    }
    
\subcaption{Ex.\ IV, $p = 4, r = 3$}
\end{subfigure}

\begin{subfigure}[b]{0.23\textwidth}
  \centering
  
   \resizebox{\textwidth}{!}{
\includegraphics[width=\textwidth]{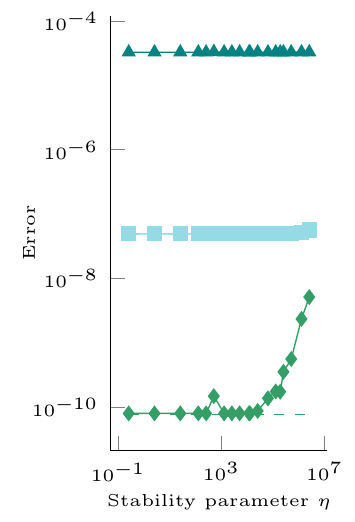}
    }
      
\subcaption{Ex.\ I, $p = 5, r = 4$} 
\end{subfigure}
\begin{subfigure}[b]{0.23\textwidth}
  \centering
  
   \resizebox{\textwidth}{!}{
\includegraphics[width=\textwidth]{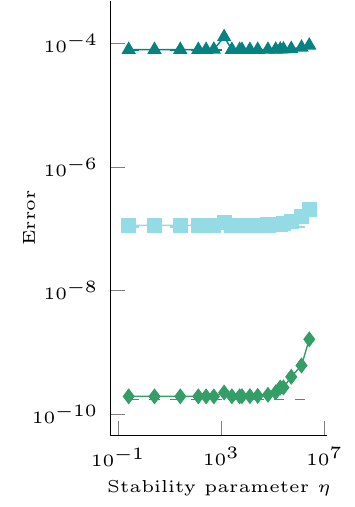}
    }
    
\subcaption{Ex.\ II, $p = 5, r = 4$}
\end{subfigure}
\begin{subfigure}[b]{0.23\textwidth}
  \centering
  
   \resizebox{\textwidth}{!}{
\includegraphics[width=\textwidth]{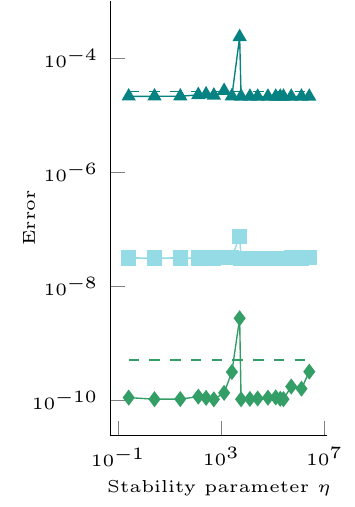}
    }
    
\subcaption{Ex.\ III, $p = 5, r = 4$}
\end{subfigure}
\begin{subfigure}[b]{0.23\textwidth}
  \centering
  
   \resizebox{\textwidth}{!}{
\includegraphics[width=\textwidth]{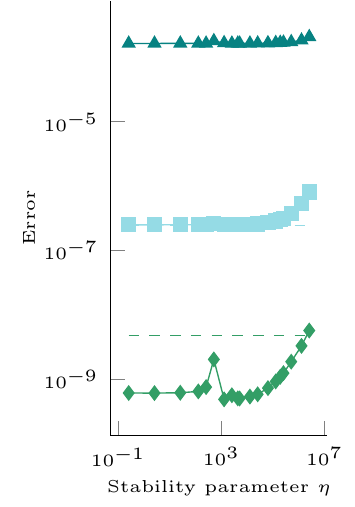}
    }
    
\subcaption{Ex.\ IV, $p = 5, r = 4$}
\end{subfigure}

\begin{center}
  \begin{subfigure}[t]{0.5\textwidth}
  \centering
  
   \resizebox{\textwidth}{!}{
\includegraphics[width=\textwidth]{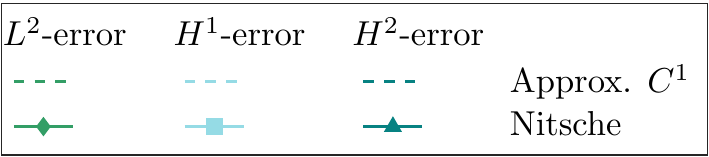}
    }

\end{subfigure}
\end{center}

\caption{The error for different stability parameters at fixed mesh size $h = 1/2^6$.}
\end{figure}  

\FloatBarrier

\subsection{Comparing an AS-$G^1$ reparametrization with the approximate $C^1$ discretization} \label{sec:reparam-v-approx}

In the last example, we follow the reparametrization strategy as in~\cite{kapl2018construction}. Here, a non AS-$G^1$ geometry is reparametrized into an AS-$G^1$ geometry. The consequence is that although the geometry and the discretization spaces can be constructed with exactly $C^1$-smooth basis functions as in~\cite{kapl2018construction,kapl2019argyris}, the geometry may change as a result of the reparametrization. We choose the geometry of Example~IV and reparametrized the non-AS-$G^1$ geometry. Figure~\ref{fig:asVsNonas} shows the results. There, the differences between the non-AS-$G^1$ geometry and the AS-$G^1$ geometry are shown represented in black (original) and red lines (reparametrized). For both methods we chose the maximum possible regularity. That is, we select $r = p - 1$ for the approximate~$C^1$ method and $r = p - 2$ for the AS-$G^1$ discretization -- which is a necessary restriction derived from the construction, cf.~\cite{collin2016analysis}. If $r = p - 1$ then the $C^1$-smooth subspace $\mathcal{X}_h \cap C^1(\Omega)$ reduces in general to global polynomials when restricted to one interface, which results in locking of the numerical solution. In Table~\ref{tab:asVsNonas}, for fixed mesh size $h=1/2^5$, the number of dofs and the errors are given. There we see that both methods yield very similar errors. However, the major difference between both methods is the number of dofs, which can be explained by the different regularities: since the AS-$G^1$ discretization requires $r=p-2$, the approximate $C^1$ method with $r=p-1$ needs much fewer dofs to obtain the same error levels.

\begin{figure}[hp!]
  \centering
  \definecolor{red}{HTML}{FF0000}
   \resizebox{0.5\textwidth}{!}{
\includegraphics[width=\textwidth]{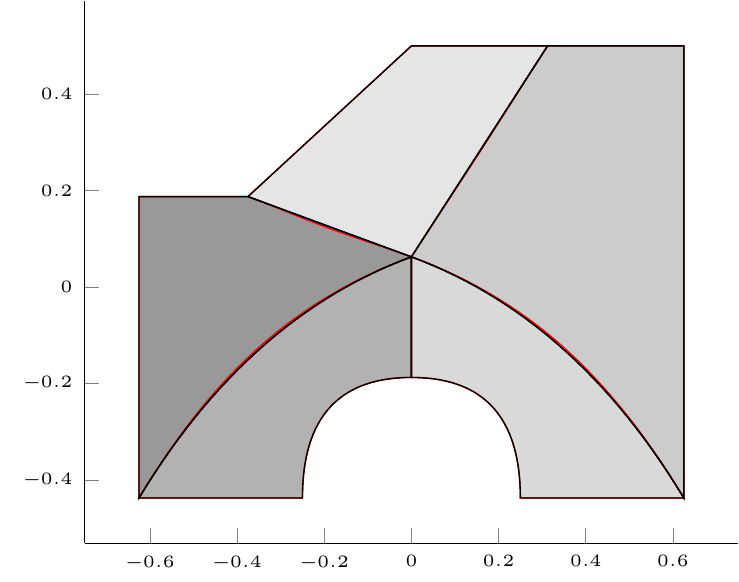}
    }
      
\caption{Non-AS-$G^1$ geometry (black) vs. AS-$G^1$ reparametrization (red)} \label{fig:asVsNonas}
\end{figure}

\begin{table}[h!]
\centering
  \begin{tabular}{c|c|c|c|c|c}
        & $h$ & \# dofs & $L^2$-error & $H^1$-error & $H^2$-error \\ \hline
   AS-$G^1$ geometry $p = 3$, $r = 1$ & 0.03125 & 19835 & 8.24069e-05 & 2.21436e-03 & 7.94010e-01 \\
   non-AS-$G^1$ geometry $p = 3$, $r = 2$ & 0.03125 & 5420 & 6.67878e-05 & 2.05893e-03 & 7.34844e-01 \\ \hline
      AS-$G^1$ geometry $p = 4$, $r = 2$ & 0.03125 & 21135 & 3.37653e-06 & 1.17475e-04 & 4.60163e-02 \\
   non AS-$G^1$ geometry $p = 4$, $r = 3$ & 0.03125 & 5755 & 3.19555e-06 & 1.20678e-04 & 4.11154e-02 \\ \hline
      AS-$G^1$ geometry $p = 5$, $r = 3$ & 0.03125 & 22475 & 1.65998e-07 & 7.97048e-06 & 3.19901e-03 \\
   non AS-$G^1$ geometry $p = 5$, $r = 4$ & 0.03125 & 6100 & 1.69152e-07 & 8.25093e-06 & 2.76540e-03 \\ 
  \end{tabular}
  \caption{The dofs and errors for the AS-$G^1$ geometry and the non-AS-$G^1$ geometry.} \label{tab:asVsNonas}
\end{table}

\section{Conclusion and future work}  \label{sec:conclusion}

We extend the basic construction from~\cite{WEINMULLER2021114017} to general multi-patch domains. Therefore, we introduce a construction for basis functions  around vertices using interpolation of functions that are approximately $C^1$-smooth across interfaces. Three different kinds of spaces are created in the construction: the patch interior spaces, the edge (interface and boundary) spaces and the corner spaces (both for boundary vertices and inner vertices), which are derived from the topology of the multi-patch geometry. This creates spaces that locally possess higher polynomial degrees and lower regularity, with the exception of the patch interior space, which is a standard isogeometric space. As a result we get non-nested spaces. In contrast to discretization spaces over AS-$G^1$ parametrizations, as in~\cite{kapl2019argyris}, which require $r \leq p - 2$, the approximate $C^1$ method also allows us to choose spline spaces of maximum regularity $r=p-1$.

Moreover, we compare the approximate $C^1$ method with Nitsche's method. In the numerical experiments we see that both methods converge optimally and the error values are almost the same. While one has to determined a suitable stability parameter for Nitsche's method, no such tuning is needed for the approximate $C^1$ method. Thus, to summarize, the approximate $C^1$ method provides an explicit and simple to implement alternative to weak ($H^2$-nonconforming) and exact ($H^2$-conforming) methods to solve fourth order problems, exemplified on a biharmonic model problem. The advantages of the approximate $C^1$ method are that the method can be applied on any $C^0$-conforming multi-patch parametrization and does not depend on any non-trivial parameter choices.

In the future we want to study several aspects of the method, such as convergence and stability properties and extensions that result in nested spaces. This would allow an adaptive construction with THB-splines, following the work as in~\cite{bracco2020isogeometric,bracco2021c1}. Moreover, we want to extend the construction to $C^0$-non-conforming (non-matching) interfaces and to surface domains. In such a context, the approximate $C^1$ method could be a viable option to discretize Kirchhoff--Love shell problems. Another possible direction of research is the extension to volumetric domains, where $C^1$-smooth discretizations, in general, yield suboptimal convergence rates, cf.~\cite{kapl2021c}. Since the approximate $C^1$ method has no interface integrals, it would be interesting to combine the method with a multigrid solver, see~\cite{sogn2019robust,sogn2021multigrid}.

\section*{Acknowledgments}

Both authors are supported by the Austrian Science Fund (FWF) and the government of Upper Austria through the project P~30926-NBL entitled ``Weak and approximate $C^1$ smoothness in isogeometric analysis''. Moreover, Thomas Takacs is partially supported by the Linz Institute of Technology (LIT) and the government of Upper Austria through the project LIT-2019-8-SEE-116 entitled ``PARTITION – PDE-aware isogeometric discretization based on neural networks''. All support is gratefully acknowledged.

\appendix

\section{The $C^2$ interpolation at the vertex}\label{sec:appendix-C2-interpolation}

In the following, we summarize the $C^2$ interpolation from~\cite{kapl2019argyris,hna2021}. For simplicity of the notation, we assume that the vertex is at $\boldsymbol{x} = V^{(k)}_1 = \f F^{(k)} (0,0)$. We repeat again the three sets of basis functions
\[
 \widehat{\mathcal{B}}_{b.e.} = \{ f_1^{(k)} [b^+_1,0],f_1^{(k)} [b^+_2,0],f_1^{(k)} [b^+_3,0],f_1^{(k)} [0,b^-_1],f_1^{(k)} [0,b^-_2], {\f b}^{(k)}_{(1,3)} \},
\]
corresponding to the bottom edge,
\[
 \widehat{\mathcal{B}}_{l.e.} = \{ f_4^{(k)} [b^+_1,0],f_4^{(k)} [b^+_2,0],f_4^{(k)} [b^+_3,0],f_4^{(k)} [0,b^-_1],f_4^{(k)} [0,b^-_2], {\f b}^{(k)}_{(3,1)} \},
\]
corresponding to the left edge, as well as 
\[
 \widehat{\mathcal{B}}_{c.t.} = \{ {\f b}^{(k)}_{(1,1)},{\f b}^{(k)}_{(1,2)},{\f b}^{(k)}_{(1,3)},{\f b}^{(k)}_{(2,1)},{\f b}^{(k)}_{(2,2)}, {\f b}^{(k)}_{(3,1)} \}.
\]
Then we predefine the projection operator $\Pi_1^{(k)} : C^2 (\boldsymbol{x}) \to {\mathcal{A}}_{V,s}^{(k)}$ such that for all $\varphi \in C^2 (\boldsymbol{x})$ it holds
\begin{align*}
  \Pi_1^{(k)} \varphi (\boldsymbol{x}) &= \varphi  (\boldsymbol{x}), \\
  \nabla_{\boldsymbol{x}} \Pi_1^{(k)} \varphi (\boldsymbol{x}) &= \nabla_{\boldsymbol{x}} \varphi (\boldsymbol{x}), \\
  \text{Hess} ( \Pi_1^{(k)} \varphi (\boldsymbol{x}) ) &= \text{Hess} ( \varphi (\boldsymbol{x}) ).
\end{align*}
We get the projection operator $\Pi_1^{(k)}$ by introducing the $C^2$ interpolation in the physical domain for the three spaces $\mathcal{B}_{b.e.}$, $\mathcal{B}_{l.e.}$ and $\mathcal{B}_{c.t.}$ denoted by $\Pi_{b.e.}$, $\Pi_{l.e.}$ and $\Pi_{c.t.}$, respectively. Then we add the first two interpolations and subtract the third to obtain the six basis functions for the vertex space ${\mathcal{A}}_{V,s}^{(k)}$.

We introduce the $C^2$ interpolation for the bottom edge and obtain the unique projector $\Pi_{b.e.} : C^2 (\boldsymbol{x}) \to \mbox{span}\{\mathcal{B}_{b.e.}\}$ which satisfies
\begin{align*}
  \Pi_{b.e.} \varphi (\boldsymbol{x}) &= \varphi  (\boldsymbol{x}), \\
  \nabla_{\boldsymbol{x}} \Pi_{b.e.} \varphi (\boldsymbol{x}) &= \nabla_{\boldsymbol{x}} \varphi (\boldsymbol{x}), \\
  \text{Hess} ( \Pi_{b.e.} \varphi (\boldsymbol{x}) ) &= \text{Hess} ( \varphi (\boldsymbol{x}) ).
\end{align*}
The projection operators $\Pi_{l.e.} : C^2 (\boldsymbol{x}) \to \mbox{span}\{\mathcal{B}_{l.e.}\}$ and $\Pi_{c.t.} : C^2 (\boldsymbol{x}) \to \mbox{span}\{\mathcal{B}_{c.t.}\}$ are defined analogously.
Then the operator $\Pi_1^{(k)}$ is defined as
\begin{align*}
 \Pi_1^{(k)} = \Pi_{b.e.} + \Pi_{l.e.} - \Pi_{c.t.}
\end{align*}
which concludes the $C^2$ interpolation. Thus, the space $\widehat{\mathcal{A}}_{V,s}^{(k)}$ is defined implicitly through the interpolation. In~\cite{kapl2019argyris,hna2021}, an explicit formula of the vertex basis functions is stated.

\end{document}